\newtheorem{thm}{Theorem}[section]
\newtheorem{lem}[thm]{Lemma}
\newtheorem{prop}[thm]{Proposition}
\newtheorem{cor}[thm]{Corollary}
\theoremstyle{definition}
\newtheorem{dfn}[thm]{Definition}
\newtheorem{ex}[thm]{Example}
\newtheorem{rmk}[thm]{Remark}
\theoremstyle{remark}
\newtheorem*{ac}{Acknowledgments}
\numberwithin{equation}{thm}
\def\Add{\operatorname{Add}}
\def\C{\mathrm{C}}
\def\Cok{\operatorname{Coker}}
\def\Cone{\operatorname{Cone}}
\def\Ext{\operatorname{Ext}}
\def\Gdim{\operatorname{G-dim}}
\def\ge{\geqslant}
\def\gp{\operatorname{GP}}
\def\H{\mathrm{H}}
\def\HL{\operatorname{H}}
\def\Hom{\operatorname{Hom}}
\def\Id{\operatorname{Id}}
\def\image{\operatorname{Im}}
\def\K{\mathcal{K}}
\def\Ker{\operatorname{Ker}}
\def\le{\leqslant}
\def\lhom{\operatorname{\underline{Hom}}}
\def\Mod{\operatorname{Mod}}
\def\mod{\operatorname{mod}}
\def\pd{\operatorname{pd}}
\def\proj{\operatorname{proj}}
\def\syz{\Omega}
\def\t{\mathrm{T}}
\def\Tor{\operatorname{Tor}}
\def\tr{\operatorname{Tr}}
\def\X{\mathcal{X}}
\def\ZZ{\mathbb{Z}}
\begin{document}
\allowdisplaybreaks
\title[On the Auslander--Bridger--Yoshino theory for complexes]{On the Auslander--Bridger--Yoshino theory for complexes of \\finitely generated projective modules}
\author{Yuya Otake}
\address{Graduate School of Mathematics, Nagoya University, Furocho, Chikusaku, Nagoya 464-8602, Japan}
\email{m21012v@math.nagoya-u.ac.jp}

\thanks{2020 {\em Mathematics Subject Classification.} 16E05, 16E10, 13D02.}
\thanks{{\em Key words and phrases.} homotopy category, stable category, syzygy, cosyzygy, $n$-torsionfree module, Gorenstein projective module, Gorenstein dimension.}
\thanks{The author was partly supported by Grant-in-Aid for JSPS Fellows 23KJ1119.}
\begin{abstract}
Let $R$ be a two-sided noetherian ring. Auslander and Bridger developed a theory of projective stabilization of the category of finitely generated $R$-modules, which is called the stable module theory. Recently, Yoshino established a stable ``complex'' theory, i.e., a theory of a certain stabilization of the homotopy category of complexes of finitely generated projective $R$-modules. We introduce higher versions of several notions introduced by Yoshino, such as ${}^\ast$torsionfreeness and ${}^\ast$reflexivity. 
Also, we prove the Auslander--Bridger approximation theorem for complexes of finitely generated projective $R$-modules.
\end{abstract}
\maketitle
\tableofcontents
\section{Introduction}
Let $R$ be a two-sided noetherian ring.
Denote by $\mod R$ the category of finitely generated (right) $R$-modules and by $\proj R$ the full subcategory of $\mod R$ consisting of finitely generated projective $R$-modules.
Auslander and Bridger \cite{AB} established a theory of the stable category $\underline{\mod R} $, which is the ideal quotient of $\mod R$ by $\proj R$.
The main subject of the stable module theory is the behavior of the syzygy functor $\syz : \underline{\mod R}\to\underline{\mod R}$ and the transpose functor $\tr : \underline{\mod R}\to \underline{\mod R^{\mathrm{op}}}$.
By exploring these functors, Auslander and Bridger introduced the notions of $n$-torsionfree modules and Gorenstein dimension, and studied them deeply.
Recently, Yoshino \cite{Yos} found out that an analogous theory of the Auslander--Bridger theory holds for a certain stabilization of the homotopy category of complexes of finitely generated projective $R$-modules.
We denote by $\K(\proj R)$ the homotopy category of complexes of finitely generated projective $R$-modules and by $\Add R$ the smallest full subcategory of $\K(\proj R)$ that contains $R$ and is closed under shifts and direct summands, and possibly infinite coproducts.
Yoshino's theory, which is called the stable complex theory, is about the ideal quotient $\underline{\K(\proj R)}$ of $\K(\proj R)$ by $\Add R$.
For an object $X$ of $\K(\proj R)$ the syzygy $\syz X$ of $X$ is defined as the cocone of a right $\Add R$-approximation $p : P\to X$ of $X$.
So there exists a triangle
$$
\syz X\to P\to X \to \syz X[1]
$$
in $\K(\proj R)$.
Also, the $R$-dual functor $(-)^\ast =\Hom_R(-,R):\K(\proj R)\to\K(\proj R^{\mathrm{op}})$ plays a role of the transpose functor, and the cosyzygy functor is defined as the composition $\syz^{-}=(-)^\ast \circ \syz \circ (-)^\ast : \K(\proj R)\to\K(\proj R)$.
Yoshino showed that the pair $(\syz^{-1}, \syz)$ of functors from $\K(\proj R)$ to itself is an adjoint pair; there is a functorial isomorphism
$$
\Hom_{\underline{\K(\proj R)}}(\Omega^- X, Y)\cong\Hom_{\underline{\K(\proj R)}}(X, \Omega Y)
$$
for all $X, Y\in\underline{\K(\proj R)}$.

An important tool in the stable module theory of Auslander and Bridger is the general theory of functors between abelian categories developed in \cite{AusC} and \cite[Chapter 1]{AB}.
In particular, for a finitely generated $R$-module $M$ and its evaluation map $\varphi_M : M\to M^{\ast\ast}$, the exact sequence called the {\em Auslander--Bridger sequence}
\begin{equation}\label{clasABseq}
0\to \Ext^1_{R^{\mathrm{op}}}(\tr M, R)\to M\xrightarrow{\varphi_M}M^{\ast\ast}\to \Ext^2_{R^{\mathrm{op}}}(\tr M,R)\to 0
\end{equation}
plays an essential role.
Yoshino showed the following exact sequence for all complexes $X$ of finitely generated projective $R$-modules and all integers $i$:
\begin{equation}\label{YosABseq}
0\to\Ext^1_R(C^{i+1}(X),R)\to \H^{-i}(X^\ast)\xrightarrow{\rho^i_X} \H^i(X)^\ast \to\Ext^2_R(C^{i+1}(X),R),
\end{equation}
where the $R^{\mathrm{op}}$-homomorphism $\rho^i_X : \H^{-i}(X^\ast)\to \H^i(X)^\ast $ is defined by $\rho^i_X(\overline{f})(\overline{x})=f(x)$ for $\overline{f}\in \H^{-i}(X^\ast)$ and $x\in \H^i(X)$, $C^{i+1}(X)$ is the cokernel of the $i$th differential map of $X$.
This exact sequence partially recovers the Auslander--Bridger sequence (\ref{clasABseq}).
In terms of the homomorphism $\rho^i_X$, ${}^\ast$torsionfreeness and ${}^\ast$reflexivity are defined for objects of $\K(\proj R)$; an object $X$ of $\K(\proj R)$ is called {\em ${}^\ast$torsionfree} if the homomorphism $\rho^i_X : \H^{-i}(X^\ast)\to \H^i(X)^\ast $ is injective, and called {\em ${}^\ast$reflexive} if  $\rho^i_X$ is bijective.
Yoshino studied ${}^\ast$torsionfreeness and ${}^\ast$reflexivity of syzygies and cosyzygies.

The purpose of this paper is to give refinements and generalizations of Yoshino's results.
First, we construct a long exact sequence of Auslander--Bridger type for right or left exact functors between abelian categories.
As a special case of the long exact sequence of Auslander--Bridger type, we complement the exact sequence (\ref{YosABseq}).
Based on this exact sequence, for each integer $n$ we introduce $n$-torsionfreeness for objects of $\K(\proj R)$.
This notion is a higher version of the dual concepts of ${}^\ast$torsionfreeness and ${}^\ast$reflexivity for objects of $\K(\proj R)$.
Auslander and Bridger studied when syzygy modules become $n$-torsionfree.
Thus, it is a natural question to ask when syzygies of objects of $\K(\proj R)$ become $n$-torsionfree.
The key in stating our results is the concept of cohomologically ghost triangles: a triangle $X\to Y\to Z\to X[1]$ in $\K(\proj R)$ is called {\em cohomologically ghost} if the sequence $0\to \H^i(X)\to \H^i(Y)\to \H^i(Z)\to0$ of cohomologies of complexes is exact for all integers $i$.
In our framework, cohomologically ghost triangles play the role of short exact sequences in the module category $\mod R$.
Using this terminology, we can state our first main result.
The equivalences $(1)\Leftrightarrow(2)$ and $(1)\Leftrightarrow(3)$ of the following theorem are analogues of \cite[Theorem 2.17]{AB} and \cite[Proposition 2.21]{AB}, respectively.

\begin{thm}[Theorems \ref{ntrfeq} and \ref{ABapp}]\label{yosintro1}
Let $X$ be an object of $\K(\proj R)$ and $n\ge0$ an integer.
Then the following are equivalent.
\begin{enumerate}[\rm(1)]
   \item
   The syzygy $\syz^n X$ is $n$-torsionfree.
   \item
   For each $0\le i\le n-1$, there exists a cohomologically ghost triangle $X_i\to P_i\to X_{i+1}\to X_i[1]$ in $\K(\proj R)$ with $P_i\in\Add R$ whose $R$-dual $X_{i+1}^\ast\to P_i^\ast\to X_i^\ast\to X_{i+1}^\ast[1]$ is also cohomologically ghost, such that $\syz^n X\cong X_0$ in $\K(\proj R)$.
   \item
There exists a cohomologically ghost triangle $Y\to G\to X\to Y[1]$ in $\K(\proj R)$ such that $G^\ast$ is $n$-torsionfree and $Y$ has projective dimension at most $n-1$.
\end{enumerate}
\end{thm}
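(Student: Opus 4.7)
The plan is to prove the two equivalences (1) $\Leftrightarrow$ (2) and (1) $\Leftrightarrow$ (3) separately, using the long exact sequence of Auslander--Bridger type (introduced earlier in the paper) as the main computational engine. The guiding heuristic throughout is that a cohomologically ghost triangle $X_i \to P_i \to X_{i+1} \to X_i[1]$ with $P_i \in \Add R$ whose $R$-dual is again cohomologically ghost plays the role of a short exact sequence $0 \to \syz N \to P \to N \to 0$ arising from a projective presentation whose $R$-dual remains exact; this is precisely the building block of the classical arguments underlying \cite[Theorem 2.17]{AB} and \cite[Proposition 2.21]{AB}.

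For (1) $\Leftrightarrow$ (2), I would argue by induction on $n$, with the base case $n = 0$ being vacuous. For (1) $\Rightarrow$ (2), take a right $\Add R$-approximation $P_{n-1} \to X$, producing the triangle $\syz X \to P_{n-1} \to X \to \syz X[1]$. The cohomological ghost property of this triangle should follow from the definition of the syzygy, while the ghost property of its $R$-dual uses the $n$-torsionfreeness of $\syz^n X$: via the long exact Auslander--Bridger sequence applied at an appropriate level, it forces the connecting maps in the dualized cohomology sequence to vanish. Since $\syz^{n-1}(\syz X) \cong \syz^n X$ is still $(n-1)$-torsionfree, the inductive hypothesis supplies the remaining triangles. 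For (2) $\Rightarrow$ (1), I would cascade the cohomological short exact sequences supplied by the $n$ ghost triangles and their duals through the long exact Auslander--Bridger sequence applied to each $X_i$, accumulating the $\Ext$-vanishing conditions that characterize $n$-torsionfreeness for $X_0 \cong \syz^n X$.

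For (1) $\Leftrightarrow$ (3), the direction (3) $\Rightarrow$ (1) is obtained by applying $\syz^n$ to the triangle $Y \to G \to X$: since $\pd Y \le n-1$ we have $\syz^n Y \cong 0$ in $\underline{\K(\proj R)}$, and the cohomological ghost property guarantees $\syz^n X \cong \syz^n G$ compatibly with cohomology; the hypothesis that $G^\ast$ is $n$-torsionfree then transfers through the adjunction $(\syz^{-1}, \syz)$ and the duality definition of the cosyzygy to yield that $\syz^n G$ is $n$-torsionfree. For (1) $\Rightarrow$ (3), I would use the triangles of (2) to assemble a single cohomologically ghost triangle $Y \to G \to X$, where $G$ is constructed as a cosyzygy-type object whose $R$-dual is $n$-torsionfree (built from $X_0 \cong \syz^n X$ via iterated application of $\syz^-$), and $Y$ is assembled from $P_0, \ldots, P_{n-1}$ so that $\pd Y \le n-1$ is forced by the finite resolution structure.

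The main obstacle, I expect, will be the verification that the cohomological ghost condition on both a triangle and its $R$-dual is preserved through the inductive step in (1) $\Leftrightarrow$ (2). Unlike the classical setting where short exact sequences behave uniformly degree-by-degree, here each triangle carries cohomology in every integer degree, and the dualized ghost condition intertwines with the map $\rho^i_X$ in a nontrivial way; ensuring that the long exact Auslander--Bridger sequence can be invoked consistently across all degrees, and that the resulting $\Ext$-vanishing statements propagate correctly, will require careful bookkeeping. A secondary technical hurdle in (1) $\Rightarrow$ (3) is the explicit assembly of the approximation: turning a sequence of $n$ ghost triangles into a single triangle with the prescribed bounds on $G^\ast$ and $\pd Y$ is the heart of the Auslander--Bridger approximation theorem in this complex-level setting.
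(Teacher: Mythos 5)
There is a genuine gap, and it starts with a misreading of condition (2). The triangles $X_i\to P_i\to X_{i+1}$ with $X_0\cong\syz^nX$ form a \emph{coresolution} of $\syz^nX$ by split complexes: each $X_{i+1}$ is the cone of a cohomologically injective map $X_i\to P_i$ (a cosyzygy-type object produced, as in Proposition \ref{trleq}, by dualizing a right $\Add R^{\mathrm{op}}$-approximation of $X_i^\ast$), and nothing identifies $X_n$ with $X$. You instead take the $X_i$ to be the intermediate syzygies $\syz^{n-i}X$ and the triangles to be the syzygy triangles $\syz^{n-i}X\to P\to\syz^{n-i-1}X$. This fails: the $R$-dual of the syzygy triangle $\syz X\to P\to X$ is cohomologically ghost precisely when $\H^{-\bullet}(X^\ast)\to\H^\bullet(P)^\ast$ is injective, which by the Auslander--Bridger sequence (Corollary \ref{fundex}(1)) amounts to $\Ext^1_R(C^\bullet(X),R)=0$ — a condition on $X$ itself that is not implied by (1). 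Indeed for $n=1$ condition (1) holds for \emph{every} $X$ (first syzygies are always torsionless by Proposition \ref{trleq}), yet your claimed ghost dual fails whenever some $C^i(X)$ is not torsionless. The correct route, which the paper follows, is to prove the abstract statement ``$W$ is $n$-torsionfree iff $W$ admits such a length-$n$ dual-ghost coresolution'' by induction on $n$ (base cases being Propositions \ref{trleq} and \ref{refeq}, the inductive step using the $\gamma$-diagram and Lemma \ref{ntrftriv} to show the first cosyzygy $X_1$ is $(n-1)$-torsionfree), and then apply it to $W=\syz^nX$.

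For $(1)\Leftrightarrow(3)$ your direction $(3)\Rightarrow(1)$ is essentially right (rotate the triangle $n$ times as in Lemma \ref{mawasu} to get $\syz^nX\approx\syz^nG$, then use that $G^\ast$ $n$-torsionfree means $\Ext^{1\le j\le n}_R(C^\bullet(G),R)=0$, whence $\syz^nC^\bullet(G)\approx C^\bullet(\syz^nG)$ is $n$-torsionfree by the classical Auslander--Bridger result), although the mechanism is this cokernel-level computation rather than the adjunction. But $(1)\Rightarrow(3)$ — the approximation theorem itself — is only gestured at. The paper's proof is an induction on $0\le i\le n-1$ producing two families of cohomologically ghost triangles $Y_i\to\syz^{-(i+1)}\syz^nX\xrightarrow{\psi^{i+1,n}_X}\syz^{n-(i+1)}X\to Y_i[1]$ and $Y_{i-1}\to P_i\to Y_i\to Y_{i-1}[1]$; the essential inputs are that each cosyzygy $\syz^{-i}\syz^nX$ is torsionless (so one can choose a cohomologically \emph{injective} left $\Add R$-approximation $q_i$), an octahedron at the base step, and the nine lemma to splice $q_i$ with a right approximation of $\syz^{n-(i+1)}X$ at each subsequent step. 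Taking $G=\syz^{-n}\syz^nX$ and $Y=Y_{n-1}$ then gives the approximation. None of these ingredients — in particular the role of the counit morphisms $\psi^{i,n}_X$ and the torsionlessness of the intermediate cosyzygies, which is what makes the construction possible at all — appears in the proposal, so the heart of the theorem remains unproved.
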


Here, projective dimension for objects of $\K(\proj R)$ is defined as the infimum of lengths of right $\Add R$-resolutions, as an analogue of usual projective dimension for modules; see Definition \ref{defGdim} for details.

The notions of Gorenstein projective modules and Gorenstein dimension were introduced by Auslander and Bridger \cite{AB}.
A finitely generated $R$-module $M$ is called {\em Gorenstein projective} if both $M$ and $\tr M$ are $\infty$-torsionfree.
As an analogy, we introduce Gorenstein projectivity for objects of $\K(\proj R)$, that is, an object $X$ of $\K(\proj R)$ is called {\em Gorenstein projective} if both $X$ and $X^\ast$ are $\infty$-torsionfree in our sense.
Also, Gorenstein dimension for objects of $\K(\proj R)$ is defined in the same way as projective dimension.
Here, it must be noted that projective dimension and Gorenstein dimension introduced in this paper are quite different from those defined for homologically bounded complexes by Foxby \cite{Fox1} and Yassemi \cite{Yassemi}; see Example \ref{ntrfex}.
The next theorem is the main result for Gorenstein projective complexes and Gorenstein dimension in our sense.

\begin{thm}[Theorems \ref{Gdimca}, \ref{SWet} and Corollary \ref{AusBuchapp}]\label{yosintro2}
Let $X$ be an object of $\K(\proj R)$.
Then the following hold.
\begin{enumerate}[\rm(1)]
   \item
   The following are equivalent.
   \begin{itemize}
   \item[(1-a)] The complex $X$ is Gorenstein projective.
   \item[(1-b)] For each integer $t$, there exists a cohomologically ghost triangle $X_{t+1}\to P_t\to X_t\to X_{t+1}[1]$ in $\K(\proj R)$ with $P_t\in\Add R$ whose $R$-dual $X_{t}^\ast\to P_t^\ast\to X_{t+1}^\ast\to X_{t}^\ast[1]$ is also cohomologically ghost, such that $X\cong X_0$ in $\K(\proj R)$.
   \item[(1-c)] For each integer $t$, there exists a cohomologically ghost triangle $X_{t+1}\to G_t\to X_t\to X_{t+1}[1]$ in $\K(\proj R)$ with $G_t$ Gorenstein projective whose $R$-dual $X_{t}^\ast\to P_t^\ast\to X_{t+1}^\ast\to X_{t}^\ast[1]$ is also cohomologically ghost, such that $X\cong X_0$ in $\K(\proj R)$.
   \end{itemize}
   \item
   The following are equivalent.
   \begin{itemize}
   \item[(2-a)] The complex $X$ has finite Gorenstein dimension.
   \item[(2-b)] There exists a positive integer $n$ such that the module $C^i(X)$ has Gorenstein dimension at most $n$ for all integers $i$.
   \item[(2-c)] There exists a cohomologically ghost triangle $Y\to G\to X\to Y[1]$ in $\K(\proj R)$ such that $Y$ has finite projective dimension and $G$ is Gorenstein projective.
   \end{itemize}
\end{enumerate}
\end{thm}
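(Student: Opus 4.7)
The plan is to reduce Part (1) to iterative applications of Theorem \ref{yosintro1} combined with the duality symmetry between $X$ and $X^\ast$, and then to reduce Part (2) to Part (1) via a splicing argument. Throughout I would exploit the fact that cohomologically ghost triangles in $\K(\proj R)$ play the role of short exact sequences in $\mod R$, so that the classical Auslander--Bridger arguments transfer once Theorem \ref{yosintro1} is in hand.

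For Part (1), I would first address (1-a)$\Rightarrow$(1-b). Since $X$ is ${}^\ast$torsionfree of every order, $\syz^n X$ is $n$-torsionfree for all $n\ge 0$, so Theorem \ref{yosintro1} (1)$\Rightarrow$(2) applied at each stage produces cohomologically ghost triangles $X_{t+1}\to P_t\to X_t\to X_{t+1}[1]$ with $P_t\in\Add R$ and $R$-dual again cohomologically ghost, for every $t\ge 0$. The half with $t<0$ I would build by running the same construction on $X^\ast$ and $R$-dualizing, which is legitimate precisely because the $R$-dual condition in (1-b) is symmetric between the two halves. Conversely, (1-b)$\Rightarrow$(1-a) follows from Theorem \ref{yosintro1} (2)$\Rightarrow$(1) applied to both directions: the positive half forces $X$ to be ${}^\ast$torsionfree of every order, while $R$-dualizing the negative half yields the same for $X^\ast$. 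The equivalence (1-b)$\Leftrightarrow$(1-c) is formal in the direction $\Add R\subseteq$ (Gorenstein projectives), and in the reverse direction I would replace each Gorenstein projective $G_t$ by a right $\Add R$-approximation $P_t\to G_t$ and use the octahedral axiom to re-assemble a cohomologically ghost triangle with $P_t$ in the middle.

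For Part (2), the implication (2-a)$\Rightarrow$(2-c) is immediate from the definition of Gorenstein dimension once one peels off the rightmost term of an optimal finite Gorenstein projective resolution. The converse (2-c)$\Rightarrow$(2-a) I would prove by splicing a finite $\Add R$-resolution of $Y$ with the given triangle and the complete resolution of $G$ furnished by (1-b); the resulting finite-length Gorenstein projective resolution exhibits $\Gdim X \le \pd Y + 1$. The equivalence (2-a)$\Leftrightarrow$(2-b) is the subtler one, as it relates complex-level and module-level data: here I would use shift-type relations between $C^i(X)$ and the cokernels of syzygies and cosyzygies of $X$, together with the exact sequence (\ref{YosABseq}), to translate vanishing of the $\Ext$ modules controlling $n$-torsionfreeness of $\syz^n X$ into vanishing of the $\Ext$ modules characterizing module-level Gorenstein dimension of the $C^i(X)$. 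I expect this translation to be the main obstacle, since matching a single complex-level bound against a uniform bound across all cokernels demands a careful two-sided use of the Auslander--Bridger long exact sequence in concert with the duality framework established in Part (1).
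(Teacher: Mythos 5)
Your Part (2) contains a genuine gap at the implication (2-a)$\Rightarrow$(2-c), which you dismiss as "immediate from the definition once one peels off the rightmost term of an optimal finite Gorenstein projective resolution." It is not: unfolding $\Gdim_R X\le n$ gives a first triangle $X_1\to G_0\to X\to X_1[1]$ in which $G_0$ is Gorenstein projective but $X_1$ only satisfies $\Gdim X_1\le n-1$; nothing forces $X_1$ to have finite \emph{projective} dimension. The implication (2-a)$\Rightarrow$(2-c) is exactly the Auslander--Buchweitz approximation, and in the paper it is deduced from the approximation theorem (Theorem \ref{ABapp}): since $\syz^n X$ is Gorenstein projective by Theorem \ref{Gdimca}, it is $n$-torsionfree, so Theorem \ref{ABapp} yields a cohomologically ghost triangle $Y\to G\to X\to Y[1]$ with $\pd Y\le n-1$ and $G^\ast$ $n$-torsionfree, and one then upgrades $G$ to Gorenstein projective by combining $\Ext^i(C^\bullet(G),R)=0$ for $1\le i\le n$ with $\Gdim C^\bullet(G)\le n$. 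The construction of that triangle is a nontrivial induction (counit morphisms $\syz^{-n}\syz^n X\to X$, octahedra, the nine lemma) and cannot be read off from the definition. Your (2-c)$\Rightarrow$(2-a) splicing and your route to (2-a)$\Leftrightarrow$(2-b) via the relation $C^i(\syz X)\approx\syz C^i(X)$ do match the paper's Theorem \ref{Gdimca} and Lemma \ref{Csyz}.

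In Part (1) there are two further weak points. First, a directional mismatch: Theorem \ref{yosintro1}(1)$\Rightarrow$(2) produces cosyzygy-direction triangles starting at $\syz^n X$ and terminating at a complex that is not identified with $X$, whereas the positive half of (1-b) requires syzygy-direction triangles starting at $X_0\cong X$. The paper instead takes the canonical triangles $\syz^{t+1}X\to F_t\to\syz^t X$ for all $t\in\ZZ$ and verifies that their duals are cohomologically ghost using the bijectivity of $\rho^\bullet_{\syz^t X}$, valid because every $\syz^t X$ is Gorenstein projective. (You also have the roles reversed: the syzygy-direction half controls $\infty$-torsionfreeness of $X^\ast$, the cosyzygy-direction half that of $X$.) Second, and more seriously, your (1-c)$\Rightarrow$(1-b) via "replace each $G_t$ by a right $\Add R$-approximation and reassemble by the octahedral axiom" conceals the real content: this is the complex analogue of the Sather--Wagstaff--Sharif--White theorem, and the naive replacement does not formally reassemble into a chain of the shape (1-b) because the syzygies of the $G_t$ accumulate. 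The paper avoids this by proving (1-c)$\Rightarrow$(1-a) directly: the ghost triangles and their duals force $\rho^\bullet_{X_t}$ to be bijective for every $t$, and dimension-shifting along the chain kills $\Ext^{>0}(\H^\bullet(X_t^\ast),R)$ and hence $\Ext^{>0}(C^\bullet(X_t),R)$, with a dual argument for $X_t^\ast$. You would need to supply an argument of comparable substance at this step.
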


The equivalence (1-a) $\Leftrightarrow $ (1-b) in the above theorem is an analogue of the well-known equivalence between having a complete resolution and being Gorenstein projective, and the equivalence (1-a) $\Leftrightarrow$ (1-c) is an analogue of \cite[Theorem A]{SSW} for finitely generated modules.
Also, the module version of the equivalence (2-a) $\Leftrightarrow$ (2-c) is none other than a consequence of Auslander--Buchweitz theory \cite{ABu}.

The organization of this paper is as follows.
In Section 2, we state several notions and their basic properties for later use.
In Section 3, we construct long exact sequences of Auslander--Bridger type for functors between general abelian categories and give some examples in the case of the module category.
In Section 4, we recall the basic properties of the full subcategory $\Add R$ of $\K(\proj R)$ and introduce torsionlessness and reflexivity for objects of $\K(\proj R)$.
In the final Section 5, we define $n$-torsionfreeness as a higher version of torsionlessness and reflexivity and prove Theorems \ref{yosintro1} and \ref{yosintro2}.

\section{Preliminaries}
Throughout this paper, let $R$ be a two-sided noetherian ring.
We denote by $\Mod R$ the category of (right) $R$-modules and by $\mod R$ the category of finitely generated (right) $R$-modules.
In this paper, we consider the stable complex theory, which is a complex version of the stable module theory developed by Auslander and Bridger \cite{AB}.
So we recall the notions about the stable category $\underline{\mod R}$ of $\mod R$, which play a central role in the Auslander--Bridger theory.

\begin{dfn}
\begin{enumerate}[\rm(1)]
   \item
   We denote by $\underline{\mod R}$ the {\em stable category} of $\mod R$.
   The objects of $\underline{\mod R}$ are the same as those of $\mod R$.
   The morphism set of objects $X, Y$ of $\underline{\mod R}$ is defined by
   $$
   \Hom_{\underline{\mod R}}(X,Y)
   =\lhom_R(X,Y)
   =\Hom_R(X,Y)/{\mathcal{P}(X,Y)},
   $$
   where $\mathcal{P}(X,Y)$ is the set of $R$-morphisms $X\to Y$ factoring through finitely generated projective modules.
   \item
   Let $M$ be a finitely generated $R$-module and $P_1\xrightarrow{d_1}P_0\xrightarrow{d_0}M\to0$ a finite projective presentation of $M$.
   The {\em syzygy} $\syz M$ of $M$ is defined as $\image d_1$.
   Note that $\syz M$ is uniquely determined by $M$ up to projective summands.
   Taking the syzygy induces an additive functor $\syz:\underline{\mod R}\to\underline{\mod R}$.
   Inductively, we define $\syz^n=\syz\circ\syz^{n-1}$ for an integer $n>0$.
   The {\em (Auslander) transpose} $\tr M$ of $M$ is defined as $\Cok d_1^\ast$.
   Note that $\tr M$ is uniquely determined by $M$ up to projective summands.
   Taking the transpose induces an additive functor $\tr:\underline{\mod R}\to\underline{\mod R^{\mathrm{op}}}$.
   \item
   A finitely generated $R$-module $M$ is called {\em $n$-torsionfree} if $\Ext^i_{R^{\mathrm{op}}}(\tr M,R)=0$ for all $1\le i\le n$.
   Also, we say that a finitely generated $R$-module $M$ is {\em Gorenstein projective} if $M$ and $\tr$ are $n$-torsionfree for all $n\ge 1$, that is, $\Ext^n_R(M,R)=0$ and $\Ext^n_{R^{\mathrm{op}}}(\tr M,R)=0$ for all $n\ge 1$.
   \item 
   We denote by $(-)^\ast$ the $R$-dual functor $\Hom_R(-,R)$.
   For an $R$-module $M$ we denote by $\varphi_M : M\to M^{\ast\ast}$ the canonical map given by $\varphi_M(x)(f)=f(x)$ for $x\in M$ and $f\in M^\ast$.
   A finitely generated $R$-module $M$ is called {\em torsionless} if the canonical map $\varphi_M$ is injective and called {\em reflexive} if $\varphi_M$ is bijective.
   Note that a finitely generated $R$-module $M$ is torsionless if and only if it is $1$-torsionfree.
   Similarly, a finitely generated $R$-module $M$ is reflexive if and only if it is $2$-torsionfree.
   \item
    We denote by $\proj R$ (resp. $\gp R$) the full subcategory of $\mod R$ consisting of finitely generated projective (resp. Gorenstein projective) $R$-modules.
    \item
    The {\em projective dimension} (resp. {\em Gorenstein dimension}) of a finitely generated $R$-module $M$ is defined to be the infimum of integers $n$ such that there exists an exact sequence
    $$
    0\to X_n\to X_{n-1}\to\cdots\to X_1\to X_0\to M\to0
    $$
    of finitely generated $R$-modules with $X_i$ projective (resp. Gorenstein projective).
    \item 
    Let $M, N$ be finitely generated $R$-modules.
    We say that $M$ and $N$ are {\em stably isomorphic} if there are finitely generated projective modules $P, Q$ such that $M\oplus P\cong N\oplus Q$, and then write $M\approx N$.
    Note that $M\approx N$ if and only if $M$ and $N$ are isomorphic as objects of $\underline{\mod R}$.
\end{enumerate}
\end{dfn}

We fix some notions for complexes.
Let $\mathcal{A}$ be an abelian category.
A complex $X=X^{\bullet}$ of objecsts of $\mathcal{A}$ is denoted as
$$
X^{\bullet}=(\cdots\xrightarrow{d_X^{i-2}} X^{i-1}\xrightarrow{d_X^{i-1}}X^i\xrightarrow{d_X^{i}}X^{i+1}\xrightarrow{d_X^{i+1}}\cdots).
$$
We denote by $C^i(X)$ the cokernel of the differential $d_X^{i-1}:X^{i-1}\to X^i$.
Similarly, for each integer $i$, we define the $i$th {\em cocycle} $Z^i(X)$ to be the kernel of $d_X^i$, the $i$th {\em coboundary} $B^i(X)$ to be the image of $d_X^{i-1}$ and the $i$th {\em cohomology} $\H^i(X)={Z^i(X)}/{B^i(X)}$.
For an integer $n$, the complex $X[n]$ is defined by $X[n]^i=X^{n+i}$ and $d_{X[n]}^i = (-1)^n d_X^{n+i}$ for all $i$.


Let $f:X\to Y$ be a homomorphism of complexes of objects of $\mathcal{A}$.
The {\em mapping cone}, or simply cone, of $f$ is the complex defined as
$$
\Cone f = (\cdots\to X^i \oplus Y^{i-1}\xrightarrow{\begin{pmatrix}-d_X^i & 0 \\ f^i & d_Y^{i-1}\end{pmatrix}}X^{i+1}\oplus Y^i\xrightarrow{\begin{pmatrix}-d_X^{i+1} & 0 \\ f^{i+1} & d_Y^{i}\end{pmatrix}}X^{i+2}\oplus Y^{i+1}\to\cdots).
$$
We often use the symbol $^\bullet$ for claims on complexes to omit the phrase “for all $i\in\ZZ$”.
For example, “$C^{\bullet}(X)$ is projective” means that $C^i(X)$ is projective for all integers $i$, and “$\H^{\bullet}(f):\H^{\bullet}(X)\to \H^{\bullet}(Y)$ is surjective” means that $\H^i(f):\H^i(X)\to \H^i(Y)$ is surjective for all integers $i$.

\section{Long exact sequences of Auslander--Bridger type}
The two exact sequences described in \cite[Theorem 2.8]{AB}, which are called {\em Auslander--Bridger sequences}, are fundamental tools in the stable module theory.
The existence of these sequences was found in \cite{AusC}.
Also, \cite[Theorem 2.3]{Yos} is the Auslander-Bridger sequence for complexes consisting of projective modules.
In this section, in a more general setting, we describe the existence of long exact sequences of Auslander--Bridger type for right or left exact functors and complexes.

Let $\mathcal{A}$ and $\mathcal{B}$ be abelian categories.
Suppose that $\mathcal{A}$ has enough projective objects. 
For a left (resp. right) exact additive contravariant (resp. covariant) functor $F:\mathcal{A}\to \mathcal{B}$ (resp. $G:\mathcal{A}\to \mathcal{B}$), we denote by $R^n F : \mathcal{A}\to\mathcal{B}$ (resp. $L^n G : \mathcal{A}\to\mathcal{B}$) the $n$th right (resp. left) derived functor of $F$ (resp. $G$).

Let $F:\mathcal{A}\to\mathcal{B}$ (resp. $G:\mathcal{A}\to\mathcal{B}$) be a left (resp. right) exact additive contravariant (resp. covariant) functor and $X$ a complex of objects of $\mathcal{A}$.
Then, for any integer $i$, the natural map $\rho^i_{X,F}: \H^{-i}(F(X))\to F(\H^i(X))$ (resp. $\tau^i_{X,G}:G(\H^i(X))\to \H^i(G(X))$) is defined; see \cite[Chapter IV]{CE}.
If each component of $X$ is orthogonal to the $i$th derived functor of $F$ (resp. $G$) for all $i>0$, then the following exact sequences exist.
These sequences can be regarded as a generalization of Auslander--Bridger sequences; see Example \ref{ABex}.

\begin{thm}\label{ABtype}
Let $\mathcal{A}$ and $\mathcal{B}$ be abelian categories.
Suppose that $\mathcal{A}$ has enough projective objects.
\begin{enumerate}[\rm(1)]
   \item 
   Let $F:\mathcal{A}\to \mathcal{B}$ be a left exact additive contravariant functor and $X$ a complex of objects of $\mathcal{A}$.
   Suppose that $R^n F(X^j)$ for all $n>0$ and $j\in\ZZ$.
   For any integer $i$ there exists a long exact sequence
   $$
   \xymatrix@R-1pc@C-1pc{
   0\ar[r]&R^1 F(C^{i+1}(X))\ar[r]&\H^{-i}(F(X))\ar[r]^-{\rho^i_{X,F}}&F(\H^i(X))\\
   \ar[r]&R^2 F(C^{i+1}(X))\ar[r]&R^1 F(C^i(X))\ar[r]&R^1 F(\H^i(X))\\
   \ar[r]&R^3 F(C^{i+1}(X))\ar[r]&R^2 F(C^i(X))\ar[r]&R^2 F(\H^i(X))\ar[r]&\cdots.
   }
   $$
   \item 
   Let $G:\mathcal{A}\to \mathcal{B}$ be a right exact additive covariant functor and $X$ a complex of objects of $\mathcal{A}$.
   Suppose that $L^n G(X^j)$ for all $n>0$ and $j\in\ZZ$.
   For any integer $i$ there exists a long exact sequence
   $$
   \xymatrix@R-1pc@C-1pc{
   \cdots\ar[r]&L^2 G(\H^i(X))\ar[r]&L^2 G(C^i(X))\ar[r]&L^3 G(C^{i+1}(X))\\
   \ar[r]&L^1 G(\H^i(X))\ar[r]&L^1 G(C^i(X))\ar[r]&L^2 G(C^{i+1}(X))\\
   \ar[r]&G(\H^i(X))\ar[r]^-{\tau^i_{X,G}}&\H^i(G(X))\ar[r]&L^1 G(C^{i+1}(X))\ar[r]&0.
   }
   $$
   \end{enumerate}
\end{thm}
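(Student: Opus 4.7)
The plan is to decompose the complex $X$ into two natural short exact sequences, apply $F$ (respectively $G$), and splice together the resulting long exact sequences of derived functors. I will focus on part (1); part (2) follows by a strictly dual argument. For each integer $i$, writing the differential as the composition $d_X^i=\bar d^i\circ \pi$ with $\pi\colon X^i\twoheadrightarrow C^i(X)$ and $\bar d^i\colon C^i(X)\to X^{i+1}$, the image of $\bar d^i$ is $B^{i+1}(X)$ and its kernel is $\H^i(X)$. This yields the two canonical short exact sequences
\begin{equation*}
(A_i)\colon 0\to B^{i+1}(X)\to X^{i+1}\to C^{i+1}(X)\to 0,\qquad (B_i)\colon 0\to \H^i(X)\to C^i(X)\to B^{i+1}(X)\to 0.
\end{equation*}

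The next task is to identify $\H^{-i}(F(X))$ intrinsically. Left-exactness of $F$ applied to $0\to B^i(X)\to X^i\to C^i(X)\to 0$ yields $\Ker F(d_X^{i-1})=F(C^i(X))$ (embedded in $F(X^i)$ via $F(\pi)$), and the factorization $F(d_X^i)=F(\pi)\circ F(\bar d^i)$ identifies the image of $F(d_X^i)$ with the image of $F(\bar d^i)\colon F(X^{i+1})\to F(C^i(X))$; hence $\H^{-i}(F(X))\cong \Cok\bigl(F(\bar d^i)\bigr)$. Now apply $F$ to $(A_i)$: the vanishing $R^nF(X^{i+1})=0$ for $n>0$ forces a four-term exact sequence
\[
0\to F(C^{i+1}(X))\to F(X^{i+1})\to F(B^{i+1}(X))\to R^1F(C^{i+1}(X))\to 0,
\]
together with dimension-shift isomorphisms $R^nF(B^{i+1}(X))\cong R^{n+1}F(C^{i+1}(X))$ for $n\ge 1$. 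Applying $F$ to $(B_i)$ gives the standard long exact sequence of derived functors relating $F(\H^i(X))$, $F(C^i(X))$, $F(B^{i+1}(X))$, and their higher $R^nF$.

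To splice, observe that since $\bar d^i$ factors as $C^i(X)\twoheadrightarrow B^{i+1}(X)\hookrightarrow X^{i+1}$, the cokernel of $F(\bar d^i)$ sits in a short exact sequence
\[
0\to R^1F(C^{i+1}(X))\to \H^{-i}(F(X))\to F(C^i(X))/F(B^{i+1}(X))\to 0,
\]
and by left-exactness of $F$ applied to $(B_i)$ the right-hand quotient embeds into $F(\H^i(X))$. Concatenating this sequence with the long exact sequence from $(B_i)$, and substituting each $R^nF(B^{i+1}(X))$ by $R^{n+1}F(C^{i+1}(X))$ via the dimension-shift, yields the desired long exact sequence. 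Part (2) proceeds by the dual argument: with $G$ right exact and covariant, identify $\H^i(G(X))\cong \Ker\bigl(G(\bar d^i)\colon G(C^i(X))\to G(X^{i+1})\bigr)$ and run the analogous splicing for the left derived functors of $G$, using $L^nG(X^{i+1})=0$ for $n>0$. The main obstacle is bookkeeping: one must verify that the connecting map $\H^{-i}(F(X))\to F(\H^i(X))$ (respectively $G(\H^i(X))\to \H^i(G(X))$) produced by the splice coincides with the canonical map $\rho^i_{X,F}$ (respectively $\tau^i_{X,G}$), which is settled by chasing naturality through each of the identifications above.
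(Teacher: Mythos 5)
Your proposal is correct and follows essentially the same route as the paper: the same two short exact sequences $0\to B^{i+1}(X)\to X^{i+1}\to C^{i+1}(X)\to 0$ and $0\to \H^i(X)\to C^i(X)\to B^{i+1}(X)\to 0$, the same dimension-shift $R^nF(B^{i+1}(X))\cong R^{n+1}F(C^{i+1}(X))$, and the same identifications $Z^{-i}(F(X))\cong F(C^i(X))$ and $\Cok F(\bar d^i)\cong \H^{-i}(F(X))$. The only cosmetic difference is that the paper organizes the splice via the snake lemma on a commutative diagram, whereas you perform the concatenation by hand through the filtration of $\Cok F(\bar d^i)$; the content is identical, and the dual argument for (2) matches the paper's as well.
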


\begin{proof}
We first note that $R^k F(B^{i+1}(X))\cong R^{k+1} F(C^{i+1}(X))$ for all $k>0$ and $i\in\ZZ$ by the exact sequence $0\to B^{i+1}(X)\to X^{i+1}\to C^{i+1}(X)\to0$ and the assumption that $R^k F(X^{i+1})=0$.
From the exact sequence $0\to \H^i(X)\to C^i(X)\to B^{i+1}(X)\to0$ we obtain an exact sequence
\begin{equation}\label{1.1F}
\begin{split}
0\to F(B^{i+1}(X))\to F(C^i(X))\to F(\H^i(X))\to
R^2 F(C^{i+1}(X))\to R^1 F(C^i(X))\to R^1 F(\H^i(X))\to\cdots.
\end{split}
\end{equation}

Also, by the exact sequence $F(X^{i+1})\to F(B^{i+1}(X))\to R^1 F(C^{i+1}(X))\to0$ and the commutative diagram
$$
\xymatrix@R-1pc@C-1pc{
F(X^{i+1})\ar[rr]\ar[dd]&&F(X^i)\ar@{=}[dd]\\
&&\\
F(B^{i+1}(X))\ar@<-0.3ex>@{^{(}->}[rr]&&F(X^i),
}
$$
we have the exact sequence
\begin{equation}\label{1.2F}
0\to B^{-i}(F(X))\to F(B^{i+1}(X))\to R^1 F(C^{i+1}(X))\to0.
\end{equation}
Combining these exact sequences, we get the commutative diagram
$$
\xymatrix@R-1pc@C-1pc{
&0\ar[d]&&&&\\
0\ar[r]&B^{-i}(F(X))\ar[r]\ar[d]&Z^{-i}(F(X))\ar[r]\ar[d]&\H^{-i}(F(X))\ar[r]\ar[d]^-{\rho^i_{X, F}}&0&\\
0\ar[r]&F(B^{i+1}(X))\ar[r]\ar[d]&F(C^{i}(X))\ar[r]&F(\H^i(X))\ar[r]&R^2 F(C^{i+1}(X))\ar[r]&\cdots\\
&R^1 F(C^{i+1}(X))\ar[d]&&&&\\
&0&&&&
}
$$
with exact rows and columns, where the second row is the exact sequence (\ref{1.1F}), the first column is (\ref{1.2F}) and the second column is the isomorphism $Z^{-i}(F(X))\cong F(C^{i}(X))$ obtained from the exact sequence $0\to F(C^i(X))\to F(X^i)\to F(X^{i+1})$.
The desired exact sequence is obtained from the snake lemma.

The proof of the assertion (2) is similar to the proof of the assertion (1).
For the reader’s convenience, we give the proof of it.
We obtain an exact sequence
\begin{equation}\label{2.1G}
\begin{split}
\cdots\to L^1 G(\H^i(X))\to L^1 G(C^i(X))\to L^2 G(C^{i+1}(X))
\to G(\H^i(X)) \to G(C^i(X)) \to G(B^{i+1}(X))\to0.
\end{split}
\end{equation}
Also, by the exact sequence $0\to L^1 G(C^{i+1}(X))\to G(B^{i+1}(X))\to G(X^{i+1})$ and the commutative diagram
$$
\xymatrix@R-1pc@C-1pc{
G(X^i)\ar[rr]\ar@{=}[dd]\ar@{->>}[rr]&&G(B^{i+1}(X))\ar[dd]\\
&&\\
G(X^i)\ar[rr]&&G(X^{i+1}),
}
$$
we have the exact sequence
\begin{equation}\label{2.2G}
0\to L^1 G(C^{i+1}(X))\to G(B^{i+1}(X))\to B^{i+1}(G(X))\to0.
\end{equation}
Combining these exact sequences, we get the commutative diagram
$$
\xymatrix@R-1pc@C-1pc{
&&&&0\ar[d]&\\
&&&&L^1 G(C^{i+1}(X))\ar[d]&\\
\cdots\ar[r]& L^2 G(C^{i+1}(X))\ar[r]&G(\H^i(X))\ar[r]\ar[d]^-{\tau^i_{X,G}}&G(C^{i}(X))\ar[r]\ar[d]&G(B^{i+1}(X))\ar[r]\ar[d]&0\\
&0\ar[r]&\H^i(G(X))\ar[r]&C^i(G(X))\ar[r]&B^{i+1}(G(X))\ar[r]\ar[d]&0\\
&&&&0&
}
$$
with exact rows and columns, where the first row is the exact sequence (\ref{2.1G}), the first column is (\ref{2.2G}) and the second column is the isomorphism $G(C^i(X))\cong C^i(G(X))$ obtained from the exact sequence $G(X^{i-1})\to G(X^i)\to G(C^i(X))\to0$.
Similar to the proof of the assertion (1), we obtain the desired exact sequence by applying the snake lemma.
\end{proof}

\begin{rmk}\label{ABtypeZ}
Let $\mathcal{A'}$ and $\mathcal{B}$ be abelian categories and $F':\mathcal{A'}\to\mathcal{B}$ be a left exact additive covariant functor.
Suppose that $\mathcal{A'}$ has enough injective objects.
Then from Theorem \ref{ABtype} we obtain the following long exact sequence:
Let $Y$ be a complex of objects of $\mathcal{A'}$ and $i$ an integer.
Suppose that $R^n F'(Y^j)$ for all $n>0$ and $j\in\ZZ$.
Then there exists a long exact sequence
$$
\xymatrix@R-1pc@C-1pc{
0\ar[r]&R^1 F'(Z^{i-1}(Y))\ar[r]&\H^{i}(F'(Y))\ar[r]^-{\delta^i_{F',Y}}&F'(\H^{i}(Y))\\
\ar[r]&R^2 F'(Z^{i-1}(Y))\ar[r]&R^1 F'(Z^{i}(Y))\ar[r]&R^1 F'(\H^{i}(Y))\\
\ar[r]&R^3 F'(Z^{i-1}(Y))\ar[r]&R^2 F'(Z^{i}(Y))\ar[r]&R^2 F'(\H^{i}(Y))\ar[r]&\cdots.
}
$$
Similarly as above, for a right exact additive contravariant $G':\mathcal{A'}\to \mathcal{B}$, we can state the corresponding statement.
\end{rmk}

The result below is a direct corollary of the above theorem and remark.
The assertion (1) is the long version of \cite[Theorem 2.3]{Yos}.
The assertions (2) and (3) are the $\Tor$ version and the dual version of (1), respectively.
Also, we can state the same claim for the local cohomology functor, when $R$ is commutative.
Suppose that $R$ is commutative and let $\mathfrak{a}$ be an ideal of $R$.
Denote by $\Gamma_{\mathfrak{a}}(-)$ the {\em $\mathfrak{a}$-torsion functor}; recall that $\Gamma_{\mathfrak{a}}(M)=\{x\in M\mid \mathfrak{a}^r x=0\text{ for some }r>0\}$ for an $R$-module $M$.
Let $\HL^i_{\mathfrak{a}}(-)$ be the $i$th {\em local cohomology functor with respect to $\mathfrak{a}$}, that is, the $i$th right derived functor of $\Gamma_{\mathfrak{a}}(-)$; see \cite{BS, BH} for details.

\begin{cor}\label{fundex}
Let $R$ be a two-sided noetherian ring.
Let $M\in\Mod R$ and $N\in\Mod R^{\mathrm{op}}$.
\begin{enumerate}[\rm(1)]
   \item 
   Let $X$ be a complex of projective $R$-modules.
   Then for any integer $i$ there exists a long exact sequence
   $$
   \xymatrix@R-1pc@C-1pc{
   0\ar[r]&\Ext^1_R(C^{i+1}(X), M)\ar[r]&\H^{-i}(\Hom_R(X,M))\ar[r]^-{\rho^i_{X,M}}&\Hom_R(\H^i(X),M)\\
   \ar[r]&\Ext^2_R(C^{i+1}(X),M)\ar[r]&\Ext^1_R(C^i(X),M)\ar[r]&\Ext^1_R(\H^i(X),M)\\
   \ar[r]&\Ext^3_R(C^{i+1}(X),M)\ar[r]&\Ext^2_R(C^i(X),M)\ar[r]&\Ext^2_R(\H^i(X),M)\ar[r]&\cdots.
   }
   $$
   \item 
   Let $X$ be a complex of flat $R$-modules.
   For any integer $i$ there exists a long exact sequence
   $$
   \xymatrix@R-1pc@C-1pc{
   \cdots\ar[r]&\Tor_2^R(\H^i(X),N)\ar[r]&\Tor_2^R(C^i(X),N)\ar[r]&\Tor_3^R(C^{i+1}(X),N)\\
   \ar[r]&\Tor_1^R(\H^i(X),N)\ar[r]&\Tor_1^R(C^i(X),N)\ar[r]&\Tor_2^R(C^{i+1}(X),N)\\
   \ar[r]&\H^i(X)\otimes_R N\ar[r]^-{\tau^i_{X,N}}&\H^i(X\otimes_R N)\ar[r]&\Tor_1^R(C^{i+1}(X),N)\ar[r]&0.
   }
   $$
   \item 
   Let $Y$ be a complex of injective $R$-modules.
   For any integer $i$ there exists a long exact sequence
   $$
   \xymatrix@R-1pc@C-1pc{
   0\ar[r]&\Ext^1_R(M,Z^{i-1}(Y))\ar[r]&\H^{i}(\Hom_R(M,Y))\ar[r]^-{\delta^i_{M,Y}}&\Hom_R(M,\H^{i}(Y))\\
   \ar[r]&\Ext^2_R(M,Z^{i-1}(Y))\ar[r]&\Ext^1_R(M,Z^{i}(Y))\ar[r]&\Ext^1_R(M,\H^{i}(Y))\\
   \ar[r]&\Ext^3_R(M,Z^{i-1}(Y))\ar[r]&\Ext^2_R(M,Z^{i}(Y))\ar[r]&\Ext^2_R(M,\H^{i}(Y))\ar[r]&\cdots.
   }
   $$
   \item
   Suppose that $R$ is commutative.
   Let $\mathfrak{a}$ be an ideal of $R$ and $Y$ a complex of injective $R$-modules.
   For any integer $i$ there exists a long exact sequence
   $$
   \xymatrix@R-1pc@C-1pc{
   0\ar[r]&\HL_{\mathfrak{a}}^1(Z^{i-1}(Y))\ar[r]&\H^{i}(\Gamma_\mathfrak{a}(Y))\ar[r]^-{\delta^i_{\mathfrak{a},Y}}&\Gamma_{\mathfrak{a}}(\H^{i}(Y))\\
   \ar[r]&\HL_{\mathfrak{a}}^2(Z^{i-1}(Y))\ar[r]&\HL_{\mathfrak{a}}^1(Z^{i}(Y))\ar[r]&\HL_{\mathfrak{a}}^1(\H^i(Y))\\
   \ar[r]&\HL_{\mathfrak{a}}^3(Z^{i-1}(Y))\ar[r]&\HL_{\mathfrak{a}}^1(Z^{i}(Y))\ar[r]&\HL_{\mathfrak{a}}^2(\H^i(Y))\ar[r]&\cdots.
   }
   $$
\end{enumerate}
\end{cor}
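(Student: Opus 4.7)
The plan is to deduce all four long exact sequences as immediate instances of Theorem \ref{ABtype} and Remark \ref{ABtypeZ}, by picking the appropriate functor in each case and verifying the required vanishing of higher derived functors on the components of the complex.

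For part (1), I apply Theorem \ref{ABtype}(1) with $\mathcal{A}=\Mod R$, $\mathcal{B}=\Mod\ZZ$ (or $\Mod R^{\mathrm{op}}$), and $F=\Hom_R(-,M)$. This is a left exact additive contravariant functor whose $n$th right derived functor is $\Ext^n_R(-,M)$. The hypothesis $R^nF(X^j)=0$ for $n>0$ is satisfied because each $X^j$ is projective, so $\Ext^n_R(X^j,M)=0$. For part (2), I apply Theorem \ref{ABtype}(2) with $G=-\otimes_R N$, a right exact additive covariant functor whose $n$th left derived functor is $\Tor_n^R(-,N)$; flatness of each $X^j$ yields $\Tor_n^R(X^j,N)=0$ for $n>0$. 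Parts (3) and (4) are applications of Remark \ref{ABtypeZ}: take the left exact additive covariant functor $F'=\Hom_R(M,-)$ and $F'=\Gamma_\mathfrak{a}(-)$ respectively. In both cases each $Y^j$ is injective, so $\Ext^n_R(M,Y^j)=0$ and $\HL^n_\mathfrak{a}(Y^j)=0$ for $n>0$, giving the required vanishing of $R^nF'(Y^j)$.

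Once the functors are matched up in this way, the long exact sequences produced by Theorem \ref{ABtype} and Remark \ref{ABtypeZ} immediately specialize to the four sequences stated, with the natural maps $\rho^i_{X,F}$, $\tau^i_{X,G}$, $\delta^i_{F',Y}$ instantiated as the concrete evaluation and tensor maps $\rho^i_{X,M}$, $\tau^i_{X,N}$, $\delta^i_{M,Y}$, $\delta^i_{\mathfrak{a},Y}$ appearing in the statement.

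There is no real obstacle here, since the content is essentially bookkeeping: the corollary is a list of direct specializations. The only thing worth double-checking is the placement of variance and the left/right derived functor conventions, to make sure that, for instance, Remark \ref{ABtypeZ} (phrased for left exact covariant $F'$ on a category with enough injectives) is genuinely the right tool for parts (3) and (4), where $\Mod R$ does have enough injectives and both $\Hom_R(M,-)$ and $\Gamma_\mathfrak{a}(-)$ are left exact covariant functors.
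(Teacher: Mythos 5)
Your proposal is correct and takes exactly the same route as the paper, which likewise obtains all four sequences by specializing Theorem \ref{ABtype}(1),(2) and Remark \ref{ABtypeZ} to the functors $\Hom_R(-,M)$, $-\otimes_R N$, $\Hom_R(M,-)$ and $\Gamma_{\mathfrak{a}}(-)$. Your explicit check of the vanishing hypotheses on the components (projectivity, flatness, injectivity of the terms) is detail the paper leaves implicit but is exactly what is needed.
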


\begin{proof}
Applying Theorem \ref{ABtype}(1)(2) and Remark \ref{ABtypeZ} to the functors $F(-)=\Hom_R(-, M)$, $G(-)=(-)\otimes_R M$, $F_1'=\Hom_R(M,-)$ and $F_2'=\Gamma_\mathfrak{a}(-)$, respectively, we obtain the desired exact sequences.
\end{proof}

We close the section by explaining some examples of Theorem \ref{ABtype} and Corollary \ref{fundex}.
First, we recover \cite[Theorem 2.8]{AB} and \cite[Proposition 2.6(a)]{AB} from Corollary \ref{fundex}(1)(2).

\begin{ex}\label{ABex}
Let $L$ be a finitely generated $R$-module and let $\cdots P_1\to P_0\to L\to 0$ be a projective resolution of $L$ with $P_i \in \proj R$ for all $i\ge 0$.
We set 
$$
X=(0\to P_0^\ast \to P_1^\ast \to \cdots\to P_i^\ast \to\cdots).
$$
Then $X$ is a complex of projective $R$-modules.
For an $R$-module $M$, we compute the long exact sequences in Corollary \ref{fundex}(1)(2).
Now the following hold.
\begin{enumerate}[\rm(1)]
   \item 
   $C^0(X)=\Cok(0\to P_0^\ast)=P_0^\ast$ and $C^{i+1}(X)=\Cok(P_i^\ast \to P_{i+1}^\ast)=\tr\syz^i L$ for all $i\ge0$.
   \item 
   $\H^i(X)=\Ext^i(L,R)$ for all integers $i$.
   \item 
   There exist isomorphisms $\Hom(X,M)\cong X^\ast \otimes M$ and $\Hom(X^\ast, M)\cong X\otimes M$ of complexes.
   In particular, $\H^{-i}(\Hom(X,M))\cong\Tor_i(L,M)$ and $\H^i(X\otimes M)\cong\Ext^i(L,M)$ for all integers $i$.
\end{enumerate}
Therefore, for $i\ge1$, the long exact sequence
$$
\xymatrix@R-1pc@C-1pc{
0\ar[r]&\Ext^1_R(\tr\syz^i L, M)\ar[r]&\Tor_i^R(L,M)\ar[r]&\Hom_R(\Ext^i_R(L,R),M)\\
\ar[r]&\Ext^2_R(\tr\syz^i L,M)\ar[r]&\Ext^1_R(\tr\syz^{i-} L,M)\ar[r]&\Ext^1_R(\Ext^i_R(L,R),M)\\
\ar[r]&\Ext^3_R(\tr\syz^i L,M)\ar[r]&\Ext^2_R(\tr\syz^{i-1} L,M)\ar[r]&\Ext^2_R(\Ext^i_R(L,R),M)\ar[r]&\cdots
}
$$
is obtained from Corollary \ref{fundex}(1), and the long exact sequence
$$
\xymatrix@R-1pc@C-1pc{
\cdots\ar[r]&\Tor_2^R(\Ext^i_R(L,R),M)\ar[r]&\Tor_2^R(\tr\syz^{i-1} L,M)\ar[r]&\Tor_3^R(\tr\syz^i L,M)\\
\ar[r]&\Tor_1^R(\Ext^i_R(L,R),M)\ar[r]&\Tor_1^R(\tr\syz^{i-1} L,M)\ar[r]&\Tor_2^R(\tr\syz^i L,M)\\
\ar[r]&\Ext^i_R(L,R)\otimes_R M\ar[r]&\Ext^i_R(L,M)\ar[r]&\Tor_1^R(\tr\syz^i L,M)\ar[r]&0
}
$$
is obtained from Corollary \ref{fundex}(2).
These exact sequences are none other than the long versions of \cite[Theorem 2.8]{AB}.
When $i=0$, the exact sequences
$$
0\to \Ext^1_R(\tr L,M)\to L\otimes_R M\to \Hom_R(L^\ast, M)\to \Ext_R^2(\tr L,M)\to \Ext^1_R(P_0^\ast,M)=0
$$
and
$$
0=\Tor_1^R(P_0^\ast, M)\to \Tor_2^R(\tr L,M)\to L^\ast \otimes_R M \to \Hom_R(L,M)\to \Tor_1^R(\tr L,M)\to0
$$
are obtained from Theorem \ref{fundex}(1) and (2), respectively.
These are none other than \cite[Proposition 2.6(a)]{AB} and its dual sequence.
\end{ex}

To state our examples for local cohomology, we need to recall the concept of cosyzygies.
For an $R$-module $M$, we denote by $\mho^i M$ the $i$th {\em cosyzygy} of $M$, that is, $\mho^i M$ is defined as $\Ker(I^i_M \to I^{i+1}_M)$, where $I_M=(0\to I^0_M\to I^1_M \to \cdots \to I^i_M \to \cdots)$ is an injective resolution of $M$.
Note that taking $i$th cosyzygy induces an additive functor $\mho^i : \overline{\Mod R}\to \overline{\Mod R}$, where $\overline{\Mod R}$ is the injective stabilization of $\Mod R$ defined in the same way as the projective stabilization.

\begin{ex}\label{injABex}
Suppose that $R$ is commutative and let $\mathfrak{b}$ be an ideal of $R$.
Let $N$ be an $R$-module and $I=(0\to I^0\to I^1\to \cdots)$ be an injective resolution of $N$.
We set
$$
Y=\Gamma_{\mathfrak{b}}(I)=(0\to \Gamma_{\mathfrak{b}}(I^0) \to \Gamma_{\mathfrak{b}}(I^1) \to \cdots\to \Gamma_{\mathfrak{b}}(I^i) \to\cdots).
$$
By \cite[Proposition 2.1.4]{BS}, $Y$ is a complex of injective $R$-modules.
For a finitely generated $R$-module $M$ and an ideal $\mathfrak{a}$ of $R$, we compute the long exact sequence in Corollary \ref{fundex}(3)(4).
Now the following hold.
\begin{enumerate}[\rm(1)]
   \item 
   $Z^{i-1}(Y)=\Ker(\Gamma_{\mathfrak{b}}(I^{i-1})\to \Gamma_{\mathfrak{b}}(I^i))\cong\Gamma_{\mathfrak{b}}(\mho^{i-1}N)$ for all $i>0$.
   \item
   $\H^i(Y)=\HL^i_{\mathfrak{b}}(N)$ for all integers $i$.
\end{enumerate}
Also, to compute the $i$th cohomology of the complex $\Hom(M,Y)$, we recall the notion of {\em generalized local cohomology modules} in the sense of \cite{Her}.
For $R$-modules $A$, $B$ and an integer $i$, the $i$th generalized local cohomology module $\HL^i_{\mathfrak{b}}(A, B)$ with respect to $\mathfrak{b}$ is defined by 
$$
\HL^i_{\mathfrak{b}}(A, B)=\underset{n}{\varinjlim}\Ext^i(A/{\mathfrak{b}^n A}, B)\cong \H^i(\underset{n}{\varinjlim}\Hom(A/{\mathfrak{b}^n A}, I_B)), 
$$
where $I_B$ is an injective resolution of $B$.
Then the cohomology $\H^i(\Hom(M,Y))$ coincides with $\HL^i_{\mathfrak{b}}(M,N)$.
In fact, there are isomorphisms
\begin{equation*}
\begin{aligned}
\H^i(\Hom(M,Y))&=\H^i(\Hom(M,\Gamma_{\mathfrak{b}}(I)))=\H^i(\Hom(M,\underset{n}{\varinjlim}\Hom(R/{\mathfrak{b}^n}, I)))\\
&\cong \H^i(\underset{n}{\varinjlim}\Hom(M,\Hom(R/{\mathfrak{b}}^n,I)))\cong \H^i(\underset{n}{\varinjlim}\Hom(M/{\mathfrak{b}^n M}, I))\cong \HL^i_{\mathfrak{b}}(M,N),
\end{aligned}
\end{equation*}
where the third isomorphism follows from the assumption that $M$ is finitely generated.
Therefore, for $i\ge1$, the long exact sequence
$$
\xymatrix@R-1pc@C-1pc{
   0\ar[r]&\Ext^1_R(M,\Gamma_{\mathfrak{b}}(\mho^{i-1} N))\ar[r]&\HL^i_{\mathfrak{b}}(M,N)\ar[r]&\Hom_R(M,\HL^{i}_{\mathfrak{b}}(N))\\
   \ar[r]&\Ext^2_R(M,\Gamma_{\mathfrak{b}}(\mho^{i-1} N))\ar[r]&\Ext^1_R(M,\Gamma_{\mathfrak{b}}(\mho^{i} N)))\ar[r]&\Ext^1_R(M,\HL^{i}_{\mathfrak{b}}(N))\\
   \ar[r]&\Ext^3_R(M,\Gamma_{\mathfrak{b}}(\mho^{i-1} N))\ar[r]&\Ext^2_R(M,\Gamma_{\mathfrak{b}}(\mho^{i} N)))\ar[r]&\Ext^2_R(M,\HL^{i}_{\mathfrak{b}}(N))\ar[r]&\cdots
   }
$$
is obtained from Corollary \ref{fundex}(3).

Moreover, we have $\H^i(\Gamma_{\mathfrak{a}}(Y))=\H^i(\Gamma_{\mathfrak{a}}(\Gamma_{\mathfrak{b}}(I)))\cong \H^i(\Gamma_{\mathfrak{a}+\mathfrak{b}}(I))\cong \HL^i_{\mathfrak{a}+\mathfrak{b}}(N)$ by \cite[Exercise 1.1.2]{BS}.
So the long exact sequence
$$
\xymatrix@R-1pc@C-1pc{
   0\ar[r]&\HL_{\mathfrak{a}}^1(\Gamma_{\mathfrak{b}}(\mho^{i-1}N))\ar[r]&\HL^{i}_{\mathfrak{a}+\mathfrak{b}}(N)\ar[r]&\Gamma_{\mathfrak{a}}(\HL^{i}_{\mathfrak{b}}(N))\\
   \ar[r]&\HL_{\mathfrak{a}}^2(\Gamma_{\mathfrak{b}}(\mho^{i-1}N))\ar[r]&\HL_{\mathfrak{a}}^1(\Gamma_{\mathfrak{b}}(\mho^{i}N))\ar[r]&\HL_{\mathfrak{a}}^1(\HL^i_{\mathfrak{b}}(N))\\
   \ar[r]&\HL_{\mathfrak{a}}^3(\Gamma_{\mathfrak{b}}(\mho^{i-1}N))\ar[r]&\HL_{\mathfrak{a}}^2(\Gamma_{\mathfrak{b}}(\mho^{i}N))\ar[r]&\HL_{\mathfrak{a}}^2(\HL^i_{\mathfrak{b}}(N))\ar[r]&\cdots
   }
$$
is obtained from Corollary \ref{fundex}(4).
\end{ex}
\section{Split complexes and torsionlessness and reflexivity for complexes}

Throughout the remainder of this paper, let $R$ be a two-sided noetherian ring.
We consider the homotopy category $\K(\proj R)$ consisting of complexes over $\proj R$.
For simplicity, the category $\K(\proj R)$ is simply denoted as $\K(R)$.

In this section, we use some results obtained in \cite{Yos}.
Here is a comment about the paper \cite{Yos}.

\begin{rmk}\label{Yosinn}
The results \cite[Theorem 1.1 and Corollary 1.3]{Yos} are incorrect.
In fact, counterexamples to these results have been obtained in \cite[Corollary 1.5]{KLOT}.
In the proofs of \cite[Theorem 1.1 and Corollary 1.3]{Yos}, \cite[Theorem 8.5]{Yos} plays an important role, and the author wonders if there are gaps in the proof of \cite[Theorem 8.5]{Yos}, as it is noted in \cite[Remark 4.3]{KLOT}.
All results obtained in this paper are independent of \cite[Theorems 1.1, 8.5 and Corollary 1.3]{Yos}.
In this paper, we use only the correct results of \cite{Yos}.
\end{rmk}

The category $\K(R)$ has the duality $(-)^\ast:\K(R)\to \K(R^{\mathrm{op}})$, i.e., for any $X=(\cdots\xrightarrow{d_X^{i-2}}X^{i-1}\xrightarrow{d_X^{i-1}}X^i\xrightarrow{d_X^i}X^{i+1}\xrightarrow{d_X^{i+1}}\cdots)\in\K(R)$ the $R$-dual complex $X^\ast=(\cdots\xrightarrow{{d_X^{i+1}}^\ast}{X^{i+1}}^\ast \xrightarrow{{d_X^i}^\ast}{X^i}^\ast \xrightarrow{{d_X^{i-1}}^\ast}{X^{i-1}}^\ast \xrightarrow{{d_X^{i-2}}^\ast}\cdots)$ is also an object of $\K(R^{\mathrm{op}})$, and $(-)^{\ast\ast}\cong\Id$ holds.
Note that for any triangle $X\to Y\to Z\to X[1]$ in $\K(R)$ the $R$-dual sequence $Z^\ast \to Y^\ast \to X^\ast \to Z^\ast[1]$ is also a triangle in $\K(R^{\mathrm{op}})$. 
We state the following remark for the $R$-dual $(-)^\ast$.
\begin{rmk}\label{boundry}
Let $X\in \K(R)$ and $i$ be an integer.
\begin{enumerate}[\rm(1)]
   \item 
   As there exists an exact sequence $0\to B^i(X)\to X^i\to C^i(X)\to 0$ and $X^i$ is projective, one has $B^i(X)\cong \syz C^i(X)$.
   \item 
   Dualizing the exact sequence $X^{i-1}\to X^i \to C^i(X)\to 0$ by $R$, we have the exact sequence ${X^i}^\ast \to {X^{i-1}}^\ast \to \tr C^i(X)\to 0$.
   This means that $C^{-i+1}(X^\ast)=\tr C^i(X)$.
\end{enumerate}
\end{rmk}

The cohomologically surjectivity of morphisms in $\K(R)$ is necessary to state the stable complex theory due to Yoshino \cite{Yos}.
It is also an important concept in this paper.

\begin{dfn}\cite[Definition 6.5]{Yos}\label{cohsurj}
A morphism $f:X\to Y$ in $\K(R)$ is said to be {\em cohomologically surjective} (resp. {\em cohomologically injective}) if $\H^i(f):\H^i(X)\to \H^i(Y)$ is surjective (resp. injective) for all integers $i\in\ZZ$.
\end{dfn}

For a triangle $X\xrightarrow{f} Y\xrightarrow{g} Z\xrightarrow{h} X[1]$ in $\K(R)$ we obtain the long exact sequence of cohomologies
$$
\cdots\xrightarrow{\H^{i-1}(g)}\H^{i-1}(Z)\xrightarrow{\H^{i-1}(h)}\H^i(X)\xrightarrow{\H^i(f)}\H^i(Y)\xrightarrow{\H^i(g)}\H^i(Z)\xrightarrow{\H^i(h)}\H^{i+1}(X)\xrightarrow{\H^{i+1}(f)}\cdots.
$$
The triangles $X\xrightarrow{f} Y\xrightarrow{g} Z\xrightarrow{h} X[1]$ satisfying that the sequence
$$
0\to \H^i(X)\xrightarrow{\H^i(f)} \H^i(Y)\xrightarrow{\H^i(g)}\H^i(Z)\to0
$$
is exact for any integer $i$ play a crucial role in this paper.
We often use the next terminology, which essentially states the same thing as in Definition \ref{cohsurj}.
\begin{dfn}
A triangle $X\xrightarrow{f} Y\xrightarrow{g}Z\to X[1]$ in $\K(R)$ is called {\em cohomologically ghost} if the sequence $0\to \H^i(X)\xrightarrow{\H^{i}(f)}\H^i(Y)\xrightarrow{\H^i(g)}\H^i(z)\to 0$ is exact for all integers $i\in\ZZ$.
\end{dfn}

Note that the triangle $X\xrightarrow{f} Y\xrightarrow{g}Z\to X[1]$ in $\K(R)$ is cohomologically ghost if and only if the morphism $f$ is cohomologically injective, if and only if the morphism $g$ is cohomologically surjective.

The main subject of the Auslander--Bridger theory is the ideal quotient of $\mod R$ by $\proj R$.
As an analogue, Yoshino \cite{Yos} established a theory of the ideal quotient of $\K(R)$ by $\Add R$, which is defined as follows.
Objects of $\Add R$ behave like projective objects for cohomologically ghost triangles; see Lemma \ref{speq}.
Also, syzygies and cosyzygies of objects of $\K(R)$ are defined by using resolutions and coresolutions by objects of $\Add R$; see \ref{defsyzcosyz}.

\begin{dfn}\cite[Definition 5.2]{Yos}\label{AddRdef}
The full subcategory $\Add R$ of $\K(R)$ is defined as the intersection of full subcategories $\X$ of $\K(R)$ satisfying the following conditions.
\begin{enumerate}[\rm(1)]
   \item 
   The complex $R=(\cdots\xrightarrow{0}0\xrightarrow{0}R\xrightarrow{0}0\xrightarrow{0}\cdots)\in\K(R)$ is an object of $\X$.
   \item 
   $\X$ is closed under shifts, i.e., if $X\in\K(R)$, then $X[i]\in\X$ for all integers $i$.
   \item 
   $\X$ is closed under direct summands, i.e., for objects $X, Y\in\K(R)$ with $Y\in\X$, if $X$ is a direct summand of $Y$ in $\K(R)$, then $X\in\K(R)$.
   \item 
   $\X$ is closed under existing coproducts, i.e., for a family $\{X_j\}_{j\in J}$ of objects of $\X$, if the coproduct $\coprod_{j\in J}X_{j}$ exists in $\K(R)$, then $\coprod_{j\in J}X_{j}\in\X$.
\end{enumerate}
\end{dfn}

A complex $X\in\K(R)$ is called {\em split} if there exists a family $\{s^i : X^i\to X^{i-1}\}_{i\in\ZZ}$ of $R$-homomorphisms such that $d_X^{i} s^{i+1} d_X^i=d_X^i$ holds for all integers $i\in\ZZ$; see \cite[Section 1.4]{Wei} or \cite[Section 5]{Yos}.
Actually, the full subcategory of $\K(R)$ consisting of split complexes coincides with $\Add R$ defined above.
Also, split complexes can be characterized by the injectivity of the function taking cohomology modules.

\begin{thm}\cite[Lemma 5.3 and Theorem 5.8]{Yos}\label{splitdef}
Let $X\in\K(R)$.
Then the following are equivalent.
\begin{enumerate}[\rm(1)]
   \item 
   The complex $X$ belongs to $\Add R$.
   \item 
   The complex $X$ is split.
   \item
   There exists a complex $X^{\prime}\in \K(R)$ with $d_{X^\prime}=0$ such that $X\cong X^\prime$ in $\K(R)$.
   \item
   The module $C^i(X)$ is projective for all integers $i\in\ZZ$.
   \item
   The natural map $\Hom_{\K(R)}(X,Y)\to\prod_{i\in\ZZ}\Hom_R(\H^i(X), \H^i(Y))$ given by $f\mapsto (\H^i(f))_{i\in\ZZ}$ is bijective for all complexes $Y\in \K(R)$.
   \item
   The natural map $\Hom_{\K(R)}(X,Y)\to\prod_{i\in\ZZ}\Hom_R(\H^i(X), \H^i(Y))$ is injective for all complexes $Y\in \K(R)$.
\end{enumerate}
\end{thm}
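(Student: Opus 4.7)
The plan is to run the cycle $(1)\Rightarrow(2)\Leftrightarrow(4)\Rightarrow(3)\Rightarrow(1)$ for the first four conditions, and then to attach $(5)$ and $(6)$ via the chain $(3)\Rightarrow(5)\Rightarrow(6)\Rightarrow(4)$. The crucial step is the last implication $(6)\Rightarrow(4)$, which is where Corollary \ref{fundex} is used in an essential way.

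For $(1)\Rightarrow(2)$ it is enough to observe that the full subcategory of split complexes contains $R$ and is closed under shifts, direct summands, and existing coproducts, so by the minimality in Definition \ref{AddRdef} it contains $\Add R$. For $(2)\Leftrightarrow(4)$: given a splitting $\{s^i\}$, the endomorphism $d_X^i s^{i+1}$ of $X^{i+1}$ is idempotent with image $B^{i+1}(X)$, so $X^{i+1}\cong B^{i+1}(X)\oplus C^{i+1}(X)$ exhibits $C^{i+1}(X)$ as a summand of a projective. Conversely, projectivity of every $C^i(X)$ splits both canonical sequences $0\to B^i(X)\to X^i\to C^i(X)\to 0$ and $0\to Z^i(X)\to X^i\to B^{i+1}(X)\to 0$, and the resulting sections assemble into a splitting $\{s^i\}$. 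For $(4)\Rightarrow(3)$: these splittings produce decompositions $X^i\cong B^i(X)\oplus \H^i(X)\oplus B^{i+1}(X)$ with $\H^i(X)$ projective, and $X$ decomposes in $\K(R)$ as $\bigoplus_i \H^i(X)[-i]$ together with contractible summands $B^{i+1}\xrightarrow{\id} B^{i+1}$ which vanish in $\K(R)$; hence $X$ is isomorphic in $\K(R)$ to a complex with zero differential. For $(3)\Rightarrow(1)$: any complex $X'$ with $d_{X'}=0$ is a coproduct $\bigoplus_i X'^i[-i]$; each $X'^i$ is a finitely generated projective, hence a summand of some $R^{k_i}$, so $X'^i[-i]\in\Add R$, and the coproduct lies in $\K(R)$ because each degree is a finite direct sum, giving $X\in\Add R$.

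For $(3)\Rightarrow(5)$: writing $X$ as $\bigoplus_i \H^i(X)[-i]$ with zero differential, a chain map $\H^i(X)[-i]\to Y$ is the same data as a morphism $\H^i(X)\to Z^i(Y)$, homotopies identify it modulo morphisms factoring through $B^i(Y)$, and projectivity of $\H^i(X)$ lifts any homomorphism $\H^i(X)\to \H^i(Y)$ back to $Z^i(Y)$, producing the bijection $\Hom_{\K(R)}(\H^i(X)[-i],Y)\cong \Hom_R(\H^i(X),\H^i(Y))$ that assembles into (5). The implication $(5)\Rightarrow(6)$ is immediate. For $(6)\Rightarrow(4)$: since (6) is preserved under shifts of $X$ (via the identification $\Hom_{\K(R)}(X[k],Y)=\Hom_{\K(R)}(X,Y[-k])$) and $C^i(X)=C^1(X[i-1])$, it suffices to show $C^1(X)$ is projective. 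For any $M\in\mod R$ take a resolution $Y=(\cdots\to P_1\to P_0\to 0)$ of $M$ by finitely generated projectives, viewed in $\K(R)$ in non-positive degrees with $\H^0(Y)=M$ and other cohomologies zero. Successive lifts of chain maps and null-homotopies through this resolution, using projectivity of each $X^j$, give an isomorphism $\Hom_{\K(R)}(X,Y)\cong \H^0(\Hom_R(X,M))$ under which the map of (6) becomes exactly the canonical map $\rho^0_{X,M}$ of Corollary \ref{fundex}(1), whose kernel is $\Ext^1_R(C^1(X),M)$. Injectivity for all finitely generated $M$ forces $\Ext^1_R(C^1(X),-)$ to vanish on $\mod R$, and hence $C^1(X)$ is projective.

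The main obstacle is precisely this last identification: one must lift chain maps and null-homotopies of $X\to M[0]$ back through the resolution $Y$ in $\K(R)$, so that the abstract injectivity hypothesis (6) translates, via the Auslander--Bridger type exact sequence of Corollary \ref{fundex}(1), into the concrete vanishing $\Ext^1_R(C^1(X),M)=0$ for all finitely generated $M$.
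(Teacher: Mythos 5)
Your chain $(2)\Leftrightarrow(4)\Rightarrow(3)\Rightarrow(1)$ together with $(3)\Rightarrow(5)\Rightarrow(6)\Rightarrow(4)$ is sound, and your treatment of $(6)\Rightarrow(4)$ — identifying $\Hom_{\K(R)}(X,Y)$ with $\H^0(\Hom_R(X,M))$ for $Y$ a projective resolution of $M$ and invoking the kernel term $\Ext^1_R(C^1(X),M)$ from Corollary \ref{fundex}(1) — is correct and is genuinely different from the paper, which does not reprove this theorem at all but cites \cite[Lemma 5.3 and Theorem 5.8]{Yos} and only supplies a commutativity-free proof of \cite[Proposition 5.7]{Yos} (Proposition \ref{infsum}) to remove Yoshino's hypothesis that $R$ be commutative.

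The one genuine gap is in $(1)\Rightarrow(2)$. You assert that the full subcategory of split complexes is closed under direct summands and under existing coproducts in $\K(R)$, but neither is evident from the definition of splitness as the existence of contracting data $\{s^i\}$ with $d s d=d$: a direct summand is only a summand up to homotopy, and an existing coproduct $\coprod_j X_j$ in $\K(R)$ is a priori just some object with a universal property, not the degreewise direct sum. This is precisely the delicate point that forces Yoshino to analyze existing coproducts (his Proposition 5.7, reproved here as Proposition \ref{infsum}), so it cannot be dismissed as an observation. The gap is fillable from your own machinery, and without circularity: you establish $(2)\Leftrightarrow(4)\Leftrightarrow(6)$ independently of $(1)$, and the class of objects satisfying $(6)$ is formally closed under direct summands (the natural map for $X\oplus X'$ decomposes as a product of the maps for $X$ and $X'$) and under existing coproducts (if $\H^\bullet(f)=0$ then $\H^\bullet(f\alpha_j)=0$, so $f\alpha_j=0$ by $(6)$ for each $X_j$, whence $f=0$ by the uniqueness clause of the universal property). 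You should route $(1)\Rightarrow(2)$ through this characterization rather than through the raw definition of splitness; as written, the first implication is unsupported. A minor further point: in $(3)\Rightarrow(1)$ you should also note why the degreewise-finite direct sum $\bigoplus_i X'^i[-i]$ actually satisfies the universal property of the coproduct in $\K(R)$ (chain maps and homotopies out of it are determined degreewise), since Definition \ref{AddRdef} only closes $\Add R$ under coproducts that exist in $\K(R)$.
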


In \cite[Section 5]{Yos}, the ring $R$ is assumed to be commutative.
However, the comtativity of $R$ is not used in the proof of the above theorem, except the proof of the implication $(1)\Rightarrow(2)$, i.e., \cite[Theorem 5.8$(1)\Rightarrow(2)$]{Yos}.
The proof of this implication requires \cite[Proposition 5.7]{Yos}, and the proof of \cite[Proposition 5.7]{Yos} uses the commutativity of the ring $R$.
However, as we will see below, \cite[Proposition 5.7]{Yos} can be proved without using the commutativity of the ring, and thus the above theorem is true for general (not necessarily commutative) two-sided noetherian rings.

\begin{prop}\cite[Proposition 5.7]{Yos}\label{infsum}
Let $\{ X_j\}_{j\in J}$ be a family of objects of $\K(R)$ such that the differential $d_{X_j}$ of the complex $X_j$ is zero map for all $j\in J$.
Assume that the coproduct $\coprod_{j\in J}X_{j}$ in $\K(R)$ exists.
Then, for any integer $i$, the $i$th component ${X_j}^i$ is zero except for finite elements of the set $J$.
\end{prop}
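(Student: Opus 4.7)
The plan is to test the hypothetical coproduct $X=\coprod_{j\in J}X_j$ against the stalk complex $R[-i]\in\K(R)$ and convert the resulting identification of Hom-sets into a noetherian constraint. For any complex $Y\in\K(R)$, a chain map $Y\to R[-i]$ is a single $R$-linear map $Y^i\to R$ vanishing on $\image d_Y^{i-1}$, i.e., an element of $Z^{-i}(Y^\ast)\subseteq(Y^i)^\ast$, while the null-homotopies correspond precisely to the submodule $B^{-i}(Y^\ast)=\image(d_Y^i)^\ast$. Thus
\[
\Hom_{\K(R)}(Y,R[-i])\;\cong\;\H^{-i}(Y^\ast)
\]
as left $R$-modules, naturally in $Y$. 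For $Y=X_j$ the right-hand side collapses to $(X_j^i)^\ast$ since the differential of $X_j$ vanishes, while for $Y=X$ it is a subquotient of the finitely generated left $R$-module $(X^i)^\ast$, hence itself finitely generated because $R$ is left noetherian.

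Next I would invoke the universal property of the coproduct applied to $R[-i]\in\K(R)$, which yields an isomorphism of left $R$-modules
\[
\H^{-i}(X^\ast)\;\cong\;\prod_{j\in J}\Hom_{\K(R)}(X_j,R[-i])\;\cong\;\prod_{j\in J}(X_j^i)^\ast.
\]
Combining the two steps, $\prod_{j\in J}(X_j^i)^\ast$ is a finitely generated, hence noetherian, left $R$-module.

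Finally, inside this noetherian module I would consider the directed family of submodules $N_F:=\bigoplus_{j\in F}(X_j^i)^\ast$ indexed by the finite subsets $F\subseteq J$. A directed union of submodules of a noetherian module is itself finitely generated and therefore already attained at some finite stage, so $\bigcup_F N_F = N_{F_0}$ for some finite $F_0\subseteq J$. Taking $F=F_0\cup\{j'\}$ for $j'\in J\setminus F_0$ then forces $(X_{j'}^i)^\ast=0$, and since $(-)^{\ast\ast}\cong\Id$ on finitely generated projective modules this is equivalent to $X_{j'}^i=0$. The only slightly delicate point I anticipate is verifying that the universal-property bijection respects the left $R$-module structure coming from $R[-i]$, but this is immediate from the identity $r\cdot(f\circ\iota_j)=(r\cdot f)\circ\iota_j$ on representatives, so the noetherian argument applies and the conclusion follows.
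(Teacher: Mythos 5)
Your proof is correct, but it takes a genuinely different route from the paper's. The paper argues on the chain level: it shows that each finite sub-coproduct $\bigoplus_{k=1}^n X_{j_k}\to\coprod_{j\in J}X_j$ is split injective in $\K(R)$ via the universal property, then invokes Yoshino's Lemma 5.6 to upgrade this to a split injection in the category of complexes (this step uses that the $X_j$ have zero differentials), so that the components $X_{j_k}^i$ embed as independent direct summands of $\bigl(\coprod_j X_j\bigr)^i$; an infinite family of nonzero components would then give a strictly ascending chain in that finitely generated right $R$-module. You instead corepresent: the natural isomorphism $\Hom_{\K(R)}(Y,R[-i])\cong\H^{-i}(Y^\ast)$ together with the universal property of the coproduct identifies $\prod_{j\in J}(X_j^i)^\ast$ with a subquotient of the finitely generated left $R$-module $(X^i)^\ast$, and a stabilization argument for the directed union $\bigoplus_{j\in F}(X_j^i)^\ast$ finishes the job, with $(-)^{\ast\ast}\cong\Id$ on $\proj R$ recovering $X_{j'}^i=0$ from $(X_{j'}^i)^\ast=0$. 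Your version buys independence from Lemma 5.6 of \cite{Yos} (you never need to realize the splitting at the level of $\C(R)$, only homotopy classes enter), and it serves the paper's stated purpose equally well since it nowhere uses commutativity of $R$; the paper's version is slightly more hands-on and stays entirely on the side of right modules, whereas yours passes through the left-module duals, so one should (as you do) check that the coproduct bijection is left $R$-linear. Both arguments hinge on the same noetherian finiteness of a single degree of the coproduct object.
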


\begin{proof}
We denote by $\{\alpha_{j^\prime}:X_{j^\prime}\to \coprod_{j\in J}X_j\}_{j^\prime \in J}$ the canonical morphisms.
First, we note that for any finite subset $J^\prime =\{j_1, j_2, \ldots, j_n\}$ of $J$ the morphism $(\alpha_{j_1}, \alpha_{j_2}, \ldots, \alpha_{j_n}):\bigoplus_{k=1}^n X_{j_k}\to \coprod_{j\in J}X_j$ in $\K(R)$ is split injective.
Indeed, by the universal property of coproducts, for any $j^\prime \in J$ there exists a morphism $\beta_{j^\prime}:\coprod_{j\in J}X_j \to X_{j^\prime}$ in $\K(R)$ such that $\beta_{j^\prime}\alpha_{j^\prime}=1_{X_{j^\prime}}$ and $\beta_{j^\prime}\alpha_{j^{\prime\prime}}=0$ for all $j^{\prime\prime}\in J\setminus \{j^\prime\}$.
The morphism ${}^\t\! (\beta_{j_1}, \beta_{j_2}, \ldots, \beta_{j_n}):\coprod_{j\in J}X_j\to \bigoplus_{k=1}^n X_{j_k}$ satisfies ${}^\t\! (\beta_{j_1}, \beta_{j_2}, \ldots, \beta_{j_n})(\alpha_{j_1}, \alpha_{j_2}, \ldots, \alpha_{j_n})=1_{X_{j_1}\oplus X_{j_2}\oplus\cdots\oplus X_{j_n}}$ in $\K(R)$, where ${}^\t\!(-)$ denotes the transpose of a matrix, and so the morphism $(\alpha_{j_1}, \alpha_{j_2}, \ldots, \alpha_{j_n})$ is split injective in $\K(R)$.
Also, since all differentials of $X_{j^\prime}$ are zero map for all $j^\prime \in J$, the morphism $(\alpha_{j_1}, \alpha_{j_2}, \ldots, \alpha_{j_n})$ is also split injective in $\C(R)$ by \cite[Lemma 5.6]{Yos}.
In particular, for any integer $i$ the $i$th component $(\alpha_{j_1}^i, \alpha_{j_2}^i, \ldots, \alpha_{j_n}^i)$ is injective.

Fix an integer $i$.
Assume that there exists an infinite subset $J^\prime$ of $J$ such that $X_{j^\prime}^i\ne 0$ for all $j^\prime \in J^\prime$.
Then we can take a sequence $\{j^\prime_k\}_{k=1}^{\infty}$ of elements of $J$ such that $X_{j_{k}}^i\ne0$ for all $k\ge1$.
By the injectivity of the homomorphism $(\alpha_{j_1}^i, \alpha_{j_2}^i, \ldots, \alpha_{j_k}^i) : \bigoplus_{l=1}^k X_{j_l}^i\to (\coprod_{j\in J}X_j)^i$, we can see that $\image(\alpha_{j_1}^i, \alpha_{j_2}^i, \ldots, \alpha_{j_k}^i)=\image\alpha_{j_1}^i \oplus \image\alpha_{j_2}^i\oplus\cdots\oplus\image\alpha_{j_k}^i$ and there exists an ascending chain
$$
0\subsetneq \image\alpha_{j_1}\subsetneq \image\alpha_{j_1}^i\oplus\image\alpha_{j_2}^i\subsetneq\cdots\subsetneq\image\alpha_{j_1}^i\oplus\image\alpha_{j_2}^i\oplus\cdots\oplus\image\alpha_{j_k}^i\subsetneq\cdots
$$
of submodules of $(\coprod_{j\in J}X_j)^i$.
This contradicts the fact that $(\coprod_{j\in J}X_j)^i$ is a finitely generated module over a noetherian ring.
Therefore, the module ${X_j}^i$ is zero except for finite elements of the set $J$.
\end{proof}

The next lemma follows from Theorem \ref{splitdef}, and it asserts that split complexes behave like projective objects for cohomologically ghost triangles.

\begin{lem}\label{speq}
Let $P\in\K(R)$. Then the following are equivalent.
\begin{enumerate}[\rm(1)]
   \item 
   $P\in\Add R$.
   \item 
   For any morphism $s:P\to Y$ in $\K(R)$ and any cohomologically surjective morphism $f:X\to Y$ in $\K(R)$ there exists an morphism $t:P\to X$ in $\K(R)$ such that $s=ft$ in $\K(R)$.
   \item 
   Any cohomologically ghost triangle $Z\to Z^\prime\to P\xrightarrow{s}Z[1]$ in $\K(R)$ splits, i.e., $s=0$ in $\K(R)$.
\end{enumerate}
\end{lem}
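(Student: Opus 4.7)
The plan is to prove the cycle of implications $(1)\Rightarrow(2)\Rightarrow(3)\Rightarrow(1)$, with the last implication being the main new input.

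For $(1)\Rightarrow(2)$, I will invoke Theorem~\ref{splitdef}: since $P$ is split, it is isomorphic in $\K(R)$ to a complex with zero differentials, so each $\H^i(P)$ is a finitely generated projective $R$-module, and the natural map $\Hom_{\K(R)}(P,-)\to\prod_i\Hom_R(\H^i(P),\H^i(-))$ is bijective. Given $s:P\to Y$ and cohomologically surjective $f:X\to Y$, the projectivity of each $\H^i(P)$ lets me lift $\H^i(s)$ through the surjection $\H^i(f)$ to some $\alpha_i:\H^i(P)\to\H^i(X)$; by bijectivity, the family $(\alpha_i)$ assembles into a morphism $t:P\to X$ in $\K(R)$, and then $\H^i(ft)=\H^i(s)$ for all $i$ forces $ft=s$ via the injectivity clause in Theorem~\ref{splitdef}(6).

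For $(2)\Rightarrow(3)$, I apply (2) directly: in a cohomologically ghost triangle $Z\to Z'\xrightarrow{g}P\xrightarrow{s}Z[1]$, the morphism $g$ is cohomologically surjective, so taking $s:=\id_P$ and $f:=g$ in condition (2) produces $t:P\to Z'$ with $gt=\id_P$ in $\K(R)$, which forces $s=0$.

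For $(3)\Rightarrow(1)$, my strategy is to construct a cohomologically ghost triangle $Z\to Q\to P\to Z[1]$ with $Q\in\Add R$; then (3) splits it, exhibiting $P$ as a direct summand of $Q$, and the closure of $\Add R$ under direct summands yields $P\in\Add R$. To build $Q$, for each $i\in\ZZ$ I choose, using the noetherian hypothesis on $R$, a surjection $\pi_i:Q_i\twoheadrightarrow\H^i(P)$ from a finitely generated projective $Q_i$, and, using projectivity of $Q_i$, I lift $\pi_i$ through the canonical surjection $Z^i(P)\twoheadrightarrow\H^i(P)$ to obtain $\tilde\pi_i:Q_i\to Z^i(P)\hookrightarrow P^i$. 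Taking $Q$ to be the complex with $Q^i=Q_i$ and all differentials zero gives a split complex, hence $Q\in\Add R$ by Theorem~\ref{splitdef}, and the family $(\tilde\pi_i)$ defines a chain map $\tilde\pi:Q\to P$ (since $d_P^i\tilde\pi_i=0$) whose $i$th cohomology is $\pi_i$, so $\tilde\pi$ is cohomologically surjective.

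The step I expect to require the most care is the construction of $Q$ in $(3)\Rightarrow(1)$, specifically the assertion $Q\in\Add R$ when $\H^i(P)\ne 0$ for infinitely many $i$. There is no conflict with Proposition~\ref{infsum} because in each degree $i$ only the single stalk $Q_i[-i]$ contributes, so the componentwise sum is genuinely $Q_i$ and the coproduct $\bigoplus_i Q_i[-i]$ exists in $\K(R)$; this is the point that deserves to be made explicit. All remaining verifications — that $\tilde\pi$ is a chain map, that its cohomology is $\pi_\bullet$, and the final reduction via (3) and summand-closure of $\Add R$ — are routine.
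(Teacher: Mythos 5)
Your proof is correct, and two of the three implications match the paper's argument: $(1)\Rightarrow(2)$ is proved exactly as in the paper (lift $\H^i(s)$ through the surjection $\H^i(f)$ using projectivity of $\H^i(P)$, then assemble and compare via Theorem~\ref{splitdef}(5)(6)), and $(2)\Rightarrow(3)$ is the same observation the paper dismisses as trivial, spelled out via a section of the cohomologically surjective map $Z'\to P$. Where you genuinely diverge is $(3)\Rightarrow(1)$. The paper verifies criterion (6) of Theorem~\ref{splitdef} directly: given $s\colon P\to Y$ with $\H^{\bullet}(s)=0$, the rotated cone triangle $Y[-1]\to(\Cone s)[-1]\to P\xrightarrow{s}Y$ is cohomologically ghost, so (3) forces $s=0$; this handles all $Y$ at once and needs no construction. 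You instead build a right $\Add R$-approximation $\tilde\pi\colon Q\to P$ (the stalkwise construction of Remark~\ref{syzcos}(2)), observe that its cone triangle is cohomologically ghost, split it by (3), and conclude $P\in\Add R$ from summand-closure in Definition~\ref{AddRdef}. Both arguments are sound; the paper's is shorter because it leans on the full strength of the equivalence $(1)\Leftrightarrow(6)$ in Theorem~\ref{splitdef}, whereas yours only uses the easy criterion (3) of that theorem plus the definition of $\Add R$, at the cost of the explicit approximation. Your side remark about Proposition~\ref{infsum} is also correctly resolved: $Q$ is a single complex with zero differentials and finitely generated components, not an infinite coproduct of stalks in a fixed degree, so it lies in $\Add R$ outright.
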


\begin{proof}
The implication $(2) \Rightarrow (3)$ is trivial.

$(1)\Rightarrow(2)$ : Let $s:P\to Y$ be a morphism in $\K(R)$ and $f:X\to Y$ a cohomologically surjective morphism in $\K(R)$. 
Let $i$ be an integer.
Since $\H^i(f):\H^i(X)\to \H^i(Y)$ is surjective and $\H^i(P)$ is projective, 
there exists a homomorphism $\widetilde{t}^i:\H^i(P)\to \H^i(X)$ of $R$-modules such that the diagram
$$
\xymatrix@R-1pc@C-1pc{
&&\H^i(P)\ar[lldd]_-{\widetilde{t}^i}\ar[dd]^-{\H^i(s)}\\
&&\\
\H^i(X)\ar[rr]^-{\H^i(f)}&&\H^i(Y)
}
$$
commutes.
Since $P\in\Add R$, there exists a morphism $t:P\to X$ in $\K(R)$ such that $\widetilde{t}^i=\H^i(t)$ for all integers $i$ by Theorem \ref{splitdef}.
As the equality $\H^i(ft)=\H^i(s)$ holds for all integers $i$, by Theorem \ref{splitdef} again, we have $ft=s$.

$(3)\Rightarrow(1)$ : By Theorem \ref{splitdef}, it is enough to prove that the natural morphism $H : \Hom_{\K(R)}(X,Y)\to\prod_{i\in\ZZ}\Hom_R(\H^i(X), \H^i(Y))$ is injective for all $Y\in\K(R)$.
Let $s:P\to Y$ be a morphism in $\K(R)$ such that $H(s)=0$.
By the assumption, the triangle $Y[-1]\to (\Cone s)[-1]\to P\xrightarrow{s}Y$ in $\K(R)$ splits, i.e., $s=0$ in $\K(R)$.
We are done.
\end{proof}

The (projective) stabilization $\underline{\mod R}$ of $\mod R$ is defined by the ideal quotient of $\mod R$ by the category $\proj R$ of finitely generated projective modules.
As an analogue, the stabilization $\underline{\K(R)}$ of $\K(R)$ is defined by the ideal quotient of $\K(R)$ by the category $\Add R$ of split complexes.

\begin{dfn}\cite[Definition 6.1]{Yos}\label{Kvar}
We denote by $\underline{\K(R)}$ the ideal quotient of $\K(R)$ by $\Add R$.
The objects of $\underline{\K(R)}$ are the same as those of $\K(R)$.
The morphism set of objects $X, Y$ of $\K(R)$ is defined by
$$
\Hom_{\underline{\K(R)}}(X,Y)=\Hom_{\K(R)}(X,Y)/{\mathcal{A}(X,Y)},
$$
where $\mathcal{A}(X,Y)$ is the set of morphisms in $\K(R)$ factoring through some split complexes.
Also, we say that objects $X$ and $Y$ of $\K(R)$ are {\em stably isomorphic} if $X$ and $Y$ are isomorhic as objects of $\underline{\K(R)}$, and then write $X\approx Y$.
\end{dfn}

For a finitely generated $R$-module $M$, the kernel of a right $\proj R$-approximation $P\to M$ (i.e., a surjective $R$-homomorphism $P\to M$ with $P\in\proj R$) is called the syzygy of $M$.
For a complex of finitely generated projective $R$-module X, the cocone of right $\Add R$-approximation is called the syzygy of $X$.
The cosyzygy of $X$ is defined similarly.

\begin{dfn}\cite[Definitions 7.1, 7.3, 7.4 and 7.8]{Yos}\label{defsyzcosyz}
Let $X\in\K(R)$.
\begin{enumerate}[\rm(1)]
   \item
   A morphism $p:P\to X$ (resp. $q:X\to Q$) in $\K(R)$ is called a {\em right $\Add R$-approximation} (resp. {\em left $\Add R$-approximation }) of $X$ if $P\in\Add R$ (resp. $Q\in\Add R$) and the map $\Hom_{\K(R)}(W,p):\Hom_{\K(R)}(W,P)\to\Hom_{\K(R)}(W, X)$ (resp. $\Hom_{\K(R)}(q,W):\Hom_{\K(R)}(Q,W)\to\Hom_{\K(R)}(X,Q)$) is surjective for all $W\in \Add R$.
   \item
   Let $p:P\to X$ (resp. $q:X\to Q$) in $\K(R)$ be a right (resp. left) $\Add R$-approximation of $X$.
The {\em syzygy} $\syz X$ (resp. {\em cosyzygy} $\syz^{-}X$) of $X$ is defined as $\Cone(p)[-1]$ (resp. $\Cone(q)$). 
Then taking the syzygy (resp. cosyzygy) induces an additive functor $\syz:\underline{\K(R)}\to\underline{\K(R)}$ (resp. $\syz^{-}:\underline{\K(R)}\to\underline{\K(R)}$).
For an integer $n>0$, we define inductively $\syz^n=\syz\circ\syz^{n-1}$ (resp. $\syz^{-n}=\syz^{-}\circ\syz^{-n+1}$).
\end{enumerate}
\end{dfn}

\begin{rmk}\label{syzcos}
Let $X\in\K(R)$.
\begin{enumerate}[\rm(1)]
   \item\cite[Lemmas 7.2 and 7.5]{Yos}
   Let $P$, $Q\in\Add R$.
Then a morphism $p:P\to X$ in $\K(R)$ is a right $\Add R$-approximation of $X$ if and only if it is cohomologically surjective.
Also, a morphism $q:X\to Q$ in $\K(R)$ is a left $\Add R$- approximation of $X$ if and only if its $R$-dual $q^\ast: Q^\ast \to X^\ast$ is cohomologically surjective.
\item
Take a surjective morphism $s^i:P^i\to \H^i(X)$ of $R$-modules with $P^i$ projective for all integers $i$. Then there exists a morphism $\tilde{p}^i:P^i\to Z^i(X)$ such that $\pi^i_{ZH}\tilde{p}^i=s^i$ for all integers $i$, where $\pi^i_{ZH}:Z^i(X)\to \H^i(X)$ is the natural surjection.
We consider the composite $p^i=\iota_{ZX}^i\tilde{p}^i:P^i\to X^i$, where $\iota^i_{ZX}:Z^i(X)\to X^i$ is the natural injection. The morphism of complexes
$$
\xymatrix@R-1pc@C-1pc{
P\ar[dd]^{p}&=&(&\cdots\ar[r]^-{0}&P^{i-1}\ar[r]^-{0}\ar[dd]^-{p^{i-1}}&P^i\ar[r]^-{0}\ar[dd]^-{p^{i}}&P^{i+1}\ar[r]^-{0}\ar[dd]^-{p^{i+1}}&\cdots&)\\
&&&&&&&&\\
X&=&(&\cdots\ar[r]&X^{i-1}\ar[r]^-{d_X^{i-1}}&X^i\ar[r]^-{d_X^i}&X^{i+1}\ar[r]&\cdots&)
}
$$
is a right $\Add R$-approximation of $X$ since it is cohomologically surjective. Therefore, the syzygy $\syz X$ of X can be computed as
$$
\syz X=(\cdots\to P^{i-1}\oplus X^{i-2}\xrightarrow{ \begin{pmatrix}0& 0 \\
p^{i-1} & d_X^{i-2} \end{pmatrix}}P^i\oplus X^{i-1}\xrightarrow{ \begin{pmatrix}0& 0 \\ p^{i} & d_X^{i-1} \end{pmatrix}}P^{i+1}\oplus X^i \to\cdots).
$$
Also, noting that $\syz^{-}X=(\syz(X^\ast))^\ast$, we can compute the cosyzygy $\syz^{-}X$ similarly.
\end{enumerate}
\end{rmk}

The syzygy for complexes defined above commutes with the cohomology $H(-)$ and the cokernel $C(-)$.

\begin{lem}\label{Csyz}
Let $X\in\K(R)$ and $i$ be an integer.
Then one has stable isomorphisms $\H^i(\syz X)\approx\syz \H^i(X)$, $C^i(\syz X)\approx\syz C^i(X)$.
\end{lem}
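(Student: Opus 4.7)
The plan is to compute $\H^i(\syz X)$ and $C^i(\syz X)$ explicitly from the formula for $\syz X$ given in Remark~\ref{syzcos}(2): the complex $\syz X$ has $i$-th component $P^i \oplus X^{i-1}$ with differential $\bigl(\begin{smallmatrix} 0 & 0 \\ p^i & d_X^{i-1} \end{smallmatrix}\bigr)$, where $p^i = \iota^i_{ZX}\tilde{p}^i$ factors through $Z^i(X)$ and induces a surjection $s^i = \pi^i_{ZH}\tilde{p}^i \colon P^i \twoheadrightarrow \H^i(X)$.

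First, I would establish the key identity
$$\image(p^{i-1}) + B^{i-1}(X) = Z^{i-1}(X)$$
inside $X^{i-1}$. The inclusion $\subseteq$ is immediate from $\image(p^{i-1}) \subseteq Z^{i-1}(X)$, while the reverse inclusion follows because modding $\image(p^{i-1})$ by $B^{i-1}(X)$ inside $Z^{i-1}(X)$ recovers the image of the surjection $s^{i-1}$, namely all of $\H^{i-1}(X)$. Once this is in hand, it is immediate that $B^i(\syz X) = \{0\} \oplus Z^{i-1}(X)$.

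For $\H^i(\syz X)$, I would then analyze the first-coordinate projection $\phi \colon Z^i(\syz X) \to P^i$ sending $(a,b) \mapsto a$. Its kernel is $\{(0,b) : d_X^{i-1}(b)=0\} = B^i(\syz X)$, and a pair $(a,b)$ lies in $Z^i(\syz X)$ if and only if $p^i(a) = -d_X^{i-1}(b) \in B^i(X)$, so that $\image(\phi) = (p^i)^{-1}(B^i(X)) = \ker(s^i)$. This yields $\H^i(\syz X) \cong \ker(s^i)$, and since $s^i$ is a surjection onto $\H^i(X)$ from the finitely generated projective module $P^i$, its kernel is by definition a syzygy of $\H^i(X)$, so $\H^i(\syz X) \approx \syz \H^i(X)$.

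For $C^i(\syz X)$, I would combine $B^i(\syz X) = \{0\} \oplus Z^{i-1}(X)$ with the canonical isomorphism $X^{i-1}/Z^{i-1}(X) \cong B^i(X)$ induced by $d_X^{i-1}$ to conclude $C^i(\syz X) \cong P^i \oplus B^i(X)$. By Remark~\ref{boundry}(1), $B^i(X) \approx \syz C^i(X)$, and absorbing the projective summand $P^i$ modulo $\Add R$ yields the desired stable isomorphism $C^i(\syz X) \approx \syz C^i(X)$. The only mildly delicate point will be verifying the identity $\image(p^{i-1}) + B^{i-1}(X) = Z^{i-1}(X)$; once that is in place, both assertions reduce to routine bookkeeping in the quotient.
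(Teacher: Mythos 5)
Your proposal is correct, and for the cokernel assertion it is essentially identical to the paper's argument: the paper also observes that the image of $(p^{i-1},d_X^{i-2})\colon P^{i-1}\oplus X^{i-2}\to X^{i-1}$ is exactly $Z^{i-1}(X)$ (your identity $\image(p^{i-1})+B^{i-1}(X)=Z^{i-1}(X)$) and then reads off $C^i(\syz X)\cong P^i\oplus B^i(X)\approx\syz C^i(X)$. Where you diverge is the cohomology assertion: the paper does not compute cycles and boundaries of $\syz X$ at all, but instead uses the triangle $\syz X\to P\xrightarrow{p}X\to\syz X[1]$ with $p$ cohomologically surjective, whose long exact cohomology sequence degenerates to a short exact sequence $0\to\H^{\bullet}(\syz X)\to\H^{\bullet}(P)\to\H^{\bullet}(X)\to0$ with $\H^{\bullet}(P)$ projective, giving $\H^{\bullet}(\syz X)\approx\syz\H^{\bullet}(X)$ in one line. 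Your hands-on identification $\H^i(\syz X)\cong\ker(s^i)$ via the projection $\phi$ is a valid substitute and has the minor virtue of being self-contained (it does not invoke the cohomology long exact sequence of a triangle), at the cost of more bookkeeping; the signs coming from the cone and shift conventions are harmless here since they do not affect kernels and images. Both routes are sound.
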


\begin{proof}
We use the same notation as Remark \ref{syzcos}(2). 
Since there exists a triangle $\syz X\to P\xrightarrow{p}X\to\syz X[1]$ with $p$ cohomologically surjective, we have the exact sequence
$$
0\to \H^{\bullet}(\syz X)\to \H^{\bullet}(P)\to \H^{\bullet}(X)\to0.
$$
As $\H^{\bullet}(P)$ is projective, one has $\H^{\bullet}(\syz X)\approx \syz \H^{\bullet}(X)$.

Let $i$ be an integer.
As it is seen is Remark \ref{syzcos}(2), the module $C^i(X)$ is isomorphic to the cokernel of the morphism $\begin{pmatrix}0& 0 \\ p^{i-1} & d_X^{i-2}\end{pmatrix}:P^{i-1}\oplus X^{i-2}\to P^i\oplus X^{i-1}$.
It is easy to see that the image of $(p^{i-1}, d_X^{i-2}):F^{i-1}\oplus X^{i-2}\to X^{i-1}$ coincides with $Z^{i-1}(X)$.
We have
$$
C^i(\syz X)\cong \Cok\left(\begin{pmatrix}0& 0 \\ p^{i-1} & d_X^{i-2}\end{pmatrix}\right)\cong P^i\oplus (X^{i-1}/Z^{i-1}(X))\cong P^{i}\oplus B^i(X)\approx \syz C^i(X).
$$
\end{proof}

Let $X\in\K(R)$ and $i$ be an integer.
We denote by $\gamma^i_X$ the natural map $\H^{-i}(X)\to \H^{i}(X^\ast)^\ast$ given by $\gamma_X^i(\overline{x})(\overline{f})=f(x)$ for $\overline{x}\in \H^{-i}(X)$ and $\overline{f}\in \H^{i}(X^\ast)$.
Note that the homomorphism $\gamma_{X}^i$ coincides with $\rho_{X^{\ast}, R}^i$ in Corollary \ref{fundex}.

In \cite{Yos}, a complex $X\in\K(R)$ is called {\em ${}^\ast$torsionfree} if the natural map $\rho_X^i:=\rho_{X, R}^i : \H^{-i}(X^\ast)\to \H^i(X)^\ast$ is injective for all integers $i$ and called {\em ${}^\ast$reflexive} if $\rho_{X, R}^i$ is bijective for all $i$.
Here, we introduce torsionlessness and reflexivity for complexes by the injectivity and bijectivity of the natural map $\gamma^{\bullet}_X$ as the dual notions of the ${}^\ast$torsionfreeness and ${}^\ast$reflexive.
These notions are natural analogies of the concepts of torsionless modules and reflexive modules; see Propositions \ref{trleq} and \ref{refeq}.

\begin{dfn}
Let $X\in\K(R)$.
\begin{enumerate}
   \item
   The complex $X$ is {\em torsionless} if the natural map $\gamma^i_X: \H^{-i}(X)\to \H^i(X^\ast)^\ast$ is injective for all integers $i$.
   \item
   The complex $X$ is {\em torsionless} if the natural map $\gamma^i_X: \H^{-i}(X)\to \H^i(X^\ast)^\ast$ is injective for all integers $i$.
\end{enumerate}
\end{dfn}

Let $M$ be a finitely generated $R$-module.
It is well-known that $M$ is torsionless if and only if there exists an exact sequence $0\to M\to P\to M_1\to 0$ of finitely generated $R$-modules with $P\in\proj R$ such that the $R$-dual $0\to M^\ast_1 \to P^\ast \to M^\ast \to 0$ is also exact.
The following proposition states that the torsionlessness of complexes of finitely generated projective $R$-modules is equivalent to the existence of a similar cohomologically ghost triangle.

\begin{prop}\label{trleq}
Let $X\in \K(R)$. The following are equivalent.
\begin{enumerate}[\rm(1)]
   \item
   The complex $X$ is torsionless.
   \item 
   There exists a cohomologically ghost triangle $X\xrightarrow{q} P\xrightarrow{r} X_1\to X[1]$ in $\K(R)$ with $P\in\Add R$ such that the $R$-dual triangle ${X_1}^\ast \xrightarrow{r^\ast} P^\ast \xrightarrow{q^\ast} X^\ast \to {X_1}^\ast[1]$ is also cohomologically ghost.
   \item
   There exists a cohomologically ghost triangle $X\xrightarrow{q} P\xrightarrow{r} X_1\to X[1]$ in $\K(R)$ with $P\in\Add R$.
   \item
   The complex $X$ is a first syzygy, i.e. $X\approx\syz X_1$ for some $X_1\in \K(R)$.
\end{enumerate}
\end{prop}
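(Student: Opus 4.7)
The plan is to prove the cycle $(1)\Rightarrow(2)\Rightarrow(3)\Leftrightarrow(4)\Rightarrow(1)$, where $(2)\Rightarrow(3)$ is a free implication.

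For $(3)\Leftrightarrow(4)$, I observe that in the triangle of $(3)$, the cohomological surjectivity of $P\to X_1$ (built into the cohomologically ghost condition) combined with $P\in\Add R$ means, by Remark \ref{syzcos}(1), that $P\to X_1$ is a right $\Add R$-approximation of $X_1$. Hence $X\approx\Cone(P\to X_1)[-1]=\syz X_1$. The converse is immediate: any right $\Add R$-approximation of $X_1$ yields such a triangle, cohomological ghostness following again from Remark \ref{syzcos}(1).

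For $(3)\Rightarrow(1)$, I would argue that $C^i(X)$ is a torsionless module for every $i$ and then pass back to $X$ via the Auslander--Bridger sequence. From $(3)$ we know $X\approx\syz X_1$, and Lemma \ref{Csyz} gives $C^i(X)\approx \syz C^i(X_1)$, which is a submodule of a projective, hence torsionless. Applying Corollary \ref{fundex}(1) to $X^\ast\in\K(R^{\mathrm{op}})$ with $M=R$, and using $X^{\ast\ast}\cong X$ together with Remark \ref{boundry}(2) giving $C^{-i+1}(X^\ast)=\tr C^i(X)$, we get the exact sequence
$$
0\to\Ext^1_{R^{\mathrm{op}}}(\tr C^i(X),R)\to \H^{-i}(X)\xrightarrow{\gamma_X^i}\H^i(X^\ast)^\ast.
$$
Since $C^i(X)$ is torsionless, i.e.\ $1$-torsionfree, the leftmost term vanishes, so $\gamma_X^i$ is injective.

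The main work lies in $(1)\Rightarrow(2)$, where I will construct $q:X\to P$ by hand. As above, torsionlessness of $X$ forces each $C^i(X)$ to be torsionless. For each $i$, choose a surjection $\pi^i:R^{n_i}\twoheadrightarrow C^i(X)^\ast$ of $R^{\mathrm{op}}$-modules; dualizing and composing with the (injective) evaluation $\varphi_{C^i(X)}$ produces an injection $\beta^i:C^i(X)\hookrightarrow R^{n_i}$ whose $R$-dual $(\beta^i)^\ast$ unwinds to $\pi^i$ and is therefore surjective. Let $P\in\Add R$ be the split complex with $P^i=R^{n_i}$ and zero differentials (allowed by Theorem \ref{splitdef}), and define $q^i:X^i\twoheadrightarrow C^i(X)\xrightarrow{\beta^i}R^{n_i}$. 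Since each $q^{i+1}$ vanishes on $B^{i+1}(X)=\im d_X^i$, this is a morphism of complexes. Cohomological injectivity of $q$ then follows from $\H^i(X)\hookrightarrow C^i(X)$ composed with the injective $\beta^i$, while cohomological surjectivity of $q^\ast$ follows from the identification $Z^{-i}(X^\ast)=C^i(X)^\ast$ together with the surjectivity of $(\beta^i)^\ast=\pi^i$, which makes $\H^{-i}(P^\ast)=R^{n_i}\twoheadrightarrow Z^{-i}(X^\ast)\twoheadrightarrow\H^{-i}(X^\ast)$. Taking the triangle $X\xrightarrow{q}P\to\Cone(q)\to X[1]$ then yields $(2)$, with cohomological ghostness of both the triangle and its $R$-dual guaranteed by the two verifications. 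The main obstacle is pinning down the dual of the evaluation-based embedding $\beta^i$ and confirming that the identification $Z^{-i}(X^\ast)=C^i(X)^\ast$ propagates surjectivity onto $\H^{-i}(X^\ast)$ modulo boundaries; once these bookkeeping steps are in place, the rest is formal.
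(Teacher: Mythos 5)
Your proposal is correct, but the two substantive implications are handled by a different mechanism than the paper's. For $(1)\Rightarrow(2)$ the paper works entirely at the level of the duality: it takes a right $\Add(R^{\mathrm{op}})$-approximation $p:Q\to X^\ast$, notes that $p$ is cohomologically surjective, and uses the naturality square $\gamma^{\bullet}_{Q^\ast}\circ\H^{-\bullet}(p^\ast)=\H^{\bullet}(p)^\ast\circ\gamma^{\bullet}_X$ together with the injectivity of $\gamma^{\bullet}_X$ to conclude that $p^\ast:X\to Q^\ast$ is cohomologically injective; the desired triangle is then $X\xrightarrow{p^\ast}Q^\ast\to\Cone(p^\ast)\to X[1]$, its dual being ghost because $p^{\ast\ast}=p$ is cohomologically surjective. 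You instead descend to the module level: each $C^i(X)$ is torsionless, you run the classical embedding of a torsionless module into a free module with surjective dual, and you assemble these componentwise into a cohomologically injective map into a zero-differential split complex. Similarly, for $(4)\Rightarrow(1)$ the paper again uses the naturality square for $\gamma$ (injectivity of $\H^{-\bullet}(q)$ and of $\gamma^{\bullet}_P$ gives injectivity of $\gamma^{\bullet}_X$), whereas you pass through $C^i(\syz X_1)\approx\syz C^i(X_1)$ and the exact sequence of Corollary \ref{fundex}(1). Both routes are valid and non-circular (the exact sequence you invoke is established in Section 3, before this proposition). The paper's argument is shorter and stays inside $\K(R)$; yours is more explicit, makes the analogy with the module-theoretic statement transparent, and in effect re-proves the content of Corollary \ref{torref}(1) along the way. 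The extra bookkeeping you flag (that $(\beta^i)^\ast$ is surjective via $\varphi_M^\ast\circ\varphi_{M^\ast}=1_{M^\ast}$, and that $Z^{-i}(X^\ast)\cong C^i(X)^\ast$ carries the surjectivity onto $\H^{-i}(X^\ast)$) does go through, so there is no gap.
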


\begin{proof}
The implications $(2)\Rightarrow(3)\Leftrightarrow(4)$ clearly hold.
We assume that the condition (4) holds.
Then there exists a commutative diagram
$$
\xymatrix@R-1pc@C-1pc{
\H^{-\bullet}(X)\ar[dd]^-{\gamma^{\bullet}_X}\ar[rr]^-{\H^{-\bullet}(q)}&&\H^{-\bullet}(P)\ar[dd]^-{\gamma^{\bullet}_P}\\
&&&\\
\H^{\bullet}(X^\ast)^\ast\ar[rr]^-{\H^{\bullet}(q^\ast)^\ast} &&\H^{\bullet}(P^\ast)^\ast .
}
$$
Since $\H^{-\bullet}(q)$ and $\gamma_P^{\bullet}$ are injective, so is $\gamma_X^{\bullet}$.
Hence $X$ is torsionless and the implication $(3)\Rightarrow(1)$ holds.
We prove the implication $(1)\Rightarrow(2)$. Take a right $\Add(R^{\mathrm{op}})$-approximation $p:Q\to X^\ast$ of $X^\ast$.
Then it is cohomologically surjective by Remark \ref{syzcos}(1), and there exists a commutative diagram
$$
\xymatrix@R-1pc@C-1pc{
\H^{-\bullet}(X)\ar[dd]^-{\gamma^{\bullet}_X}\ar[rr]^-{\H^{-\bullet}(p^\ast)}&&\H^{-\bullet}(Q^\ast)\ar[dd]^-{\gamma^{\bullet}_{Q^\ast}}\\
&&&\\
\H^{\bullet}(X^\ast)^\ast\ar[rr]^-{\H^{\bullet}(p)^\ast} &&\H^{\bullet}(Q)^\ast .
}
$$
By the assumption (2), $\gamma^{\bullet}_X$ is injective.
Also, as $p$ is cohomologically surjective, $\H^{\bullet}(p)^\ast$ is injective.
We conclude that $\H^{\bullet}(p^\ast)$ is injective, that is, $p^\ast : X\cong X^{\ast\ast}\to Q^\ast$ is cohomologically injective.
So we obtain the desired triangle $X\xrightarrow{p^\ast}Q^\ast \to \Cone(p^\ast)\to X[1]$.
\end{proof}

We make a similar argument for the reflexivity of objects of $\K(R)$.

\begin{prop}\label{refeq}
Let $X\in\K(R)$.
The following are equivalent.
\begin{enumerate}[\rm(1)]
   \item
   The complex $X$ is reflexive.
   \item
   There exist two cohomologically ghost triangles $X\to P_0\to X_1\to X[1]$ and $X_1\to P_1\to X_2\to X_1[1]$ in $\K(R)$ with $P_0$, $P_1\in\Add R$ such that the $R$-dual triangles $X_1^\ast \to P_0^\ast \to X^\ast \to X_1^\ast [1]$ and $X_2^\ast \to P_1^\ast \to X_1^\ast \to X_2^\ast [1]$ are also cohomologically ghost.
\end{enumerate}
\end{prop}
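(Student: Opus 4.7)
The plan is to reduce both implications to Proposition~\ref{trleq} together with a naturality chase for the map $\gamma^\bullet$. The backbone of either direction is the following observation: for every $P\in\Add R$ the map $\gamma^i_P:\H^{-i}(P)\to\H^i(P^\ast)^\ast$ is an isomorphism, since by Theorem~\ref{splitdef} we may represent $P$ by a complex with zero differentials, in which case $\gamma^i_P$ reduces to the ordinary evaluation map $\varphi_{P^{-i}}:P^{-i}\to(P^{-i})^{\ast\ast}$ of the finitely generated projective module $P^{-i}$.

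For $(2)\Rightarrow(1)$, the cohomologically ghost hypothesis on the two given triangles and on their $R$-duals yields four short exact sequences of cohomology modules. Splicing the two coming from the original triangles produces a four-term exact sequence
\[
0\to\H^{-i}(X)\to\H^{-i}(P_0)\to\H^{-i}(P_1),
\]
while splicing the two coming from the dual triangles and then applying $\Hom_R(-,R)$ (which is left exact) yields
\[
0\to\H^i(X^\ast)^\ast\to\H^i(P_0^\ast)^\ast\to\H^i(P_1^\ast)^\ast.
\]
Naturality of $\gamma^\bullet$ assembles these into a commutative ladder with vertical arrows $\gamma^i_X,\gamma^i_{P_0},\gamma^i_{P_1}$; since the last two are isomorphisms, a four-lemma diagram chase forces $\gamma^i_X$ to be an isomorphism for every $i$, i.e.\ $X$ is reflexive.

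For $(1)\Rightarrow(2)$, reflexivity in particular implies torsionlessness of $X$, so Proposition~\ref{trleq} supplies a cohomologically ghost triangle $X\to P_0\to X_1\to X[1]$ with $P_0\in\Add R$ whose $R$-dual is also cohomologically ghost. It then suffices to prove that $X_1$ is again torsionless, for Proposition~\ref{trleq} applied to $X_1$ then delivers the second required triangle $X_1\to P_1\to X_2\to X_1[1]$ together with its dual-triangle condition. To verify that $\gamma^i_{X_1}$ is injective, I would consider the commutative diagram whose top row is the short exact sequence $0\to\H^{-i}(X)\to\H^{-i}(P_0)\to\H^{-i}(X_1)\to 0$ coming from the first cohomologically ghost triangle, and whose bottom row is the left-exact sequence $0\to\H^i(X^\ast)^\ast\to\H^i(P_0^\ast)^\ast\to\H^i(X_1^\ast)^\ast$ obtained by dualizing the cohomology sequence of the dual triangle, linked by the vertical maps $\gamma^i_X,\gamma^i_{P_0},\gamma^i_{X_1}$. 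Since both $\gamma^i_X$ and $\gamma^i_{P_0}$ are isomorphisms, a short four-lemma chase forces $\gamma^i_{X_1}$ to be injective for every $i$.

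The only real obstacle is bookkeeping: one must check that $\gamma^\bullet$ is natural with respect to the morphisms of cohomology arising from the given triangles and their $R$-duals, and track the arrow reversals induced by $R$-duality so that the two rows of each ladder align. Once the definition of $\gamma^i$ is spelled out on cohomology representatives these checks are immediate, and the proof is completed by the two four-lemma diagram chases described above.
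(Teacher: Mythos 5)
Your proof is correct and follows essentially the same route as the paper: both directions reduce to Proposition~\ref{trleq} plus a naturality ladder for $\gamma^\bullet$ with $\gamma^\bullet_{P}$ invertible for split $P$, and your $(1)\Rightarrow(2)$ argument (show $X_1$ is torsionless via a diagram chase, then reapply Proposition~\ref{trleq}) is exactly the paper's. The only cosmetic difference is in $(2)\Rightarrow(1)$, where you splice the two triangles and run a four-lemma against $\gamma^\bullet_{P_0},\gamma^\bullet_{P_1}$, whereas the paper first deduces injectivity of $\gamma^\bullet_{X_1}$ from the second triangle and then chases the three-column ladder of the first; both chases are valid.
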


\begin{proof}
We prove the implication $(1)\Rightarrow(2)$.
By Proposition \ref{trleq}, there exists a cohomologically ghost triangle $X\to P_0\to X_1\to X[1]$ such that the $R$-dual is also cohomologically ghost.
We get a commutative diagram
\begin{equation}\label{33}
\xymatrix@R-1pc@C-1pc{
0\ar[rr]&&\H^{-\bullet}(X)\ar[rr]\ar[dd]^-{\gamma^{\bullet}_{X}}&&\H^{-\bullet}(P_0)\ar[rr]\ar[dd]^-{\gamma^{\bullet}_{P_0}}&&\H^{-\bullet}(X_1)\ar[rr]\ar[dd]^-{\gamma^{\bullet}_{X_1}}&&0\\
&&&&&&&&\\
0\ar[rr]&&\H^{\bullet}(X^\ast)^\ast\ar[rr]&&\H^{\bullet}(P_0^\ast)^\ast\ar[rr]&&\H^{\bullet}(X_1^\ast)^\ast
}
\end{equation}
with exact rows.
Since $\gamma^{\bullet}_X$ and $\gamma^{\bullet}_{P_0}$ are bijective, $\gamma^{\bullet}_{X_1}$ is injective, i.e., $X_1$ is torsionless.
Applying Proposition \ref{trleq} to $X_1$, we obtain the desired triangles.
Next, we show the converse. Assume the condition $(2)$ holds true.
Then, there exists a diagram similar to (\ref{33}) for the triangle $X\to P_0\to X_1\to X[1]$ obtained from the condition (3).
Then $\gamma^{\bullet}_{P_0}$ is bijective, and $\gamma^{\bullet}_{X_1}$ is injective by the assumption (2) and Proposition \ref{trleq}.
Hence $\gamma^{\bullet}_X$ is bijective, i.e., $X$ is reflexive. We are done.
\end{proof}

\section{Higher torsionfreeness, Gorenstein projective complexes and approximation theory}

In the previous section, we give the definitions of the notions of torsionless complexes and reflexive complexes.
In this section, we introduce the notion of $n$-torsionfree complexes as a higher version of these notions.
We say that an object $X$ of $\K(R)$ is Gorenstein projective if $X$ and $X^\ast$ are $\infty$-torsionfree.
As an analogue of the Gorenstein dimension for finitely generated modules, we define the Gorenstein dimension for complexes and prove approximation theorems for $n$-torsionfree complexes and complexes of finite Gorenstein dimension.
Our main tool is the following long exact sequence obtained in Section 3.

\begin{cor}\label{starex}
Let $X\in \K(R)$ and $i$ be an integer.
There exists a long exact sequence
$$
\xymatrix@R-1pc@C-1pc{
0\ar[r]&\Ext^1_{R^{\mathrm{op}}}(\tr C^{-i}(X), R)\ar[r]&\H^{-i}(X)\ar[r]^-{\gamma^i_X}&\H^i(X^\ast)^\ast\\
\ar[r]&\Ext^2_{R^{\mathrm{op}}}(\tr C^{-i}(X), R)\ar[r]&\Ext^1_{R^{\mathrm{op}}}(\tr C^{-i+1}(X), R)\ar[r]&\Ext^1_{R^{\mathrm{op}}}(\H^i(X^\ast),R)\\
\ar[r]&\Ext^3_{R^{\mathrm{op}}}(\tr C^{-i}(X), R)\ar[r]&\Ext^2_{R^{\mathrm{op}}}(\tr C^{-i+1}(X), R)\ar[r]&\Ext^2_{R^{\mathrm{op}}}(\H^i(X^\ast),R)\ar[r]&\cdots.
}
$$
\end{cor}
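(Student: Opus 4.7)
The plan is to reduce Corollary \ref{starex} directly to Corollary \ref{fundex}(1), applied to the $R$-dual complex $X^\ast$ rather than to $X$ itself. Since $X\in\K(\proj R)$, each component of $X^\ast$ is a finitely generated projective $R^{\mathrm{op}}$-module, so $X^\ast$ is an object of $\K(\proj R^{\mathrm{op}})$ to which Corollary \ref{fundex}(1) (with $R$ replaced by $R^{\mathrm{op}}$ and $M=R$) applies. This produces, for every integer $j$, a long exact sequence
$$
\xymatrix@R-1pc@C-1pc{
0\ar[r]&\Ext^1_{R^{\mathrm{op}}}(C^{j+1}(X^\ast),R)\ar[r]&\H^{-j}(\Hom_{R^{\mathrm{op}}}(X^\ast,R))\ar[r]^-{\rho^j_{X^\ast,R}}&\Hom_{R^{\mathrm{op}}}(\H^j(X^\ast),R)\\
\ar[r]&\Ext^2_{R^{\mathrm{op}}}(C^{j+1}(X^\ast),R)\ar[r]&\Ext^1_{R^{\mathrm{op}}}(C^j(X^\ast),R)\ar[r]&\Ext^1_{R^{\mathrm{op}}}(\H^j(X^\ast),R)\ar[r]&\cdots .
}
$$

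Next I would rewrite the terms of this sequence using the standard identifications available in the excerpt. Since each $X^i$ is finitely generated projective, the biduality map $X\to X^{\ast\ast}$ is an isomorphism of complexes, so $\H^{-j}(\Hom_{R^{\mathrm{op}}}(X^\ast,R))=\H^{-j}(X^{\ast\ast})\cong\H^{-j}(X)$, and $\Hom_{R^{\mathrm{op}}}(\H^j(X^\ast),R)=\H^j(X^\ast)^\ast$. Setting $i=j$ and invoking Remark \ref{boundry}(2), which gives $C^{-i+1}(X^\ast)=\tr C^i(X)$, one has $C^{i+1}(X^\ast)=\tr C^{-i}(X)$ and $C^i(X^\ast)=\tr C^{-i+1}(X)$. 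Substituting all of these identifications into the sequence above yields precisely the long exact sequence in the statement of Corollary \ref{starex}.

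The only remaining point is to check that the connecting arrow in the middle is the map $\gamma^i_X$ of the paper. This was actually remarked immediately before the statement: under the canonical identification $X^{\ast\ast}\cong X$, the natural map $\rho^i_{X^\ast,R}:\H^{-i}(X^{\ast\ast})\to\H^i(X^\ast)^\ast$ sends $\overline x$ to the functional $\overline f\mapsto f(x)$, which is exactly the definition of $\gamma^i_X$. So no extra computation is needed.

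The proof is essentially bookkeeping, and the only mildly delicate step is keeping the index conventions straight: one must match the $j$ appearing in Corollary \ref{fundex}(1) with the $i$ appearing in Corollary \ref{starex}, and apply Remark \ref{boundry}(2) with the correct sign so that the cokernels appearing on the $X^\ast$-side translate to the transposes of $C^{-i}(X)$ and $C^{-i+1}(X)$ on the $X$-side. There is no new homological input; the corollary is just the specialization of the previously established long exact sequence of Auslander--Bridger type to the dual complex.
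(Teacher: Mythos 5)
Your proposal is correct and is exactly the argument the paper intends: the corollary is Corollary \ref{fundex}(1) applied to $X^\ast\in\K(\proj R^{\mathrm{op}})$ with $M=R$, combined with $X^{\ast\ast}\cong X$, the identification $C^{i+1}(X^\ast)=\tr C^{-i}(X)$ and $C^{i}(X^\ast)=\tr C^{-i+1}(X)$ from Remark \ref{boundry}(2), and the paper's own remark that $\gamma^i_X=\rho^i_{X^\ast,R}$. Your index bookkeeping checks out, so nothing is missing.
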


The following corollary is the dual version of \cite[Theorem 3.4]{Yos} and follows from Corollary \ref{starex}.
Note that this corollary asserts that the converse of \cite[Theorem 3.4(2)]{Yos} is also true. 

\begin{cor}\label{torref}
Let $X\in K(R)$.
The following hold.
\begin{enumerate}
   \item
   The complex $X$ is torsionless if and only if $\Ext^1_R(C^i(X^\ast),R)=0$ for all integers $i\in\ZZ$, i.e., $C^i(X)$ is a torsionless module for all integers $i\in\ZZ$. 
   \item
   The complex $X$ is reflexive if and only if $\Ext^1_R(C^i(X^\ast),R)=\Ext^2(C^i(X^\ast), R)=0$ for all integers $i\in\ZZ$, i.e., $C^i(X)$ is a reflexive module for all integers $i\in\ZZ$. 
\end{enumerate}
\end{cor}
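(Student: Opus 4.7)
The plan is to read both equivalences directly off the initial segment of the long exact sequence in Corollary~\ref{starex}, translating between the transposes $\tr C^{-i}(X)$ appearing there and the cokernels $C^{i+1}(X^\ast)$ of the dual complex via the identity of Remark~\ref{boundry}(2), and then invoking the classical Auslander--Bridger sequence \eqref{clasABseq} to recognize the resulting $\Ext$-vanishing as torsionlessness (respectively, reflexivity) of each module $C^i(X)$.

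For (1), the initial piece
\[
0 \to \Ext^1_{R^{\mathrm{op}}}(\tr C^{-i}(X), R) \to \H^{-i}(X) \xrightarrow{\gamma^i_X} \H^i(X^\ast)^\ast
\]
identifies $\ker \gamma^i_X$ with $\Ext^1_{R^{\mathrm{op}}}(\tr C^{-i}(X), R)$, so $X$ is torsionless iff this $\Ext^1$-module vanishes for every $i \in \ZZ$. Substituting $\tr C^{-i}(X) = C^{i+1}(X^\ast)$ and letting $i+1$ range over $\ZZ$ yields the first claimed vanishing. The module-theoretic reformulation then follows from \eqref{clasABseq}, which says that a finitely generated $R$-module $M$ is torsionless iff $\Ext^1_{R^{\mathrm{op}}}(\tr M, R) = 0$; applying this to each $C^i(X)$ and translating $\tr C^i(X) = C^{-i+1}(X^\ast)$ again produces the same family of vanishing conditions.

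For (2), surjectivity of $\gamma^i_X$ demands two further terms of the sequence,
\[
\H^i(X^\ast)^\ast \xrightarrow{\phi} \Ext^2_{R^{\mathrm{op}}}(\tr C^{-i}(X), R) \xrightarrow{\psi} \Ext^1_{R^{\mathrm{op}}}(\tr C^{-i+1}(X), R),
\]
and here is where a small observation is needed. Since reflexivity forces the $\Ext^1$-vanishing of~(1) for every index, the target of $\psi$ is zero, hence $\ker \psi$ equals its whole source, and exactness at $\Ext^2$ gives $\image \phi = \Ext^2_{R^{\mathrm{op}}}(\tr C^{-i}(X), R)$. Therefore $\gamma^i_X$ is surjective iff $\phi = 0$ iff this $\Ext^2$ also vanishes. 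Combining the two conditions, $X$ is reflexive iff $\Ext^1_{R^{\mathrm{op}}}(\tr C^{-i}(X), R) = \Ext^2_{R^{\mathrm{op}}}(\tr C^{-i}(X), R) = 0$ for every $i$; the first formulation in~(2) comes from applying Remark~\ref{boundry}(2), and the module-theoretic reformulation from the second half of \eqref{clasABseq}, which characterizes reflexivity of $M$ by the simultaneous vanishing of $\Ext^1_{R^{\mathrm{op}}}(\tr M, R)$ and $\Ext^2_{R^{\mathrm{op}}}(\tr M, R)$. The only non-mechanical step in the whole argument is this observation that the $\Ext^1$-vanishing decouples the surjectivity criterion so that it reduces to the isolated vanishing of $\Ext^2$; everything else is routine bookkeeping of the shift between $\tr C^i(X)$ and $C^{-i+1}(X^\ast)$.
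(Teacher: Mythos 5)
Your proof is correct and follows exactly the route the paper intends (the paper gives no explicit argument, merely asserting the corollary follows from Corollary~\ref{starex}): read off $\ker\gamma^i_X\cong\Ext^1_{R^{\mathrm{op}}}(\tr C^{-i}(X),R)$ and $\cok\gamma^i_X\cong\ker\bigl(\Ext^2_{R^{\mathrm{op}}}(\tr C^{-i}(X),R)\to\Ext^1_{R^{\mathrm{op}}}(\tr C^{-i+1}(X),R)\bigr)$, then use the $\Ext^1$-vanishing forced by torsionlessness to upgrade the latter to the full $\Ext^2$. You correctly isolated the one non-mechanical point — that surjectivity of $\gamma^i_X$ alone does not kill $\Ext^2$ until the $\Ext^1$-terms are known to vanish — and the index translation via $\tr C^{-i}(X)=C^{i+1}(X^\ast)$ is right.
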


Motivated by the above corollary, we introduce the $n$-torsionfreeness for objects of $\K(R)$ for any positive integer $n\ge0$.
The $1$-torsionfreeness is equivalent to the torsionlessness, and the $2$-torsionfreeness is equivalent to the reflexivity.

\begin{dfn}\label{defntrf}
Let $n$ be a positive integer.
A complex $X\in \K(R)$ is said to be {\em $n$-torsionfree} if $\Ext^j_{R^{\mathrm{op}}}(C^i(X^\ast), R)=0$ for all integers $i\in\ZZ$ and $1\le j\le k$.
\end{dfn}

The following lemma immediately follows from Corollary \ref{starex}.

\begin{lem}\label{ntrftriv}
Let $X\in\K(R)$ and $n>1$ be an integer.
Then $X$ is $n$-torsionfree if and only if the natural map $\gamma_{X}^i : \H^{-i}(X)\to \H^i(X^\ast)^\ast$ is bijective and $\Ext_{R^{\mathrm{op}}}^j(\H^{i}(X^\ast) , R)=0$ for all $i\in \ZZ$ and $1\le j\le n-2$.
\end{lem}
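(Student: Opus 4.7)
The plan is to read off both directions from the Auslander--Bridger long exact sequence of Corollary \ref{starex}. First I would rewrite the $n$-torsionfreeness condition in terms of $\tr C^{-i}(X)$: since $C^{i+1}(X^\ast)\approx \tr C^{-i}(X)$ by Remark \ref{boundry}(2) and $\Ext^j_{R^{\mathrm{op}}}(-,R)$ in positive degree is insensitive to projective summands, $X$ is $n$-torsionfree if and only if $\Ext^j_{R^{\mathrm{op}}}(\tr C^{-i}(X),R)=0$ for every $i\in\ZZ$ and every $1\leq j\leq n$. With this translation, everything reduces to diagram chases in the long exact sequence of Corollary \ref{starex}.

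For the forward direction, assume $X$ is $n$-torsionfree. The first row of the long exact sequence reads
\[
0\to \Ext^1_{R^{\mathrm{op}}}(\tr C^{-i}(X),R)\to \H^{-i}(X)\xrightarrow{\gamma^i_X}\H^i(X^\ast)^\ast\to \Ext^2_{R^{\mathrm{op}}}(\tr C^{-i}(X),R),
\]
and since both outer Ext-terms vanish (as $n\geq 2$), $\gamma^i_X$ is bijective for every $i$. For each $1\leq j\leq n-2$, the segment of the long exact sequence sandwiching $\Ext^j_{R^{\mathrm{op}}}(\H^i(X^\ast),R)$ is
\[
\Ext^j_{R^{\mathrm{op}}}(\tr C^{-i+1}(X),R)\to \Ext^j_{R^{\mathrm{op}}}(\H^i(X^\ast),R)\to \Ext^{j+2}_{R^{\mathrm{op}}}(\tr C^{-i}(X),R),
\]
whose outer terms both vanish (the left because $j\leq n$, the right because $j+2\leq n$), forcing the middle term to vanish.

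For the reverse direction, assume $\gamma^i_X$ is bijective for every $i$ and $\Ext^j_{R^{\mathrm{op}}}(\H^i(X^\ast),R)=0$ for every $i$ and every $1\leq j\leq n-2$. Injectivity of $\gamma^i_X$ immediately yields $\Ext^1_{R^{\mathrm{op}}}(\tr C^{-i}(X),R)=0$ for all $i$; then surjectivity of $\gamma^i_X$, combined with the exact piece
\[
\H^i(X^\ast)^\ast\to \Ext^2_{R^{\mathrm{op}}}(\tr C^{-i}(X),R)\to \Ext^1_{R^{\mathrm{op}}}(\tr C^{-i+1}(X),R),
\]
forces $\Ext^2_{R^{\mathrm{op}}}(\tr C^{-i}(X),R)$ to inject into the already-vanishing right-hand term, settling the case $n=2$. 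For $3\leq j\leq n$ I induct on $j$ using the three-term piece
\[
\Ext^{j-2}_{R^{\mathrm{op}}}(\H^i(X^\ast),R)\to \Ext^j_{R^{\mathrm{op}}}(\tr C^{-i}(X),R)\to \Ext^{j-1}_{R^{\mathrm{op}}}(\tr C^{-i+1}(X),R),
\]
whose left end vanishes because $j-2\leq n-2$ and whose right end vanishes by the inductive hypothesis.

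The only real obstacle is bookkeeping: one must carefully locate the correct three-term segments of the long exact sequence of Corollary \ref{starex} that sandwich each Ext-term, and absorb the index shift $C^i(X^\ast)\approx \tr C^{1-i}(X)$ between the two indexing conventions. Once those identifications are made, each implication is a one-line diagram chase.
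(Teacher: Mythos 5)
Your proof is correct and is exactly the argument the paper intends: the paper gives no written proof beyond stating that the lemma ``immediately follows from Corollary \ref{starex}'', and your diagram chase in that long exact sequence, together with the identification $C^{i+1}(X^\ast)\approx\tr C^{-i}(X)$ from Remark \ref{boundry}(2), supplies precisely the missing details. Both directions, including the induction on $j$ in the reverse implication, locate the correct three-term segments and are sound.
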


Let $M$ be a finitely generated $R$-module and $n>0$ an integer.
Auslander and Bridger \cite{AB} proved that $M$ is $n$-torsionfree if and only if there exists an exact sequence $0\to M\to P_0\to P_1 \to \cdots \to P_{n-1}$ with $P_i\in\proj R$ for all $i$ such that the $R$-dual $P_{n-1}^\ast \to \cdots \to P_1^\ast \to P_0^\ast \to M^\ast \to 0$ is also exact.
The following theorem is an analogue of the result of Auslander and Bridger for complexes, and gives a higher version of Propositions \ref{trleq} and \ref{refeq}.

\begin{thm}\label{ntrfeq}
Let $X\in \K(R)$ and $n\ge 1$ be an integer.
Then the following are equivalent.
\begin{enumerate}[\rm(1)]
   \item
   The complex $X$ is $n$-torsionfree.
   \item
   For each $0\le i\le n-1$, there exists a cohomologically ghost triangle $X_i\to P_i\to X_{i+1}\to X_i[1]$ in $\K(\proj R)$ with $P_i\in\Add R$ whose $R$-dual $X_{i+1}^\ast\to P_i^\ast\to X_i^\ast\to X_{i+1}^\ast[1]$ is also cohomologically ghost, such that $X\cong X_0$ in $\K(R)$.
\end{enumerate}
\end{thm}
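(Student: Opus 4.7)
The plan is to proceed by induction on $n$, with the base case $n=1$ being exactly Proposition \ref{trleq}. In the inductive step, the idea is to peel off one layer: given an $n$-torsionfree complex $X$, extract a first triangle $X \to P_0 \to X_1 \to X[1]$ using that $X$ is in particular torsionless, and then show that $X_1$ is $(n-1)$-torsionfree so that the induction hypothesis supplies the remaining $n-1$ triangles. The reverse direction is symmetric.

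The central technical step is the following degree-shift. Suppose $X \to P \to Y \to X[1]$ is a cohomologically ghost triangle in $\K(R)$ with $P \in \Add R$ whose $R$-dual $Y^\ast \to P^\ast \to X^\ast \to Y^\ast[1]$ is also cohomologically ghost. Then $P^\ast \to X^\ast$ is cohomologically surjective with $P^\ast \in \Add R^{\mathrm{op}}$, so by Remark \ref{syzcos}(1) it is a right $\Add R^{\mathrm{op}}$-approximation of $X^\ast$; hence $Y^\ast \approx \syz X^\ast$ in $\underline{\K(R^{\mathrm{op}})}$. Applying Lemma \ref{Csyz} inside $\mod R^{\mathrm{op}}$ gives $C^i(Y^\ast) \approx \syz C^i(X^\ast)$ for every $i \in \ZZ$. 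Since stable isomorphism preserves positive-degree $\Ext$, and the standard syzygy identity gives $\Ext^j_{R^{\mathrm{op}}}(\syz M, R) \cong \Ext^{j+1}_{R^{\mathrm{op}}}(M, R)$ for $j \ge 1$, we obtain
$$
\Ext^j_{R^{\mathrm{op}}}(C^i(Y^\ast), R) \cong \Ext^{j+1}_{R^{\mathrm{op}}}(C^i(X^\ast), R) \quad \text{for all } i \in \ZZ \text{ and } j \ge 1.
$$

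With this in hand, both implications fall out cleanly. For $(1) \Rightarrow (2)$ with $n \ge 2$: torsionlessness of $X$ supplies the first triangle $X = X_0 \to P_0 \to X_1 \to X_0[1]$ by Proposition \ref{trleq}, and $n$-torsionfreeness of $X$ combined with the displayed isomorphism forces $\Ext^j_{R^{\mathrm{op}}}(C^i(X_1^\ast), R) = 0$ for $1 \le j \le n-1$, so $X_1$ is $(n-1)$-torsionfree; the induction hypothesis applied to $X_1$ then produces the remaining $n-1$ triangles. For $(2) \Rightarrow (1)$: the tail of the given data from $X_1$ onward is a valid system of $n-1$ triangles for $X_1$, so by induction $X_1$ is $(n-1)$-torsionfree; the degree-shift yields $\Ext^k_{R^{\mathrm{op}}}(C^i(X^\ast), R) = 0$ for $2 \le k \le n$, while vanishing in degree $1$ comes from the first triangle via Proposition \ref{trleq} together with Corollary \ref{torref}.

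The main obstacle I anticipate is pinning down the stable isomorphism $C^i(Y^\ast) \approx \syz C^i(X^\ast)$ carefully: one must verify that Remark \ref{syzcos}(1) and Lemma \ref{Csyz}, stated for $\K(R)$, transfer verbatim to $\K(R^{\mathrm{op}})$, which they do since the whole $\Add$-formalism is symmetric in $R$ and $R^{\mathrm{op}}$. Once this point is handled, the rest of the argument is mostly bookkeeping with the long exact sequence of Corollary \ref{starex} and the inductive structure.
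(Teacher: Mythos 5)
Your argument is correct, and its skeleton is the same as the paper's: induction on $n$, peeling off one $\Add R$-approximation triangle via Proposition \ref{trleq} and showing that the cone $X_1$ is $(n-1)$-torsionfree. Where you genuinely differ is in the mechanism of the dimension shift. The paper reformulates $n$-torsionfreeness through Lemma \ref{ntrftriv} (bijectivity of $\gamma^{\bullet}_X$ plus vanishing of $\Ext^j(\H^{\bullet}(X^\ast),R)$ for $1\le j\le n-2$) and shifts degrees via the short exact sequence $0\to \H^{\bullet}(X_1^\ast)\to \H^{\bullet}(P_0^\ast)\to \H^{\bullet}(X^\ast)\to 0$ and the diagram (\ref{44}); this forces $n=1,2$ (and partly $n=3$) to be handled separately. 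You instead note that the dual of the approximation triangle exhibits $X_1^\ast$ as $\syz(X^\ast)$ in $\K(R^{\mathrm{op}})$, so Lemma \ref{Csyz} yields $C^i(X_1^\ast)\approx \syz C^i(X^\ast)$ and hence $\Ext^j(C^i(X_1^\ast),R)\cong \Ext^{j+1}(C^i(X^\ast),R)$ for $j\ge 1$ --- a shift performed directly on the cokernels appearing in Definition \ref{defntrf}. This is arguably cleaner: it runs uniformly for all $n\ge 2$, bypasses Lemma \ref{ntrftriv} and the diagram chase, and needs Corollary \ref{torref} only for the degree-one vanishing in the converse direction. The two points requiring care are the ones you flag: Remark \ref{syzcos}(1) and Lemma \ref{Csyz} must be read over $R^{\mathrm{op}}$, and $C^i(-)$ must be well defined up to stable isomorphism of modules on stable isomorphism classes of complexes; both are used in exactly this way elsewhere in the paper (e.g.\ in the proof of Theorem \ref{Gdimca}), so your proof is at the same level of rigor as the original.
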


\begin{proof}
We prove the assertion by using induction on $n$.
From Propositions \ref{trleq} and \ref{refeq}, we know that the equivalence $(1)\Leftrightarrow(2)$ holds true for $n=1, 2$.
Assume that $n\ge 3$.
First, we prove the implication $(1)\Rightarrow(2)$.
Suppose that $X$ is $n$-torsionfree and take a cohomologically ghost triangle $X\to P_0\to X_1\to X[1]$ such that its $R$-dual $X_1^\ast \to P_0^\ast \to X^\ast \to X_1^\ast [1]$ is also cohomologically ghost.
Then we have a commutative diagram
\begin{equation}\label{44}
\xymatrix@R-1pc@C-1pc{
0\ar[rr]&&\H^{-\bullet}(X)\ar[rr]\ar[dd]^-{\gamma^{\bullet}_{X}}&&\H^{-\bullet}(P_0)\ar[rr]\ar[dd]^-{\gamma^{\bullet}_{P_0}}&&\H^{-\bullet}(X_1)\ar[rr]\ar[dd]^-{\gamma^{\bullet}_{X_1}}&&0&&\\
&&&&&&&&\\
0\ar[rr]&&\H^{\bullet}(X^\ast)^\ast\ar[rr]&&\H^{\bullet}(P_0^\ast)^\ast\ar[rr]&&\H^{\bullet}(X_1^\ast)^\ast\ar[rr]&&\Ext^1(\H^{\bullet}(X^\ast), R)\ar[rr]&&0
}
\end{equation}
with exact rows.
The morphisms $\gamma^{\bullet}_X$ and $\gamma^{\bullet}_P$ are bijective and $\Ext^1(\H^{^\bullet}(X^\ast), R)=0$ by the 3-torsionfreeness of $X$ and Lemma \ref{ntrftriv}. 
Hence $\gamma^{\bullet}_{X_1}$ is bijective.
Also, since the sequence $0\to \H^{\bullet}(X_1^\ast)\to \H^{\bullet}(P_0^\ast)\to \H^{\bullet}(X^\ast)$ is exact and $\H^{\bullet}(P_0^\ast)$ is projective, we have $\Ext^{i}(\H^{\bullet}(X_1^\ast),R)\cong \Ext^{i+1}(\H^{\bullet}(X^\ast),R)=0$ for all $1\le i\le n-3$.
By virtue of Lemma \ref{ntrftriv}, $X_1$ is $(n-1)$-torsionfree.
Applying the induction hypothesis to $X_1$, we get the desired triangles.

Next, we show that the converse holds true.
For the cohomologically ghost triangle $X\to P_0\to X_1\to X[1]$ obtained from the assumption $(2)$, the complexes $X$ and $X_1$ are $(n-1)$-torsionfree by the induction hypothesis.
Thus it is enough to show that $\Ext^{n-2}(\H^{\bullet}(X^\ast),R)=0$.
When $n=3$, since $\gamma^{\bullet}_{X_1}$ is bijective by the reflexivity of $X_1$, it follows from the diagram (\ref{44}) that $\Ext^1(\H^{\bullet}(X^\ast),R)=0$.
When $n>3$, we have $\Ext^{n-2}(\H^{\bullet}(X^\ast),R)\cong\Ext^{n-3}( \H^{\bullet}(X_1^\ast),R)$.
But these extensions vanish as $X_1$ is $(n-1)$-torsionfree.
Therefore, $X$ is $n$-torsionfree and we are done.
\end{proof}

A finitely generated $R$-module $M$ is called Gorenstein projective (or totally reflexive) is $M$ and $\tr M$ are $\infty$-torsionfree.
We define the Goresntein projectivity for objects of $\K(R)$.
Moreover, using the concept of cohomologically ghost triangles, we define the projective (resp. Gorenstein projective) dimension for objects of $\K(R)$ as an analogue of projective (resp. Gorenstein projective) dimension for finitely generated $R$-modules.

\begin{dfn}\label{defGdim}
Let $X\in \K(R)$.
\begin{enumerate}[\rm(1)]
   \item
   The {\em projective dimension} $\pd_R X$ of the complex $X$ is defined to be the infimum of integers $n$ such that there exist triangles $X_{i+1}\to P_i\xrightarrow{f_i} X_i\to X_{i+1}[1]$ with $P_i$ split and $f_i$ cohomologically surjective for all $0\le i\le n-1$ such that $X_0\cong X$ in $\K(R)$ and $X_n$ is split.
   \item
   The complex $X$ is called {\em Gorenstein projective} if $C^i(X)$ is Gorenstein projective for all integers $i$, i.e., $\Ext_R^j(C^i(X),R)=0$ and $\Ext^j_R(C^i(X^\ast),R)=0$ for all integers $i\in\ZZ$ and $j>0$.
   \item
   The {\em Gorenstein dimension} $\Gdim_R X$ of the complex $X$ is defined to be the infimum of integers $n$ such that there exist triangles $X_{i+1}\to G_i\xrightarrow{f_i} X_i\to X_{i+1}[1]$ with $G_i$ Gorenstein projective and $f_i$ cohomologically surjective for all $0\le i\le n-1$ such that $X_0\cong X$ in $\K(R)$ and $X_n$ is Gorenstein projective.
\end{enumerate}
\end{dfn}

For any $X\in\K(R)$ and integer $n\ge0$, $\pd_R X\le n$ if and only if $\syz^n X\in\Add R$ by Definition \ref{defsyzcosyz} and Remark \ref{syzcos}(1).

We need the following lemma, which is well-known for short exact sequences, to state several basic properties of projective dimension and Gorenstein dimension.
\begin{lem}\label{mawasu}
Let $X\xrightarrow{f}Y\xrightarrow{g}Z\to X[1]$ be a cohomologically ghost triangle in $\K(R)$.
Then there exists a cohomologically ghost triangle $\syz Z\to X\oplus P\xrightarrow{(f, s)}Y\to \syz Z[1]$ in $\K(R)$ with $P\in\Add R$.
\end{lem}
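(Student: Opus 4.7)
The plan is to first extract a syzygy triangle for $Z$ by choosing a right $\Add R$-approximation $p\colon P\to Z$, yielding a distinguished triangle $\syz Z\to P\xrightarrow{p} Z\to\syz Z[1]$ with $P\in\Add R$ (Definition \ref{defsyzcosyz} and Remark \ref{syzcos}(1)). Since the given triangle is cohomologically ghost, the morphism $g$ is cohomologically surjective; hence Lemma \ref{speq} applied to $P\in\Add R$ produces a morphism $s\colon P\to Y$ in $\K(R)$ with $gs=p$. This $s$ will serve as the second component of the desired morphism $(f,s)\colon X\oplus P\to Y$.

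Next I will realize the triangle through the homotopy pullback $W$ of $Y\xrightarrow{g} Z\xleftarrow{p} P$, that is, the object fitting in a distinguished triangle $W\to Y\oplus P\xrightarrow{(g,-p)} Z\to W[1]$. Two applications of the octahedral axiom to the compositions $W\to Y\oplus P\to Y$ and $W\to Y\oplus P\to P$ produce distinguished triangles $\syz Z\to W\to Y\to\syz Z[1]$ and $X\to W\to P\to X[1]$ respectively, since the cone of $Y\oplus P\to Y$ is $P[1]$ and the cone of $Y\oplus P\to P$ is $Y[1]$. These are the standard ``base-change'' triangles attached to the pullback square.

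The crucial point is that the second of these triangles splits. Indeed, the morphism $(s,\id_P)\colon P\to Y\oplus P$ satisfies $(g,-p)\circ(s,\id_P)=gs-p=0$, and therefore factors through the inclusion $W\to Y\oplus P$ as some $\sigma\colon P\to W$. Projecting to $P$ gives $\pi_P\sigma=\id_P$, so $\sigma$ is a section of $W\to P$. Thus $W\cong X\oplus P$, and under this isomorphism the map $W\to Y$ becomes precisely $(f,s)$, since the canonical inclusion $X\to W$ composes with $W\to Y$ to $f$, while $\sigma$ composes with $W\to Y$ to $s$. Substituting into the first base-change triangle yields the desired triangle $\syz Z\to X\oplus P\xrightarrow{(f,s)} Y\to \syz Z[1]$.

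Finally I will verify that this triangle is cohomologically ghost. The long exact cohomology sequence attached to $W\to Y\oplus P\to Z$, combined with the surjectivity of $\H^{\bullet}(p)$, yields both $\H^i(W)\cong\ker(\H^i(Y)\oplus \H^i(P)\to\H^i(Z))$ and the surjectivity of $\H^{\bullet}(W)\to\H^{\bullet}(Y)$, whence the triangle $\syz Z\to W\to Y\to\syz Z[1]$ gives short exact sequences $0\to\H^i(\syz Z)\to\H^i(W)\to\H^i(Y)\to 0$. Transporting along $W\cong X\oplus P$ produces the required exactness of $0\to\H^i(\syz Z)\to\H^i(X)\oplus\H^i(P)\to\H^i(Y)\to 0$. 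The main subtlety is the explicit exhibition of the splitting $\sigma=(s,\id_P)$ of $W\to P$ and the consequent matching of $W\to Y$ with $(f,s)$; everything else is formal triangulated-category manipulation of the pullback square.
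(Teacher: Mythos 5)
Your proof is correct. It shares the paper's two key steps --- choosing a right $\Add R$-approximation $p\colon P\to Z$ and lifting it through the cohomologically surjective $g$ via Lemma \ref{speq} to obtain $s\colon P\to Y$ with $gs=p$ in $\K(R)$ --- but you organize the triangulated bookkeeping differently. You build the homotopy pullback $W$ of $Y\to Z\leftarrow P$, split the base-change triangle $X\to W\to P\to X[1]$ using the section induced by $(s,\id_P)$, and thereby identify $W$ with $X\oplus P$ and $W\to Y$ with $(f,s)$; ghostness then falls out of the long exact sequence of the pullback triangle. The paper instead starts directly from the morphism $(f,s)\colon X\oplus P\to Y$, checks by a cohomology diagram chase that it is cohomologically surjective (which is exactly the ghostness condition), and applies a single octahedron to the composite $X\to X\oplus P\to Y$ to identify $\Cone(f,s)[-1]$ with $\syz Z$. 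The two routes produce the same triangle; yours trades the explicit surjectivity check for an extra octahedron plus the splitting argument. The only point you assert rather than verify is the base-change compatibility that the composite $X\to W\to Y$ agrees with $f$ (up to sign, which can be absorbed into $s$); this is standard, but since the lemma's conclusion names the map $(f,s)$ explicitly, it is the one step worth spelling out if you write this up.
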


\begin{proof}
We take a split complex $P\in \K(R)$ and a morphism $s:P\to Z$ in $\K(R)$ so that $s$ is cohomologically surjective.
By Lemma \ref{speq}, there exists a morphism $s:P\to Y$ in $\K(R)$ such that $fs=t$ in $\K(R)$.
Then, the diagram
$$
\xymatrix@R-1pc@C-1pc{
X\ar@{=}[dd]\ar[rr]^{\binom{1}{0}}&&X\oplus P\ar[dd]^-{(f, s)}\ar[rr]^-{(0, 1)}&&P\ar[dd]^-{t}\\
&&&\\
X\ar[rr]^{f}&&Y\ar[rr]^{g}&&Z
}
$$
is commutative, the morphism $(f, s)$ is also cohomologically surjective.
By the octahedron axiom, there are two morphisms $a:P\to Z$ and $b:Z\to \Cone(f,s)$ such that the diagram
$$
\xymatrix@R-1pc@C-1pc{
X\ar@{=}[dd]\ar[rr]^{\binom{1}{0}}&&X\oplus P\ar[dd]^-{(f, s)}\ar[rr]^-{(0, 1)}&& P \ar[rr]^-{0}\ar[dd]^-{a}&&X[1]\ar@{=}[dd]\\
&&&\\
X\ar[rr]^{f}&&Y\ar[dd]\ar[rr]^{g}&&Z\ar[dd]^-{b}\ar[rr]&&X[1]\\
&&&\\
&&C_{(f,s)}\ar[dd]\ar@{=}[rr]&&C_{(f,s)}\ar[dd]&&\\
&&&\\
&&(X\oplus P)[1]\ar[rr]&&P[1],&&
}
$$
where $C_{(f,s)}=\Cone(f,s)$.
As the morphisms $g$ and $(f, s)$ are cohomologically surjective, so is the morphism $a$.
Namely, $C_{(f,s)}\cong \syz Z$ in $\K(R)$ up to projective summands.
Consequently, we obtain the triangle $\syz Z\to X\oplus P\xrightarrow{(f,s)}Y\to \syz Z[1]$ and $(f,s)$ is cohomologically surjective.
\end{proof}

A full subcategory $\X$ of $\mod R$ is called {\em resolving} if it contains the module $R$ and is closed under direct summands, extensions and kernels of epimorphisms.
It was proved by Auslander and Bridger that the full subcategory $\gp R$ of $\mod R$ consisting of Gorenstein projective $R$-modules is resolving.
The third assertion of the following proposition states the corresponding result for the category $\K(R)$.

\begin{prop}\label{resol}
   The following hold.
   \begin{enumerate}[\rm(1)]
   \item
   Let $X\in\K(R)$.
   Then the equality $\pd_RX=\sup\{\pd_R C^i(X) \mid i\in\ZZ \}$ holds.
   \item
   Let $X\in\K(R)$.
   If $X$ is Gorenstein projective, then $\H^i(X)$ is a Gorenstein projective module for all integers $i$.
   \item
   Let $X$, $Y$ and $Z$ be objects of $\K(R)$.
   Then the following hold.
   \begin{itemize}
   \item[(3-a)]
   Split complexes are Gorenstein projective.
   \item[(3-b)]
   Let $Y\in\K(R)$.
   If $X$ is Gorenstein projective and $Y$ is a direct summand of $X$ in $\K(R)$, then $Y$ is also Gorenstein projective.
   \item[(3-c)]
   Let $n$ be an integer.
   If $X$ is Gorenstein projective, then so is $X[n]$.
   \item[(3-d)]
   If $X$ is Gorenstein projective, then so are $\syz X$, $X^{\ast}$ and $\syz^{-}X$.
   \item[(3-e)]
   Let $X\xrightarrow{f}Y\xrightarrow{g}Z\xrightarrow{h}X[1]$ be a cohomologically ghost triangle in $\K(R)$.
   If $Z$ is Gorenstein projective, then $X$ is Gorenstein projective if and only if so is $Y$.
   \end{itemize}
   \end{enumerate}
\end{prop}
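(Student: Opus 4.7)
My plan is to dispatch the three parts in order, translating statements about complexes into statements about their cokernel modules $C^i$ via Lemma~\ref{Csyz} and Theorem~\ref{splitdef}, and then invoking the Auslander--Bridger fact that $\gp R$ is a resolving subcategory of $\mod R$ (closed under direct summands, extensions, and kernels of epimorphisms within the class). For (1), the equivalence $\pd_R X \le n$ if and only if $\syz^n X \in \Add R$ is immediate from Definitions~\ref{defsyzcosyz} and~\ref{defGdim}(1) together with Remark~\ref{syzcos}(1). Iterating Lemma~\ref{Csyz} gives $C^i(\syz^n X) \approx \syz^n C^i(X)$ in $\underline{\mod R}$, and by Theorem~\ref{splitdef} the complex $\syz^n X$ lies in $\Add R$ precisely when each $C^i(\syz^n X)$ is projective; combining these yields the equality $\pd_R X = \sup_i \pd_R C^i(X)$.

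For (2), I would splice the standard short exact sequences $0 \to B^i(X) \to Z^i(X) \to \H^i(X) \to 0$ and $0 \to Z^i(X) \to X^i \to B^{i+1}(X) \to 0$ into
\[
0 \to \H^i(X) \to C^i(X) \to B^{i+1}(X) \to 0.
\]
By hypothesis $C^i(X) \in \gp R$, and by Remark~\ref{boundry}(1) one has $B^{i+1}(X) \approx \syz C^{i+1}(X)$, which is likewise in $\gp R$ by closure of $\gp R$ under syzygies. Applying the resolving property to the displayed sequence then gives $\H^i(X) \in \gp R$.

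Parts (3-a)--(3-d) are routine translations. Assertion (3-a) is Theorem~\ref{splitdef}. For (3-b), the key point is that $X \mapsto C^i(X)$ descends to an additive functor $\K(R) \to \underline{\mod R}$, since $C^i(f)$ for a null-homotopic $f : X \to X'$ factors through the projective module $X'^i$; a splitting in $\K(R)$ then yields a stable splitting of cokernels, and closure of $\gp R$ under direct summands finishes. Assertion (3-c) is immediate from the identity $C^i(X[n]) = C^{n+i}(X)$. For (3-d), the $\syz X$ case combines Lemma~\ref{Csyz} with closure of $\gp R$ under syzygies, the $X^\ast$ case is automatic because Definition~\ref{defGdim}(2) is manifestly symmetric in $X$ and $X^\ast$, and $\syz^{-} X = (\syz X^\ast)^\ast$ follows by combining the two.

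The heart of the argument is (3-e). I plan to rotate the triangle to $Z[-1] \to X \to Y \to Z$ and represent it, up to isomorphism in $\K(R)$, as the mapping cone of the connecting morphism $h[-1] : Z[-1] \to X$, so that $Y^i = X^i \oplus Z^i$ and one obtains a degreewise split short exact sequence of actual complexes $0 \to X \to Y \to Z \to 0$. The snake lemma applied to the diagram of differentials yields an exact sequence
\[
0 \to Z^{i-1}(X) \to Z^{i-1}(Y) \to Z^{i-1}(Z) \xrightarrow{\partial} C^i(X) \to C^i(Y) \to C^i(Z) \to 0
\]
for every $i$, in which the snake connecting $\partial$ factors through the cohomological connecting $\H^{i-1}(Z) \to \H^i(X)$; this cohomological connecting vanishes by the cohomologically ghost hypothesis on the original triangle. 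Hence $0 \to C^i(X) \to C^i(Y) \to C^i(Z) \to 0$ is exact for every $i$, and since $C^i(Z) \in \gp R$, the resolving property of $\gp R$ gives that $C^i(X) \in \gp R$ if and only if $C^i(Y) \in \gp R$. I expect the main obstacle to be choosing the right rotation of the triangle---using the cone of $h[-1]$ rather than of $f : X \to Y$---so that the resulting short exact sequence on cokernels feeds correctly into the resolving property in both directions.
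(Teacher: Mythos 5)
Your proposal is correct. Parts (1), (2) and (3-a)--(3-d) follow the paper essentially verbatim: the same chain of equivalences through Lemma \ref{Csyz} and Theorem \ref{splitdef} for (1), the same splicing of $0\to\H^i(X)\to C^i(X)\to B^{i+1}(X)\to0$ with $B^{i+1}(X)\approx\syz C^{i+1}(X)$ for (2), and the same reductions to the resolving property of $\gp R$ inside $\mod R$ for (3-a)--(3-d). The genuine divergence is (3-e). The paper argues at the level of cohomology modules: it transfers $\Ext^{>0}(\H^\bullet(-),R)=0$ from $X$ and $Z$ to $Y$ via assertion (2), shows the $R$-dual triangle is cohomologically ghost using the injectivity of $\rho^{\bullet}_Z$, applies the snake lemma to get $\rho^{\bullet}_Y$ bijective, and invokes Lemma \ref{ntrftriv} and its dual; the converse direction is then handled separately through Lemma \ref{mawasu} and (3-d). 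You instead realize the rotated triangle as a degreewise split short exact sequence of complexes $0\to X\to Y'\to Z\to 0$ with $Y'\cong Y$ in $\K(R)$, note that the snake connecting map $Z^{i-1}(Z)\to C^i(X)$ factors as $Z^{i-1}(Z)\twoheadrightarrow\H^{i-1}(Z)\to\H^i(X)\hookrightarrow C^i(X)$ through the cohomological connecting map, which vanishes by the ghost hypothesis, and so obtain short exact sequences $0\to C^i(X)\to C^i(Y)\to C^i(Z)\to 0$; the resolving property of $\gp R$ then settles both implications simultaneously (extensions for one direction, kernels of epimorphisms for the other). Your route is more economical and symmetric, bypasses the $\rho$-machinery, Lemma \ref{ntrftriv} and Lemma \ref{mawasu} entirely, and the intermediate fact it isolates --- that a cohomologically ghost triangle induces short exact sequences on all $C^i$ --- is of independent use. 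The one point you should make explicit is that replacing $Y$ by the cone model requires $C^i(Y)\approx C^i(Y')$ for homotopy equivalent complexes of projectives; the paper relies on the same stable invariance of $C^i$ throughout (for instance in Lemma \ref{Csyz} and Theorem \ref{Gdimca}), so this is a shared standard ingredient rather than a gap in your argument.
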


\begin{proof}
(1) Let $n\ge0$ be an integer.
Then
\begin{center}
$\pd X\le n$ $\Longleftrightarrow$ $\syz^n X\in\Add R$ $\Longleftrightarrow$ $C^{\bullet}(\syz^n X)\in\proj R$ $\Longleftrightarrow$ $\syz^n C^{\bullet}(X)\in\proj R$ $\Longleftrightarrow$ $\pd C^{\bullet}(X)\le n$.
\end{center}
Here, the second equivalence follows from Theorem \ref{splitdef}, the third equivalence follows from Lemma \ref{Csyz}.
It follows from the above equivalences that the equality $\pd X=\sup\{\pd C^i(X)\mid i\in\ZZ\}$ holds.

(2) Since $B^{\bullet +1}(X)\approx\syz C^{\bullet +1}(X)$ by Remark \ref{boundry}(1), the module $B^{\bullet +1}(X)$ is Gorenstein projective by \cite[Lemma 3.10]{AB}.
As there exists an exact sequence $0\to \H^{\bullet}(X)\to C^{\bullet}(X)\to B^{\bullet +1}(X)\to0$, applying \cite[Lemma 3.10]{AB} again, we conclude that the module $\H^{\bullet}(X)$ is Gorenstein projective.

(3-a) This assertion is clear.

(3-b) Since $C^{\bullet}(Y)$ is a direct summand of $C^{\bullet}(X)$ in $\underline{\mod R}$ and $C^{\bullet}(X)$ is Gorenstein projective, $C^{\bullet}(Y)$ is also Gorenstein projective.

(3-c) Since $C^{\bullet}(X[n])=C^{\bullet +n}(X)$, the complex $X[n]$ is also Gorenstein projective.

(3-d) Since $C^{\bullet}(X)$ is Gorenstein projective, so are $\syz C^{\bullet}(X)$ and $\tr C^{\bullet}(X)$ by \cite[Lemma 3.10 and Proposition 3.8]{AB}.
By the isomorphisms $\syz C^{\bullet}(X)\approx C^{\bullet}(\syz X)$ and $\tr C^{\bullet}(X)\approx C^{-\bullet +1}(X^\ast)$, $\syz X$ and $X^\ast$ are Gorenstein projective.
Also, so is $\syz^{-}X$ as $\syz^{-}X=(\syz(X^\ast))^\ast$.

(3-e) Assume $X$ is Gorenstein projective.
Since $\Ext^i(\H^{\bullet}(X),R)=0=\Ext^i(\H^{\bullet}(Z),R)$ for all $i>0$ by the assertion (2), we have $\Ext^i(\H^{\bullet}(Y),R)=0$ for all $i>0$.
As the $R$-dual sequence $Z^\ast \xrightarrow{g^\ast} Y^\ast \xrightarrow{f^\ast} X^\ast \xrightarrow{h^\ast [1]}Z^\ast [1]$ is also a triangle in $\K(R)$, we obtain a commutative diagram with exact rows:
$$
\xymatrix@R-1pc@C-1pc{
&&\H^{-\bullet}(Z^\ast)\ar[rr]^-{\H^{-\bullet}(g^\ast)}\ar[dd]^-{\rho^{\bullet}_{Z}}&&\H^{-\bullet}(Y^\ast)\ar[rr]^-{\H^{-\bullet}(f^\ast)}\ar[dd]^-{\rho^{\bullet}_{Y}}&&\H^{-\bullet}(X^\ast)\ar[dd]^-{\rho^{\bullet}_{X}}\\
&&&&&&&&\\
0\ar[rr]&&\H^{\bullet}(Z)^\ast \ar[rr]^-{\H^{\bullet}(g)^\ast}&&\H^{\bullet}(Y)^\ast \ar[rr]^-{\H^{\bullet}(f)^\ast}&&\H^{\bullet}(X)^\ast\ar[rr]&&0.
}
$$
It follows from the injectivity of $\rho^{\bullet}_Z$ that $\H^{-\bullet}(g^\ast)$ is injective, and therefore $\H^{-\bullet}(f^\ast)$ is surjective.
Namely, the triangle $Z^\ast \xrightarrow{g^\ast} Y^\ast \xrightarrow{f^\ast} X^\ast \xrightarrow{h^\ast [1]} Z^\ast [1]$ is also cohomologically ghost triangle.
Applying the snake lemma to the above diagram, we have $\rho^{\bullet}_Y$ is bijective.
By the dual statement of Lemma \ref{ntrftriv}, the complex $Y^\ast$ is $\infty$-torsionfree.
Similarly, $Y$ is $\infty$-torsionfree since $Z^\ast \xrightarrow{g^\ast} Y^\ast \xrightarrow{f^\ast} X^\ast \xrightarrow{h^\ast [1]} Z^\ast [1]$ is also a cohomologically ghost triangle and the complexes $Z^\ast$ and $X^\ast$ are Gorenstein projective by the assertion (3-d).
Consequently, $Y$ is Gorenstein projective.

Conversely, assume that $Y$ is Gorenstein projective.
There exists a split complex $P\in\K(R)$ and a cohomologically ghost triangle $\syz Z\to X\oplus P\to Y\to \syz Z[1]$ from Lemma \ref{mawasu}.
It follows from the assertion (3-d) and the above argument that $X\oplus P$ is Gorenstein projective.
By the assertions (3-a) and (3-b), $X$ is also Gorenstein projective and we are done.
\end{proof}

For a finitely generated $R$-module $M$ and an integer $n\ge0$, the module $M$ has Gorenstein dimension at most $n$ if and only if the $n$th syzygy $\syz^n M$ is Gorenstein projective; see \cite[Theorem 3.13]{AB}.
A similar claim holds for objects of $\K(R)$.
It follows that the Gorenstein dimension of an object $X$ of $\K(R)$ can be computed from the Gorenstein dimensions of the $i$th cokernel $C^i(X)$ for $i\in\ZZ$.

\begin{thm}\label{Gdimca}
Let $X\in\K(R)$ and $n\ge0$ be an integer.
Then $\Gdim_R X\le n$ if and only if $\syz^n X$ is Gorenstein projective.
Also, the equality $\Gdim_R X=\sup\{\Gdim_R C^i(X) \mid i\in\ZZ\}$ holds.
\end{thm}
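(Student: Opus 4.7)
The plan is to prove both assertions by establishing, for each $n \ge 0$, the chain of equivalences
\[
\Gdim_R X \le n \quad \Longleftrightarrow \quad \syz^n X \text{ is Gorenstein projective} \quad \Longleftrightarrow \quad \sup_{i \in \ZZ} \Gdim_R C^i(X) \le n.
\]
Two of these links are almost immediate. For ``$\syz^n X$ Gorenstein projective implies $\Gdim_R X \le n$'', the iterated right $\Add R$-approximation triangles $\syz^{i+1} X \to P_i \to \syz^i X \to \syz^{i+1} X[1]$ (cohomologically ghost by Remark \ref{syzcos}(1), with $P_i \in \Add R$ Gorenstein projective by Proposition \ref{resol}(3-a)), together with the hypothesis at $i = n$, assemble into a Gorenstein resolution of $X$ of length $n$. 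For ``$\syz^n X$ Gorenstein projective if and only if $\sup_i \Gdim_R C^i(X) \le n$'', chain the definition of Gorenstein projectivity of complexes ($\syz^n X$ is Gorenstein projective iff every $C^i(\syz^n X)$ is a Gorenstein projective module), the iterated stable isomorphism $C^i(\syz^n X) \approx \syz^n C^i(X)$ coming from Lemma \ref{Csyz}, and the classical Auslander--Bridger characterization that for a finitely generated module $M$ one has $\syz^n M$ Gorenstein projective iff $\Gdim_R M \le n$ (\cite[Theorem 3.13]{AB}).

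Combining these yields $\sup_i \Gdim_R C^i(X) \le n \Rightarrow \syz^n X$ is Gorenstein projective $\Rightarrow \Gdim_R X \le n$, hence the one-sided bound $\Gdim_R X \le \sup_i \Gdim_R C^i(X)$. The main work, and the expected obstacle, is the reverse inequality: $\Gdim_R X \le n$ implies $\Gdim_R C^j(X) \le n$ for every $j$. I plan to prove this by induction on $n$. The base case $n = 0$ is just the definition of Gorenstein projectivity of complexes. For the inductive step, fix a cohomologically ghost triangle $X_1 \to G_0 \to X \to X_1[1]$ with $G_0$ Gorenstein projective and $\Gdim_R X_1 \le n-1$. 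Replacing $X$ by the mapping cone of $X_1 \to G_0$ up to isomorphism in $\K(R)$, I apply the snake lemma to the short exact sequence of complexes $0 \to G_0 \to X \to X_1[1] \to 0$; cohomological injectivity of $X_1 \to G_0$ controls the connecting homomorphism, and the snake sequence collapses into the four-term exact sequence
\[
0 \to \H^j(X_1) \to C^j(G_0) \to C^j(X) \to C^{j+1}(X_1) \to 0
\]
of finitely generated $R$-modules for each $j \in \ZZ$.

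By induction $\Gdim_R C^i(X_1) \le n-1$ for every $i$. Combining the short exact sequence $0 \to \H^j(X_1) \to C^j(X_1) \to B^{j+1}(X_1) \to 0$ with the stable isomorphism $B^{j+1}(X_1) \approx \syz C^{j+1}(X_1)$ of Remark \ref{boundry}(1) and the standard Auslander--Bridger bounds on Gorenstein dimension in short exact sequences from \cite{AB}, one deduces $\Gdim_R \H^j(X_1) \le n-1$. Splitting the displayed four-term sequence into two short exact sequences, using $\Gdim_R C^j(G_0) = 0$, and applying the same bounds twice then yields $\Gdim_R C^j(X) \le n$, completing the induction. With the three-way equivalence established, the characterization $\Gdim_R X \le n \Leftrightarrow \syz^n X$ is Gorenstein projective and the equality $\Gdim_R X = \sup_i \Gdim_R C^i(X)$ both follow directly.
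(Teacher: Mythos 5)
Your proposal is correct, but the key step---showing that $\Gdim_R X \le n$ forces $\syz^n X$ (equivalently each $C^j(X)$) to have the expected Gorenstein dimension---is handled by a genuinely different mechanism than in the paper. The paper stays entirely at the level of $\K(R)$: it rotates the defining triangle $X' \to G \to X \to X'[1]$ through syzygies by repeated application of Lemma \ref{mawasu}, obtaining a cohomologically ghost triangle $\syz^n X \to \syz^{n-1}X' \to \syz^{n-1}G \to \syz^n X[1]$ up to split summands, and then invokes the resolving-type properties of Gorenstein projective complexes (Proposition \ref{resol}(3-d),(3-e)) to conclude that $\syz^n X$ is Gorenstein projective; the equality with $\sup_i \Gdim_R C^i(X)$ is then read off from Lemma \ref{Csyz} exactly as in your link (B). You instead descend to $\mod R$: the snake lemma applied to the degreewise split exact sequence $0 \to G_0 \to \Cone(X_1 \to G_0) \to X_1[1] \to 0$, with cohomological ghostness identifying the image of the connecting map as $\H^j(X_1)$, yields the four-term exact sequence $0 \to \H^j(X_1) \to C^j(G_0) \to C^j(X) \to C^{j+1}(X_1) \to 0$, and the classical Auslander--Bridger bounds on $\Gdim$ in short exact sequences close the induction. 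Both arguments are sound. The paper's version is shorter because it reuses machinery already built; yours is more elementary and module-theoretic, at the cost of verifying the four-term sequence (essentially the module-level shadow of Lemma \ref{mawasu}) and of quietly using that $C^j(-)$ is invariant up to stable isomorphism under isomorphism in $\K(R)$ when you replace $X$ by the cone---a fact the paper also uses implicitly, e.g.\ in Lemma \ref{Csyz}.
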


\begin{proof}
If $\syz^n X$ is Gorenstein projective, then $X$ has Gorenstein dimension at most $n$ by the definition.
Let us prove that the converse holds by induction on $n$.
There is nothing to prove when $n=0$.
Assume $n>0$.
Since $\Gdim X\le n$, there exists a cohomologically ghost triangle $X^\prime \to G\to X\to X^\prime [1]$ in $\K(R)$ such that $G$ is Gorenstein projective and $\Gdim X^\prime \le n-1$.
Applying Lemma \ref{mawasu} to this triangle repeatedly, we obtain a cohomologically ghost triangle $S^n_{X}\to S^{n-1}_{X^\prime}\to S^{n-1}_{G}\to S^n_{X}[1]$ with $S^n_X\approx \syz^n X$, $S^{n-1}_{X^\prime}\approx \syz^{n-1} X^{\prime}$ and $S^{n-1}_G\approx \syz^{n-1}G$ in $\K(R)$.
The induction hypothesis implies that $\syz^{n-1}X^\prime$ is Gorenstein projective.
By Proposition \ref{resol}(3), $S^{n-1}_{X^\prime}$ and $S^{n-1}_G$ are Gorenstein projective, so is $S^n_X$, and so is $\syz^n X$.

For any integer $n\ge0$, the following equivalences hold by the above argument and Lemma \ref{Csyz}:
\begin{eqnarray*}
\Gdim X\le n\text{ }&\Longleftrightarrow& \text{ }\syz^n X\text{ is Gorenstein projective }
\Longleftrightarrow\text{ }C^{\bullet}(\syz^n X)\text{ is Gorenstein projective }\\
&\Longleftrightarrow&\text{ }\syz^n C^{\bullet}(X)\text{ is Gorenstein projective }\Longleftrightarrow\text{ }\Gdim C^{\bullet}(X)\le n.
\end{eqnarray*}
These conclude that the equality $\Gdim_R X=\sup\{\Gdim_R C^i(X) \mid i\in\ZZ\}$ holds.
\end{proof}


A finitely generated $R$-module $M$ is said to have {\em complete resolution} if there exists an acyclic complex of finitely generated projective $R$-modules 
$$
\cdots \to P_1\to P_0\to P_{-1} \to P_{-2}\to \cdots
$$
whose $R$-dual is also exact such that $M\cong \Cok(P_1\to P_0)$.
It is well-known that a finitely generated $R$-module $M$ has complete resolution if and only if $M$ is Gorenstein projective; see \cite[Chapter 12]{LW}.
Sather-Wagstaff, Sharif and White \cite{SSW} showed that a finitely generated $R$-module $M$ satisfying the following condition is also Gorenstein projective; there exists an acyclic an acyclic complex of (finitely generated) Gorenstein projective $R$-modules 
$$
\cdots \to G_1\to G_0\to G_{-1} \to G_{-2}\to \cdots
$$
whose $R$-dual is also exact such that $M\cong \Cok(P_1\to P_0)$.
From the following theorem, the same argument holds for Gorenstein projective complexes.

\begin{thm}\label{SWet}
Let $X\in \K(R)$.
Then the following are equivalent.
\begin{enumerate}[\rm(1)]
   \item
   The complex $X$ is Gorenstein projective.
   \item
   For each integer $t$, there exists a cohomologically ghost triangle $X_{t+1}\to P_t\to X_t\to X_{t+1}[1]$ in $\K(R)$ with $P_t\in\Add R$ whose $R$-dual $X_{t}^\ast\to P_t^\ast\to X_{t+1}^\ast\to X_{t}^\ast[1]$ is also cohomologically ghost, such that $X\cong X_0$ in $\K(R)$.
   \item
   For each integer $t$, there exists a cohomologically ghost triangle $X_{t+1}\to G_t\to X_t\to X_{t+1}[1]$ in $\K(R)$ with $G_t$ Gorenstein projective whose $R$-dual $X_{t}^\ast\to P_t^\ast\to X_{t+1}^\ast\to X_{t}^\ast[1]$ is also cohomologically ghost, such that $X\cong X_0$ in $\K(R)$.
\end{enumerate}
\end{thm}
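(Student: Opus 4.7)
The plan is to establish the cycle $(2) \Rightarrow (3) \Rightarrow (1) \Rightarrow (2)$. The implication $(2) \Rightarrow (3)$ is immediate: every split complex is Gorenstein projective by Proposition \ref{resol}(3-a), so take $G_t := P_t$. For $(1) \Rightarrow (2)$, I would iterate Proposition \ref{trleq}: since $X$ is Gorenstein projective it is in particular torsionless, so Proposition \ref{trleq} furnishes a cohomologically ghost triangle $X \to P_{-1} \to X_{-1}$ with $P_{-1} \in \Add R$ and cohomologically ghost $R$-dual, and $X_{-1}$ remains Gorenstein projective by Proposition \ref{resol}(3); iterating produces the triangles for $t < 0$, and applying the same argument to the Gorenstein projective complex $X^\ast$ and then dualizing yields the triangles for $t \ge 0$.

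The substantive direction is $(3) \Rightarrow (1)$, a complex-theoretic version of the Sather--Wagstaff--Sharif--White theorem. My first step is to pass to cohomology. The cohomologically ghost hypothesis on $X_{t+1} \to G_t \to X_t$ and its $R$-dual yield short exact sequences $0 \to \H^i(X_{t+1}) \to \H^i(G_t) \to \H^i(X_t) \to 0$ and $0 \to \H^{-i}(X_t^\ast) \to \H^{-i}(G_t^\ast) \to \H^{-i}(X_{t+1}^\ast) \to 0$ for each $i, t \in \ZZ$; splicing these produces an acyclic complex of Gorenstein projective modules (Gorenstein projectivity by Proposition \ref{resol}(2))
\[
\cdots \to \H^i(G_1) \to \H^i(G_0) \to \H^i(G_{-1}) \to \cdots
\]
whose $R$-dual is also acyclic. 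The module-level result \cite[Theorem A]{SSW} then forces $\H^i(X)$ and $\H^i(X^\ast)$ to be Gorenstein projective modules for every $i$, and in particular $\Ext^j_{R^{\mathrm{op}}}(\H^i(X), R) = 0 = \Ext^j_{R^{\mathrm{op}}}(\H^i(X^\ast), R)$ for all $j \ge 1$.

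My second step shows that $X$ is reflexive. Applying the snake lemma to the naturality squares of $\gamma^i$ for the triangles $X \to G_{-1} \to X_{-1}$ and $X_{-1} \to G_{-2} \to X_{-2}$ provided by (3)---in which each $\gamma^i_{G_t}$ is bijective (since $G_t$ is Gorenstein projective, hence $\infty$-torsionfree) and the bottom rows $0 \to \H^i(X^\ast)^\ast \to \H^i(G_{-1}^\ast)^\ast \to \H^i(X_{-1}^\ast)^\ast \to 0$ and its analog for $X_{-1}$ are short exact after $(-)^\ast$-dualizing the $R$-dual cohomologically ghost sequence thanks to the $\Ext$ vanishings from Step 1---the proof of Proposition \ref{refeq}$(2)\Rightarrow(1)$ adapts verbatim (with ``Gorenstein projective'' replacing ``split'') to conclude that $\gamma^i_X$ is bijective. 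The identical argument applied to the $R$-dualized family of triangles, which satisfies the hypothesis of (3) with $X^\ast$ in the role of $X_0$, yields that $\rho^i_X = \gamma^i_{X^\ast}$ is also bijective. Combining these bijectivities with the $\Ext$ vanishings via Lemma \ref{ntrftriv} (and its $R$-dual) gives that $X$ and $X^\ast$ are both $\infty$-torsionfree, whence $C^i(X)$ is a Gorenstein projective module for every $i$ and $X$ is Gorenstein projective.

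The main obstacle is the cohomological splicing in Step 1, whose success depends critically on the two-sided cohomologically ghost hypothesis: without the $R$-dual condition, neither the dual acyclicity needed to invoke \cite[Theorem A]{SSW} nor the exactness of the bottom rows of the subsequent snake-lemma squares would be available, so the two-sided cohomological ghost condition carries the essential weight of the argument.
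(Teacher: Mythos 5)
Your cycle $(2)\Rightarrow(3)\Rightarrow(1)\Rightarrow(2)$ is sound, and the directions $(2)\Rightarrow(3)$ and $(1)\Rightarrow(2)$ essentially coincide with the paper's argument (the paper takes the syzygy/cosyzygy triangles $\syz^{t+1}X\to F_t\to\syz^tX$ directly and checks dual ghostness via the $\rho$-naturality square, which is what iterating Proposition \ref{trleq} amounts to). The substantive difference is in $(3)\Rightarrow(1)$. The paper stays entirely at the level of the Auslander--Bridger long exact sequences: it first shows each $\rho^\bullet_{X_t}$ is bijective by the snake lemma, then kills $\Ext^{>0}(\H^\bullet(X_t^\ast),R)$ and $\Ext^{>0}(\H^\bullet(X_t),R)$ by dimension shifting along $t$, and finally descends to $\Ext^{>0}(C^\bullet(X_t),R)=0$ via Corollary \ref{fundex}. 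You instead splice the cohomology short exact sequences into acyclic complexes of Gorenstein projective modules and invoke the module-level theorem of Sather--Wagstaff--Sharif--White as a black box to get the vanishing $\Ext^{>0}(\H^\bullet(X),R)=\Ext^{>0}(\H^\bullet(X^\ast),R)=0$, then run the reflexivity argument of Proposition \ref{refeq} for $\gamma_X$ and $\rho_X$ and conclude with Lemma \ref{ntrftriv}. Your route is shorter and makes the analogy with \cite[Theorem A]{SSW} transparent; the paper's is self-contained and in fact reproves the module case (see Example \ref{ntrfex}(2)), so it does not rely on importing the module result.

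One step in your Step 1 is asserted but not justified and is exactly where care is needed: the claim that the $R$-dual of the spliced complex $T^i=(\cdots\to\H^i(G_1)\to\H^i(G_0)\to\H^i(G_{-1})\to\cdots)$ is acyclic. Dualizing the individual sequences $0\to\H^i(X_{t+1})\to\H^i(G_t)\to\H^i(X_t)\to0$ only gives left exactness, and the missing surjectivity of $\H^i(G_t)^\ast\to\H^i(X_{t+1})^\ast$ is a priori obstructed by $\Ext^1(\H^i(X_t),R)$, which you do not yet know to vanish. The correct justification is to splice the \emph{second} family of short exact sequences $0\to\H^{-i}(X_t^\ast)\to\H^{-i}(G_t^\ast)\to\H^{-i}(X_{t+1}^\ast)\to0$ into an acyclic complex $S^{-i}$ and to identify $S^{-i}\cong(T^i)^\ast$ termwise via the isomorphisms $\rho^i_{G_t}:\H^{-i}(G_t^\ast)\to\H^i(G_t)^\ast$ (bijective since $G_t$ is Gorenstein projective), checking compatibility of the differentials by naturality of $\rho$ applied to the composites $G_{t+1}\to X_{t+1}\to G_t$. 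With this made explicit your argument is complete.
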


\begin{proof}
We first prove the implication $(1)\Rightarrow(2)$.
For any integer $t$ we take the triangle $\syz^{t+1}X\xrightarrow{g_{t+1}} F_t\xrightarrow{f_t}\syz^t X\to\syz^{t+1}X[1]$.
Then $f_t$ is cohomologically surjective.
Also, for the $R$-dual triangle $(\syz^t X)^\ast\xrightarrow{f_t^\ast}F_t^\ast\xrightarrow{g_{t+1}^\ast}(\syz^t X)^\ast[1]$ the morphism $f_t^\ast$ is cohomologically injective.
In fact, the diagram
$$
\xymatrix@R-1pc@C-1pc{
\H^{-\bullet}((\syz^t X)^\ast)\ar[rr]^-{\H^{-\bullet}(f_t^\ast)}\ar[dd]^-{\rho^\bullet_{\syz^t X}}&&\H^{-\bullet}(F_t^\ast)\ar[dd]^-{\rho^\bullet_{F_t}}\\
&&\\
\H^{\bullet}(\syz^t X)^\ast\ar[rr]^-{\H^{\bullet}(f_t)^\ast}&&\H^{\bullet}(F_t)^\ast
}
$$
is commutative and the natural maps $\rho^{\bullet}_{\syz^t X}$ and $\rho^{\bullet}_{F_t}$ are bijective since $\syz^t X$ is Gorenstein projective.
As $\H^{\bullet}(f_t)^\ast$ is injective, so is $\H^{-\bullet}(f_t^\ast)$.
Namely, $f_t^\ast$ is cohomologically injective and we obtain the desired triangles.

The implication $(2)\Rightarrow(3)$ clearly holds.
Suppose that the condition (3) is satisfied.
By the assumption in (3), we have the commutative diagram
$$
\xymatrix@R-1pc@C-1pc{
0\ar[rr]&&\H^{-\bullet}(X_t^\ast)\ar[rr]\ar[dd]^-{\rho^{\bullet}_{X_t}}&&\H^{-\bullet}(G_t^\ast)\ar[rr]\ar[dd]^-{\rho^{\bullet}_{G_t}}&&\H^{-\bullet}(X_{t+1}^\ast)\ar[rr]\ar[dd]^-{\rho^{\bullet}_{X_{t+1}}}&&0\\
&&&&&&&&\\
0\ar[rr]&&\H^{\bullet}(X_t)^\ast\ar[rr]&&\H^{\bullet}(G_t)^\ast\ar[rr]&&\H^{\bullet}(X_{t+1})^\ast
}
$$
with exact rows for all integers $t\in\ZZ$.
Since $\rho^{\bullet}_{G_t}$ is bijective for all $t\in \ZZ$, so is $\rho^{\bullet}_{X_t}$ for all $t$.
That is, $\Ext^1(C^{\bullet}(X_t), R)=\Ext^2(C^{\bullet}(X_t),R)=0$ for all $t\in\ZZ$.
From the long exact sequence
$$
0\to \Ext^1(\H^{\bullet}(X_t),R)\to\Ext^1(\H^{\bullet}(G_t), R)\to\Ext^1(\H^{\bullet}(X_{t+1}),R)\to\Ext^2(\H^{\bullet}(X_t),R)\to\cdots
$$
and the vanishing $\Ext^{>0}(\H^{\bullet}(G_t),R)=0$, we have $\Ext^k(\H^{\bullet}(X_t, R))\cong\Ext^{k-1}(\H^{\bullet}(X_{t+1}), R)\cong\cdots\cong\Ext^1(\H^{\bullet}(X_{t+k-1}),R)=0$ for all $k>0$ and $t\in\ZZ$.
Therefore, by Theorem \ref{fundex}, we have $\Ext^k(C^{\bullet}(X_t), R)\cong\Ext^{k-1}(C^{\bullet-1}(X_t),R)\cong\cdots\cong\Ext^{2}(C^{\bullet-k+2}(X_t),R)=0$ for all $k>2$ and $t\in\ZZ$.
We conclude that $\Ext^{>0}(C^{\bullet}(X_t),R)=0$ for all $t\in\ZZ$.
Next, we should show that $\Ext^{>0}(C^{\bullet}(X_t^\ast),R)=0$.
But this can be proved similarly from the dual diagram
$$
\xymatrix@R-1pc@C-1pc{
0\ar[rr]&&\H^{-\bullet}(X_{t+1})\ar[rr]\ar[dd]^-{\gamma^{\bullet}_{X_t}}&&\H^{-\bullet}(G_t)\ar[rr]\ar[dd]^-{\gamma^{\bullet}_{G_t}}&&\H^{-\bullet}(X_{t})\ar[rr]\ar[dd]^-{\gamma^{\bullet}_{X_{t+1}}}&&0\\
&&&&&&&&\\
0\ar[rr]&&\H^{\bullet}(X_{t+1}^\ast)^\ast\ar[rr]&&\H^{\bullet}(G_t^\ast)^\ast\ar[rr]&&\H^i(X_{t}^\ast)^\ast.
}
$$
\end{proof}

By \cite[Theorem 7.11]{Yos}, the pair $(\syz^{-n}, \syz^n)$ of functors from $\K(R)$ to itself is a adjoint pair for eact $n>0$. 
Namely, there is a functorial isomorphism $\Phi_{X,Y}:\Hom_{\underline{\K(R)}}(X,\syz^n Y)\xrightarrow{\sim}\Hom_{\underline{\K(R)}}(\syz^{-n}X,Y)$ for all $X, Y\in\underline{\K(R)}$.
We denote by $\psi_X^n:\syz^{-n}\syz^n X\to X$ the {\em counit morphism} of an object $X$ of $\K(R)$, i.e., $\underline{\psi_X^n}=\Phi_{\syz^n X, X}(\underline{1_{\syz^n X}})$.
In \cite[Section 10]{Yos}, the behavior of the counit morphism is studied.

The question of when syzygy modules become $n$-torsionfree is well-studied in representation theory; see \cite{AB, AR1, AR2}.
For a finitely generated $R$-module $M$, the fundamental result \cite[Proposition 2.21]{AB} due to Auslander and Bridger asserts that $\syz^n M$ is $n$-torsionfree if and only if there exists an exact sequence $0\to Y\to G\to M\to 0$ of finitely generated $R$-modules such that $Y$ has projective dimension at most $n$ and $\Ext^i_{R}(G,R)=0$ for all $1\le i\le n$.
The analogy of this theorem is the following theorem, which is one of the main results of this paper.

\begin{thm}\label{ABapp}
Let $X\in \K(R)$ and $n$ be a positive integer.
Then $\syz^n X$ is $n$-torsionfree if and only if there exists a cohomologically ghost triangle $Y\to G\xrightarrow{\psi}X\to Y[1]$ in $\K(R)$ such that $G^\ast$ is $n$-torsionfree and $Y$ has projective dimension at most $n-1$.
\end{thm}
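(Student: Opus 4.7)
My plan is to mirror the argument of Auslander--Bridger \cite[Proposition 2.21]{AB} at the level of complexes, with Lemma \ref{mawasu} and Theorem \ref{ntrfeq} serving as the principal workhorses: the former propagates cohomologically ghost triangles through the syzygy functor, while the latter characterizes $n$-torsionfreeness via a tower of $n$ cohomologically ghost triangles whose $R$-duals are again ghost.

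For the sufficiency, suppose we are given the triangle $Y \to G \xrightarrow{\psi} X \to Y[1]$. I would iterate Lemma \ref{mawasu} $n$ times to build, up to summands in $\Add R$, a cohomologically ghost triangle $\syz^n Y \to \syz^n G \to \syz^n X \to (\syz^n Y)[1]$. Since $\pd Y \le n-1$ forces $\syz^{n-1}Y \in \Add R$, and thus $\syz^n Y \approx 0$, this yields a stable isomorphism $\syz^n X \approx \syz^n G$. It then remains to verify that $\syz^n G$ is $n$-torsionfree. Here the hypothesis that $G^\ast$ is $n$-torsionfree, fed through Theorem \ref{ntrfeq} and $R$-dualized, produces exactly the $n$ cohomologically ghost triangles (with dual also ghost) assembling $\syz^n G$ that Theorem \ref{ntrfeq}(2) demands.

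For the necessity, I would set $G := \syz^{-n}\syz^n X$ and take $\psi := \psi_X^n : G \to X$ to be the counit of Yoshino's adjunction $(\syz^{-n}, \syz^n)$. Completing to a triangle $Y \to G \xrightarrow{\psi} X \to Y[1]$ (and, if necessary, replacing $G$ by $G \oplus P$ with $P \in \Add R$ so that the triangle is cohomologically ghost), two things need checking. For $G^\ast$: use the identity $G^\ast \cong \syz^n((\syz^n X)^\ast)$; apply Theorem \ref{ntrfeq} to $\syz^n X$, take $R$-duals, then propagate the resulting tower of $n$ ghost triangles across $\syz^n$ via Lemma \ref{mawasu}, and invoke Theorem \ref{ntrfeq} once more to conclude. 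For the projective dimension bound, apply $\syz^n$ to the triangle and use that $\syz^n \psi_X^n$ is a stable isomorphism (the characteristic property of the counit), so that $\syz^n Y$ lies in $\Add R$.

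The main obstacle I anticipate is sharpening this last step from the immediate bound $\pd Y \le n$ to the required $\pd Y \le n-1$. This should come from a careful analysis of the counit: because $Y$ is obtained as a cocone (and hence as a shift within the triangulated structure), it should absorb one additional level of the syzygy functor, so that $\syz^{n-1}Y$, rather than merely $\syz^n Y$, already lies in $\Add R$. I expect this to require either an octahedral-axiom argument decomposing $Y$ further, or a direct computation at the level of cohomology via Corollary \ref{starex} to rule out any remaining obstruction in degree $n$.
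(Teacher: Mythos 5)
Your sufficiency argument follows the paper's route, but the step ``$\syz^n Y\approx 0$, hence $\syz^n X\approx\syz^n G$'' is not a valid deduction as written: a cohomologically ghost triangle $P\to A\to B\to P[1]$ with $P\in\Add R$ in the \emph{first} position need not split (already in $\mod R$, the sequence $0\to\syz M\to F\to M\to 0$ with $\pd M=1$ has projective first term, yet $F\not\approx M$). The repair is easy: stop the iteration of Lemma \ref{mawasu} at the stage $\syz^{n}X\to\syz^{n-1}Y\to\syz^{n-1}G\to\syz^{n}X[1]$, where the $\Add R$-object sits in the middle, and apply Lemma \ref{mawasu} once more so that Lemma \ref{speq}(3) forces the splitting and gives $\syz^nX\approx\syz^nG$. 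With that fixed, your derivation of the $n$-torsionfreeness of $\syz^nG$ from that of $G^\ast$ via Theorem \ref{ntrfeq} (dualize the tower and reindex) is correct, and likewise your treatment of $G^\ast=\syz^n((\syz^nX)^\ast)$ in the converse.

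The necessity direction, however, contains a genuine gap, and you have located it yourself: you obtain at best $\pd Y\le n$ and never prove $\pd Y\le n-1$. Moreover, the proposed workaround ``replace $G$ by $G\oplus P$ so that the triangle is cohomologically ghost'' is not available: if $\psi^n_X$ itself fails to be cohomologically surjective, the cocone of the modified map $(\psi,s):G\oplus P\to X$ is a different object over which you have no control, so both required properties of $Y$ are lost at once. The substance of the theorem is precisely that the counit \emph{is} cohomologically surjective and that its cocone is an iterated extension of exactly $n$ split complexes. The paper establishes this by induction on $i$: because $\syz^nX$ is $n$-torsionfree, each intermediate cosyzygy $\syz^{-i}\syz^nX$ with $0\le i\le n-1$ is torsionless, hence admits a cohomologically \emph{injective} left $\Add R$-approximation $q_i$ (Proposition \ref{trleq}); feeding $q_i$ together with a right $\Add R$-approximation of $\syz^{n-(i+1)}X$ into the nine lemma yields simultaneously a cohomologically ghost triangle $Y_i\to\syz^{-(i+1)}\syz^nX\to\syz^{n-(i+1)}X\to Y_i[1]$ and a ghost triangle $Y_{i-1}\to P_i\to Y_i\to Y_{i-1}[1]$ with $P_i$ split. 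After $n$ steps, $Y=Y_{n-1}$ is assembled from the $n$ split complexes $P_0,\dots,P_{n-1}$ by $n-1$ such extensions, which is exactly the bound $\pd Y\le n-1$, and the surjectivity of $\psi^n_X$ on cohomology falls out of the same diagram. None of this is recoverable from the adjunction formalism alone (``$\syz^n\psi^n_X$ is a stable isomorphism''), since that part of your sketch never invokes the $n$-torsionfreeness of $\syz^nX$ --- and the conclusion is false without it, as the other implication of the theorem shows.
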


\begin{proof}
We first assume that the triangle $Y\to G\xrightarrow{\psi}X\to Y[1]$ mentioned in the theorem exists.
By Lemma \ref{mawasu}, there exists a cohomologically ghost triangle $\syz^{n}X\to\syz^{n-1}Y\to\syz^{n-1}G\to\syz^{n}X[1]$ up to $\Add R$ summands.
As this triangle splits, we have $\syz^n X\approx\syz^n G$.
The complex $\syz^n X$ is $n$-torsionfree.

Next, we consider the converse.
We prove by induction on $i$ that for all $0\le i\le n-1$ there are cohomologically ghost triangles $Y_i\to\syz^{-(i+1)}\syz^n X\xrightarrow{\psi^{i+1, n}_X}\syz^{n-(i+1)}X\to Y_{i}[1]$ and $Y_{i-1}\to P_{i}\to Y_i\to Y_{i-1}[1]$, where $P_i$ is a split complex and $Y_{-1}=0$.
Then the cohomologically ghost triangle $Y_{n-1}\to\syz^{-n}\syz^n X\to X\to Y_{n-1}[1]$ satisfies that $Y_{n-1}$ has projective dimension at most $n-1$ and $(\syz^{-n}\syz^n X)^\ast$ is $n$-torsionfree.

First we deal with the case $i=0$.
We take a right $\Add R$-approximation $p_0:Q_0\to\syz^{n-1}X$ of $\syz^{n-1}X$.
Since $\syz^n X$ is torsionless, we can take a left $\Add R$-approximation $q_0:\syz^{n}X\to P_0$ of $\syz^n X$ so that it is cohomologically injective.
Then, as the morphism $\binom{q_0}{0}:\syz^n X\to P_0\oplus Q_0$ is also a left $\Add R$-approximation of $\syz^n X$ and is cohomologically injective, the mapping cone of $\binom{q_0}{0}$ is isomorphic to the cosyzygy $\syz^{-1}\syz^n X$ in $\K(R)$ up to $\Add R$ summands.
We have the octahedron diagram
$$
\xymatrix@R-1pc@C-1pc{
\syz^n X\ar@{=}[dd]\ar[rr]^-{\binom{q_0}{0}}&&P_0\oplus Q_0\ar[dd]^-{(0, 1)}\ar[rr]&& \syz^{-1}\syz^n X \ar[rr]\ar[dd]&&\syz^n X[1]\ar@{=}[dd]\\
&&&\\
\syz^n X\ar[rr]&&Q_0\ar[dd]^-{0}\ar[rr]^-{p_0}&&\syz^{n-1}X\ar[dd]\ar[rr]&&\syz^n X[1]\\
&&&\\
&&P_0[1]\ar[dd]\ar@{=}[rr]&&P_0[1]\ar[dd]&&\\
&&&\\
&&(P_0\oplus Q_0)[1]\ar[rr]&&\syz^{-1}\syz^n X[1],&&
}
$$
Put $Y_{-1}=0$ and $Y_0=P_0$.
We obtain the desired result.

We assume that $i$ is positive and there exists a cohomologically ghost triangle $Y_{i-1}\to\syz^{-i}\syz^n X\xrightarrow{\psi^{i, n}_X}\syz^{n-i}X\to Y_{i-1}[1]$.
Take a right $\Add R$-approximation $p_{i}:Q_i\to\syz^{n-(i+1)}X$.
Then there exists a cohomologically ghost triangle $\syz^{n-i}X\xrightarrow{r_i}Q_i\xrightarrow{p_i}\syz^{n-(i+1)}X\to\syz^{n-i}X[1]$.
By the assumption, the cosyzygy $\syz^{-i}\syz^{n}X$ is torsionless.
We can take a left $\Add R$-approximation $q_i:\syz^{-i}\syz^n X\to P_i$ of $\syz^{-i}\syz^nX$ so that it is cohomologically injective. 
Applying the nine lemma \cite[Exercise 10.2.6]{Wei}, we have the diagram whose rows and columns are triangles and whose squares commute except the lower right square for all $1\le i\le n-1$:
$$
\xymatrix@R-1pc@C-1pc{
Y_{i-1}\ar[rr]\ar[dd]&&P_i\ar[rr]\ar[dd]^-{\binom{1}{0}}&&Y_i\ar[dd]\ar[rr]&&Y_{i-1}[1]\ar[dd]\\
&&&&\\
\syz^{-i}\syz^n X\ar[rr]^-{\binom{q_i}{0}}\ar[dd]^-{\psi^{i, n}_X}&&P_i\oplus Q_i\ar[rr]\ar[dd]^-{(0, 1)}&&\syz^{-(i+1)}\syz^n X\ar[dd]^-{\psi^{i+1, n}_X}\ar[rr]&&\syz^{-i}\syz^n X\ar[dd]\\
&&&&\\
\syz^{n-i}X\ar[rr]^-{r_i}\ar[dd]&&Q_i\ar[rr]^-{p^\prime_i}\ar[dd]&&\syz^{n-(i+1)}X\ar[dd]\ar[rr]&&\syz^{n-i}X[1]\ar[dd]\\
&&&&\\
Y_{i-1}[1]\ar[rr]&&P_i[1]\ar[rr]&&Y_i[1]\ar[rr]&&Y_{i-1}[2].
}
$$

In the above diagram, since the morphisms $(0, 1)$ and $p^\prime_i$ are cohomologically surjective, so is $\psi^{i+1, n}_X$.
Similarly, the morphism $Y_{i-1}\to P_i$ is cohomologically injective.
Two cohomologically ghost triangles $Y_i\to\syz^{-(i+1)}\syz^n X\xrightarrow{\psi^{i+1, n}_X}\syz^{n-(i+1)}X\to Y_{i}[1]$ and $Y_{i-1}\to P_{i}\to Y_i\to Y_{i-1}[1]$ are obtained and we are done.
\end{proof}

We obtain the following corollary, which is an analogy of a consequence of Auslander-Buchweitz theory \cite{ABu}.

\begin{cor}\label{AusBuchapp}
Let $X\in\K(R)$. Then $\Gdim_R X<\infty$ if and only if there exists a cohomologically ghost triangle $Y\to G\to X\to Y[1]$ in $\K(R)$ such that $\pd_R Y<\infty$ and $G$ is Gorenstein projective.
\end{cor}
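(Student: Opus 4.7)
The proof combines Theorems~\ref{ABapp} and~\ref{Gdimca} for the ``only if'' direction, and iterates Lemma~\ref{mawasu} for the ``if'' direction.

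Suppose first that $\Gdim_R X = n < \infty$. By Theorem~\ref{Gdimca}, $\syz^n X$ is Gorenstein projective, hence in particular $n$-torsionfree. Applying Theorem~\ref{ABapp} furnishes a cohomologically ghost triangle $Y \to G \to X \to Y[1]$ with $\pd_R Y \le n-1$, where the construction in the proof of Theorem~\ref{ABapp} exhibits $G$ as the iterated cosyzygy $\syz^{-n}\syz^n X$. Since $\syz^n X$ is Gorenstein projective, iterated application of Proposition~\ref{resol}(3-d) shows that $G = \syz^{-n}\syz^n X$ is Gorenstein projective, as required.

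For the converse, suppose we are given a cohomologically ghost triangle $Y \to G \to X \to Y[1]$ in $\K(R)$ with $m := \pd_R Y < \infty$ and $G$ Gorenstein projective. The plan is to produce a cohomologically ghost triangle
$$
\syz^m Y \to \syz^m G \to \syz^m X \to \syz^m Y[1]
$$
in $\underline{\K(R)}$. Since $\pd_R Y \le m$ is equivalent to $\syz^m Y \in \Add R$ (by the remark after Definition~\ref{defGdim}), this triangle collapses to a stable isomorphism $\syz^m X \approx \syz^m G$ in $\underline{\K(R)}$. Because $G$ is Gorenstein projective, Proposition~\ref{resol}(3-d) ensures that $\syz^m G$ is Gorenstein projective, and therefore so is $\syz^m X$. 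Theorem~\ref{Gdimca} then yields $\Gdim_R X \le m < \infty$.

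The main technical step is the propagation of the cohomologically ghost property through the syzygy functor. Lemma~\ref{mawasu}, when applied to a cohomologically ghost triangle $A \to B \to C \to A[1]$, produces the triangle $\syz C \to A \oplus P \to B \to \syz C[1]$ rather than the componentwise syzygy triangle $\syz A \to \syz B \to \syz C \to \syz A[1]$. To reach the latter form, one applies Lemma~\ref{mawasu} three times in succession, once for each cyclic rotation of the triangle, and absorbs the resulting $\Add R$-summands upon passage to $\underline{\K(R)}$; this mirrors the bookkeeping carried out in the proof of Theorem~\ref{SWet}. Iterating this construction $m$ times yields the desired syzygy triangle and completes the argument.
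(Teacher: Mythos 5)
Your ``only if'' direction is correct but takes a different route from the paper's: you extract $G=\syz^{-n}\syz^n X$ from the \emph{proof} of Theorem \ref{ABapp} and apply Proposition \ref{resol}(3-d) repeatedly, whereas the paper uses only the \emph{statement} of Theorem \ref{ABapp} (which gives merely that $G^\ast$ is $n$-torsionfree) and then upgrades this to Gorenstein projectivity of $G$ via the fact that a complex of Gorenstein dimension at most $n$ whose cokernels satisfy $\Ext^i(C^\bullet(G),R)=0$ for $1\le i\le n$ is Gorenstein projective. Both work; yours is shorter but leans on the internal construction rather than the stated conclusion of Theorem \ref{ABapp}.

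The ``if'' direction contains a genuine gap. After producing the cohomologically ghost triangle $\syz^m Y\to\syz^m G\to\syz^m X\to\syz^m Y[1]$ with $\syz^m Y\in\Add R$, you assert that it ``collapses to a stable isomorphism $\syz^m X\approx\syz^m G$.'' This does not follow: Lemma \ref{speq}(3) splits a cohomologically ghost triangle whose \emph{third} term is split, not whose first term is. The connecting morphism $w:\syz^m X\to\syz^m Y[1]$ does induce zero on all cohomologies, but since its \emph{target} (not its source) lies in $\Add R$, Theorem \ref{splitdef}(6) does not force $w=0$; one would need $\rho^\bullet_{\syz^m X}$ to be injective, i.e.\ $\syz^m X$ to be ${}^\ast$torsionfree, which is essentially what is being proved. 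The module analogue makes the failure plain: a short exact sequence $0\to P\to B\to C\to0$ with $P$ projective need not split, so $B\not\approx C$ in general; only after applying one more syzygy does one get $\syz B\approx\syz C$. The repair is easy: one further application of Lemma \ref{mawasu} produces a cohomologically ghost triangle $\syz^{m+1}X\to\syz^m Y\oplus P\to\syz^m G\to\syz^{m+1}X[1]$ whose middle term is a right $\Add R$-approximation of $\syz^m G$, whence $\syz^{m+1}X\approx\syz^{m+1}G$, and Proposition \ref{resol}(3-d) with Theorem \ref{Gdimca} gives $\Gdim_R X\le m+1<\infty$. Note also that all of this machinery is unnecessary for this direction: the given triangle, concatenated with a finite $\Add R$-resolution of $Y$ (split complexes being Gorenstein projective by Proposition \ref{resol}(3-a)), already witnesses $\Gdim_R X\le\pd_R Y+1$ directly from Definition \ref{defGdim}(3), which is all the paper invokes.
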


\begin{proof}
Assume that the complex $X$ has finite Gorenstein dimension.
Put $n=\Gdim X$.
Thanks to Theorem \ref{Gdimca}, the $n$th syzygy $\syz^n X$ is Gorenstein projective, so it is $n$-torsionfree.
By Theorem \ref{ABapp}, there exists a cohomologically ghost triangle $Y\to G\to X\to Y[1]$ such that $\pd Y<n$ and $G^\ast$ is $n$-torsionfree.
Since $\Ext^{i}(C^{\bullet}(X), R)=0$ for all integers $1\le i\le n$, the fact that $\Gdim C^{\bullet}(X)\le n$ forces us to have $\Gdim C^{\bullet}(X)\le0$.
Thus $G$ is Gorenstein projective as desired.
The reverse implication follows from Theorem \ref{Gdimca}.
\end{proof}

We give some examples.
We see that our results recover known results for finitely generated $R$-modules.

\begin{ex}\label{ntrfex}
Let $M$ be a finitely generated $R$-module and take a finite projective presentation $P_1\to P_0\to M\to0$ of $M$.
We consider the complex $X_M=(\cdots\to 0\to P_1\xrightarrow{d^1_X}P_0\to 0\to \cdots)\in\K(R)$.
We note that the complex $X_M$ is $n$-torsionfree (resp. Gorenstein projective) if and only if the module $M$ is $n$-torsionfree (resp. Gorenstein projective).
Also, the equality $\Gdim X_M=\Gdim M$ holds by Theorem \ref{Gdimca}.
\begin{enumerate}[\rm(1)]
   \item 
   We recover \cite[Theorem 2.17 (a) $\Rightarrow$ (b)]{AB} from Theorem \ref{ntrfeq}.
   Let $n$ be an integer.
   Suppose that $M$ is $n$-torsionfree.
   Since $X_M$ is $n$-torsionfree, there exist cohomologically ghost triangles $X_i\to P_i\to X_{i+1}\to X_i[1]$ for $0\le i\le n-1$ such that $X_M\cong X_0$ in $\K(R)$ and the $R$-dual $X_{i+1}^\ast \to P_i^\ast \to X_i^\ast \to X_{i+1}^\ast[1]$ is also cohomologically ghost for all $0\le i\le n-1$ by Theorem \ref{ntrfeq}.
   Taking the $0$th cohomology, we obtain the exact sequence 
   $$
   0\to M=\H^0(X_M)\to \H^0(P_0)\to \H^0(P_1)\to\cdots\to \H^0(P_{n-1}),
   $$
   where $\H^0(P_i)$ is projective for all $0\le i\le n-1$.
   Since the $R$-dual $X_{i+1}^\ast \to P_i^\ast \to X_i^\ast \to X_{i+1}^\ast[1]$ is also cohomologically ghost and $P_i\in\Add R$ for all $0\le i\le n-1$, there exists an exact sequence 
   $$
   \H^0(P_{n-1})^\ast \to \cdots \to \H^0(P_1)^\ast \to \H^0(P_0)^\ast \to M^\ast \to 0.
   $$
   So the implication $(a)\Rightarrow(b)$ of \cite[Theorem 2.17]{AB} is obtained.
   \item 
   We prove that Gorenstein projective modules have a complete resolution.
   Suppose that $M$ is Gorenstein projective.
   Since $X_M$ is Gorenstein projective, by Theorem \ref{SWet}, there exist cohomologically ghost triangles $X_{t+1}\to P_t\to X_t\to X_{t+1}[1]$ with $P_t\in\Add R$ for all integers $t$ such that $X_M\cong X_0$ in $\K(R)$ and the $R$-dual triangle $X_t^\ast \to P_t^\ast \to X_{t+1}^\ast \to X_t^\ast[1]$ is also cohomologically ghost for all integers $t$.
   Then there exists an exact sequence
   $$
   \cdots\to \H^0(P_1)\to \H^0(P_0)\to \H^0(P_{-1})\to \H^0(P_{-2})\to\cdots
   $$
   such that $\Cok(\H^0(P_1)\to \H^0(P_0))\cong \H^0(X_M)\cong M$  and the $R$-dual
   $$
   \cdots\to \H^0(P_{-2})^\ast \to \H^0(P_{-1})^\ast \to \H^0(P_0)^\ast \to \H^0(P_1)^\ast \to\cdots
   $$
   is also exact.
   A complete resolution of $M$ is obtained.
   \item 
   Using Corollary \ref{AusBuchapp}, we prove that modules of finite Gorenstein dimension have the Auslander--Buchweitz approximation.
   Suppose that $M$ has finite Gorenstein dimension.
   Since $X_M$ has finite Gorenstein dimension, there exists a cohomologically ghost triangle $Y\to G\to X_M\to Y[1]$ in $\K(R)$ such that $Y$ has finite projective dimension and $G$ is Gorenstein projective.
   Taking the $0$th cohomology, we obtain the exact sequence
   $$
   0\to \H^0(Y)\to \H^0(G)\to M\to 0.
   $$
   Then the module $\H^0(Y)$ has finite projective dimension and $\H^0(M)$ is a Gorenstein projective module by Proposition \ref{resol}.
   So the above exact sequence is an Auslander--Buchweitz approximation of the module $M$.
\end{enumerate}
\end{ex}

Let $M$ be a finitely generated $R$-module.
Take two projective resolutions $\cdots\to P_1\to P_0\to0$ of $M$ with $P_i\in\proj R$ and $\cdots \to Q_2\to Q_1\to 0$ of $M^\ast$ with $Q_j\in\proj R^{\mathrm{op}}$.
Put $P_{-i}=Q_{i}^\ast$ for all $i>0$.
Then we have a complex
$$
F_M=(\cdots\to P_1\to P_0\to P_{-1} \to P_{-2} \to \cdots) \in \K(R).
$$
Kato \cite{KK2, KK} proved that the mapping $M\mapsto F_M$ gives a fully faithful functor $\underline{\mod R}\to \K(R)$.
We can calculate the Gorenstein dimension of $F_M$.
\begin{ex}
Under the above notation, we have
\begin{itemize}
   \item 
   $C^i(F_M)=\Cok(P_{i+1}\to P_{i})\cong \syz^i M$ for all $i\ge0$.
   \item 
   $C^{-i-1}(F_M)=\Cok(P_{-i}\to P_{-i-1})\cong \tr\syz^{i+1}\tr M =\syz^{-(i+1)}M$ for all $i\ge0$.
\end{itemize}
Note that $\Gdim \syz^i M\le\Gdim M$ for all integers $i\ge0$.
So the equality $\Gdim F_M=\sup\{\Gdim \syz^{-i}M \mid i\ge0\}$ holds by Theorem \ref{Gdimca}.
\end{ex}

\begin{ac}
The author is grateful to his supervisor Ryo Takahashi for his many valuable discussions and suggestions.
The author also thanks Osamu Iyama and Yuji Yoshino for their useful comments.
\end{ac}

\end{document}